\newcommand{\Q}{\ensuremath{\mathbb{Q}}}
\newcommand{\Z}{\ensuremath{\mathbb{Z}}}
\newcommand{\F}{\ensuremath{\mathbb{F}}}
\newcommand{\Fr}{\ensuremath{\mathbf{F}}}
\newcommand{\ka}{\ensuremath{\Bbbk}}
\newcommand{\kka}{\ensuremath{\overline{\Bbbk}}}
\newcommand{\XX}{{\ensuremath{\overline{X}}}}
\newcommand{\Pro}{\ensuremath{\mathbb{P}}}
\newcommand{\Aut}{\ensuremath{\operatorname{Aut}}}
\newcommand{\Gal}{\ensuremath{\operatorname{Gal}}}
\newcommand{\Pic}{\ensuremath{\operatorname{Pic}}}
\newcommand{\ord}{\ensuremath{\operatorname{ord}}}
\newtheorem{theorem}[equation]{Theorem}
\newtheorem{proposition}[equation]{Proposition}
\newtheorem{lemma}[equation]{Lemma}
\newtheorem{corollary}[equation]{Corollary}
\theoremstyle{definition}
\newtheorem{example}[equation]{Example}
\newtheorem{definition}[equation]{Definition}
\theoremstyle{remark}
\newtheorem{remark}[equation]{Remark}
\title{Minimal del Pezzo surfaces of degree $2$ over finite fields}
\thanks{The research was carried out at the IITP RAS at the expense of the Russian Foundation for Sciences (project $N^{\underline{o}}$ 14-50-00150).}
\author{Andrey Trepalin}
\address{\emph{Andrey Trepalin}
\newline
\textnormal{Institute for Information Transmission Problems, 19 Bolshoy Karetnyi side-str., Moscow 127994, Russia}
\newline
\textnormal{\texttt{trepalin@mccme.ru}}}
\begin{document}

\begin{abstract}
Let $X$ be a minimal del Pezzo surface of degree $2$ over a finite field $\F_q$. The image $\Gamma$ of the Galois group $\Gal(\overline{\F}_q / \F_q)$ in the group $\Aut(\Pic(\XX))$ is a cyclic subgroup of the Weyl group $W(E_7)$. There are $60$ conjugacy classes of cyclic subgroups in $W(E_7)$ and $18$ of them correspond to minimal del Pezzo surfaces. In this paper we study which possibilities of these subgroups for minimal del Pezzo surfaces of degree $2$ can be achieved for given $q$.
\end{abstract}

\maketitle
\section{Introduction}

Let $X$ be a del Pezzo surface of degree $d$ over a finite field $\F_q$, and $\XX = X \otimes \overline{\F}_q$. The image $\Gamma$ of the Galois group $\Gal(\overline{\F}_q / \F_q)$ in the group $\Aut(\Pic(\XX))$ is a cyclic group, which preserves the intersection form. There are finitely many conjugacy classes of cyclic subgroups in the subgroup $\Aut(\Pic(\XX))$ preserving the intersection form. The natural question is which of these classes can realise the group $\Gamma$ for given $q$.

A surface $S$ is called \textit{minimal} if any birational morphism $S \rightarrow S'$ is an isomorphism. The minimality of $X$ can be described in terms of $\Gamma$-action on $\Pic(\XX)$. If $X$ is not a minimal surface then it is isomorphic to a blowup of surface $Y$ at number of points. In this case the action of the group $\Gal(\overline{\F}_q / \F_q)$ on $\Pic(\XX)$ is prescribed by the action of $\Gal(\overline{\F}_q / \F_q)$ on $\Pic(\overline{Y})$ and the degrees of the points of blowup. Therefore the cases of $\Gamma$ for which $X$ is minimal are most interesting for us.

If $X$ is a minimal geometrically rational surface then either $X$ admits a conic bundle structure or $X$ is a del Pezzo surface with the Picard number $\rho(X) = \operatorname{rk} \Pic(X) = 1$ (see \cite[Theorem 1]{Isk79}). In the paper \cite{Ry05} it is shown how the group $\Gamma$ can act on the components of singular fibres of a minimal conic bundle, and for all possibilities of $\Gamma$ corresponding minimal conic bundles are constructed. Del Pezzo surfaces of degree greater than~$4$ are $\F_q$-rational (see {\cite[Chapter 4]{Isk96}}). Therefore minimal del Pezzo surfaces of degree greater than $4$ can be constructed by blowing up some points on $\Pro^2_{\F_q}$ and contracting some exceptional curves. All types of minimal del Pezzo surfaces of degree $4$ are constructed in \cite[Theorem~3.2]{Ry05}. One case of minimal cubic surfaces is constructed in \cite{SD10} for any $q$. The other cases of minimal cubic surfaces are constructed in the paper \cite{RT16} but there are some restrictions on $q$. In the paper \cite{BFL16} for del Pezzo surfaces of $3$, $2$ and $1$ and any~$q$ it is shown how many $\F_q$-points can a surface have. Some results of the paper \cite{BFL16} give constructions of minimal surfaces for certain $\Gamma$. Also it is shown that for any $\Gamma$ there exists the corresponding surface for any sufficiently big $q$ (see \cite[Theorem~1.7]{BFL16}). 

The aim of this paper is to construct minimal del Pezzo surfaces of degree $2$ with given cyclic group $\Gamma$. Note that in this case the group of automorphisms of $\Pic(\XX)$, preserving the intersection form, is the Weyl group $W(E_7)$. The conjugacy classes of elements in this group are well-known (see \cite{Car72}). For convenience of the reader we give a table of these conjugacy classes and some of their properties in Appendix A. We have $18$ conjugacy classes in $W(E_7)$ for which $X$ is minimal. For $6$ of those classes the invariant Picard number $\rho(\XX)^{\Gamma} = 2$ and $X$ admits a conic bundle structure. For the other $12$ conjugacy classes $\rho(\XX)^{\Gamma} = 1$ and $X$ does not admit a structure of a conic bundle.

The considered problem is closely related to zeta-functions. Let $N_d$ be the order of the set $X(\F_{q^d})$. The zeta-function of $X$ is the formal power series
$$
Z_X(t)=\exp\left(\sum_{d=1}^\infty \frac{N_dt^d}{d}\right).
$$
For a rational surface $X$ one has (see~\cite[IV.5]{Man74})
$$
Z_X(t)=\frac{1}{(1-t)P(t)(1-q^2t)}
$$
\noindent where
$$
P(t)=\det(1-qt\Fr|\Pic(\XX)\otimes\Q),
$$
\noindent and $\Fr$ is a linear automorphism of $\Pic(\XX)\otimes\Q$ induced by the Frobenius element. Therefore the zeta-function of a surface $X$ is totally defined by the group $\Gamma$. Moreover, for each cyclic subgroup of $W(E_7)$ we can write down such function. But it is not known whether a given zeta-function corresponding to a subgroup of $W(E_7)$ can be realised by a del Pezzo surface of degree $2$.

This paper gives an answer for this question for minimal del Pezzo surfaces of degree~$2$. In the notation of Table \ref{table1} these surfaces have types $31$, $35$, $40$, $43$--$45$ and $49$--$60$. The main result of this paper is the following.

\begin{theorem}
\label{main}
In the notation of Table \ref{table1} the following holds.

\begin{enumerate}
\item A del Pezzo surface of degree $2$ of type $49$ does not exist for $\F_2$, $\F_3$, $\F_4$, $\F_5$, $\F_7$, $\F_8$, and exists for the other finite fields.

\item A del Pezzo surface of degree $2$ of type $31$ does not exist for $\F_2$, $\F_3$, $\F_4$, and exists for the other finite fields.

\item Del Pezzo surfaces of degree $2$ of types $40$, $50$, $53$, $55$, $60$ do not exist for $\F_2$, and exist for the other finite fields.

\item Del Pezzo surfaces of degree $2$ of types $43$, $44$, $45$, $52$, $54$, $57$, $59$ exist for all finite fields.

\item A del Pezzo surface of degree $2$ of type $35$ does not exist for $\F_2$, and exists for any~$\F_q$ where $q \geqslant 4$.

\item Del Pezzo surfaces of degree $2$ of types $51$, $58$ exist for any $\F_q$ where $q$ is odd.

\item A del Pezzo surface of degree $2$ of type $56$ exists for any $\F_q$ where $q = 6k + 1$.

\end{enumerate}

\end{theorem}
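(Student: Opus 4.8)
The plan is to treat the seven parts of Theorem~\ref{main} as a single uniform bookkeeping problem: for each of the $18$ minimal conjugacy classes in $W(E_7)$ (listed in Table~\ref{table1} by type), one must either \emph{construct} a del Pezzo surface $X/\F_q$ of degree $2$ whose Frobenius acts on $\Pic(\XX)$ as a representative of that class, or \emph{prove non-existence} over the small fields $\F_2,\F_3,\F_4,\F_5,\F_7,\F_8$ where constructions fail. The natural strategy is to exploit the anticanonical model: a del Pezzo surface of degree $2$ is a double cover $\pi\colon X\to\Pro^2$ branched over a smooth plane quartic $B$, so constructing $X$ over $\F_q$ amounts to choosing $B$ (a quartic form in three variables over $\F_q$) together with the sign data of the cover so that the induced Galois action on the $56$ lines (equivalently, on the $28$ bitangents of $B$ together with the Geiser involution) realises the prescribed conjugacy class in $W(E_7)$. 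I would organise the argument around the ADE/root-system combinatorics: each minimal type determines a characteristic polynomial $P(t)$ and hence a required factorisation pattern of Frobenius, and one translates this into how Frobenius must permute the exceptional curves, the $126$ roots, and the $\Gamma$-orbits on them.

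Next I would split into the two geometric cases already flagged in the excerpt. For the $6$ types with $\rho(\XX)^\Gamma = 2$ (so $X$ is a minimal conic bundle of degree $2$), I would build $X$ as a conic bundle over $\Pro^1_{\F_q}$ with the appropriate number and degree of singular fibres, invoking the analysis of \cite{Ry05} on how $\Gamma$ acts on components of singular fibres; here the constraint is purely on the splitting field of the discriminant, and one realises the required cyclic $\Gamma$ by choosing the discriminant polynomial to have a prescribed factorisation type over $\F_q$, which is possible as soon as $q$ is large enough to supply irreducible factors of the needed degrees (this is where the $q\geqslant 4$ in part~(5), and odd-$q$ in part~(6), and $q\equiv 1\pmod 6$ in part~(7) enter — they are exactly the divisibility/existence conditions for the relevant cyclotomic or cubic extensions and for avoiding characteristic-$2$ degeneracies of conics). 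For the $12$ types with $\rho(\XX)^\Gamma = 1$, I would instead start from a known minimal surface of higher degree (a minimal del Pezzo of degree $3$ or $4$, or $\Pro^2$ with a suitable point configuration) and blow up a $\Gamma$-stable set of points of the correct degrees, then contract, or — more robustly — write down an explicit quartic $B$ whose $28$ bitangents are permuted by $\mathrm{Frob}_q$ with the prescribed cycle type; the characteristic polynomial $P(t)$ and the orbit structure give a finite checklist that a candidate $B$ must satisfy, which can be verified by a direct (if tedious) computation of bitangents.

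The non-existence statements are the delicate half. For the excluded small fields I would argue by counting: the zeta-function formula $Z_X(t) = \bigl((1-t)P(t)(1-q^2t)\bigr)^{-1}$ fixes $N_1 = \#X(\F_q)$, and more importantly $N_d = \#X(\F_{q^d})$ for all $d$, in terms of the eigenvalues $q\zeta$ of $q\,\Fr$. The Weil/Hodge-type inequalities and, crucially, the non-negativity and combinatorial constraints coming from the geometry of the double cover (e.g. the number of $\F_q$-rational bitangents, the existence of $\F_q$-points on the branch quartic, and the fact that a smooth plane quartic over a very small field cannot have certain bitangent-splitting types — a point one can make precise by bounding the number of $\F_q$-rational lines and conics, or by the Weil bound on the quartic curve $B$ itself, $\#B(\F_q) \leqslant q+1+2\cdot 3\sqrt q$) will rule out the offending conjugacy classes over $\F_2,\dots,\F_8$. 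I would go type by type, and for each forbidden pair $(\text{type},\F_q)$ exhibit the specific numerical obstruction — typically that the required $P(t)$ forces $N_d < 0$ for some small $d$, or forces more rational bitangents (resp.\ more or fewer $\F_q$-points on $B$) than a smooth plane quartic over $\F_q$ can have. The main obstacle, and the part I expect to be most technical, is precisely this case-by-case non-existence analysis over the small fields: the Weil bound alone is too weak for several of the listed exclusions (e.g.\ type $49$ over $\F_7$ and $\F_8$), so one must supplement it with finer information — the geometry of bitangent configurations of plane quartics over small finite fields, or an exhaustive/structural search — and then confirm that the obstruction disappears for all larger $q$, so that the existence constructions of the previous paragraphs indeed cover every remaining field.
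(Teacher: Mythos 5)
There are two genuine gaps in your plan, and together they miss the paper's central mechanisms. First, for the twelve types with $\rho(\XX)^{\Gamma}=1$ you have no actual construction: blowing up a $\Gamma$-stable set of points can never yield a minimal surface of Picard number $1$, and ``write down an explicit quartic whose $28$ bitangents are permuted with the prescribed cycle type'' is a wish, not a method. The paper's key idea here is the \emph{Geiser twist} (Definition~\ref{GeiserTwistDef}, via the twisting result of Proposition~\ref{dPtwist}): composing Frobenius with the Geiser involution gives an existence \emph{equivalence} between a minimal type with $\rho=1$ and a specific non-minimal type (Proposition~\ref{GeiserTwist}), and the latter \emph{is} a blowup of $\Pro^2$ or of a higher-degree del Pezzo at points of prescribed degrees in general position. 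This equivalence is also what powers the non-existence half: e.g.\ type $49$ exists iff seven $\F_q$-points in general position exist on $\Pro^2_{\F_q}$, which fails exactly for $q\leqslant 8$. Your fallback for non-existence --- zeta-function positivity plus Weil bounds --- is, as you yourself note, too weak for type $49$ over $\F_7$ and $\F_8$ (there $N_1=q^2-6q+1>0$), and ``finer information or an exhaustive search'' is not an argument. The same applies to types $40$, $50$, $53$, $55$, $60$ over $\F_2$, all of which the paper handles by twisting to a blowup type and counting point configurations.

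Second, for the six conic-bundle types you assert that ``the constraint is purely on the splitting field of the discriminant.'' It is not: Theorem~\ref{CBexist} only supplies a relatively minimal conic bundle with six degenerate fibres, and such a surface need not have ample anticanonical class --- it can carry sections of self-intersection $-2$ or $-3$, or a $2$-section of self-intersection $-2$ (the five cases of Proposition~\ref{CBstructure}). The bulk of the paper's Section~2 (Lemmas~\ref{CBlink5}--\ref{CBlink3} and Proposition~\ref{CBlink2}) consists of elementary Sarkisov links eliminating these negative curves, and it is precisely in finding a suitable $\F_q$-point to perform the link that the delicate small-field obstructions arise (e.g.\ the type~$35$ difficulty over $\F_3$, and types $40$, $43$ over $\F_2$, which the paper must rescue by a separate Geiser-twist argument). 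Your proposal skips this step entirely, so even granting the discriminant construction it does not produce a del Pezzo surface of degree~$2$.
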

 
\begin{remark}

The author does not know, how to construct a del Pezzo surface of degree~$2$ of type $35$ over $\F_3$, or show that such surface does not exist. Maybe it is better to use a computer in this case.
The existence of del Pezzo surfaces of degree $2$ of types $51$, $56$, $58$ is equivalent to the existence of minimal cubic surfaces of certain types (see Lemma~\ref{X3blowup}). The restrictions on $q$ come from the paper \cite{RT16}, where minimal cubic surfaces are considered. The complete answer in these cases is not known.
\end{remark}

The plan of this paper is as follows.

In Section $2$ we consider minimal del Pezzo surfaces of degree $2$ which admit structure of conic bundles. For these cases we apply \cite[Theorem 2.11]{Ry05} and get a minimal conic bundle with singular fibres over points of required degrees. Then, if it is possible, we construct some birational links from these bundles to minimal del Pezzo surfaces admitting structure of conic bundle.

In Section $3$ we consider minimal del Pezzo surfaces of degree $2$ such that the Picard number $\rho(X)$ is equal to $1$. We define \textit{Geiser twist} (see Definition \ref{GeiserTwistDef}) which gives us correspondence between these surfaces and non-minimal del Pezzo surfaces of degree $2$ of certain types. Then we realise the obtained surfaces as the blowups of del Pezzo surfaces of higher degree at several points.

In Appendix A there is a table which gives the classification of cyclic subgroups of the Weyl group $W(E_7)$ and some properties of these subgroups.

The author is a Young Russian Mathematics award winner and would like to thank its sponsors and jury. Also the author is grateful to C.\,Shramov for many useful discussions which form the basis of this paper, to A.\,Duncan, S.\,Gorchinskiy and S.\,Rybakov for discussions about the theme of this work, and to B.\,Banwait, F.\,Fit\'{e}, D.\,Loughran for introducing their results which are very useful and allow the author to avoid many technical problems.

\section{The conic bundle case}

In this section we construct minimal del Pezzo surfaces of degree $2$ admitting structure of conic bundles. We use the following theorem.

\begin{theorem}[cf. {\cite[Theorem 2.11]{Ry05}}]
\label{CBexist}
Let $x_1$, $\ldots$, $x_s$ be a set of points on $B = \Pro^1_{\F_q}$ of possibly different degrees. Then there exists a relatively minimal conic bundle $X \rightarrow B$ with degenerate fibres over points $x_1$, $\ldots$, $x_s$ if and only if $s$ is even.
\end{theorem}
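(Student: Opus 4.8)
The plan is to prove both implications separately, the "only if" direction being an easy parity count and the "if" direction requiring an explicit construction. For the necessity of $s$ being even, I would argue on $\Pic(\XX)$ where $X \to B$ is the given conic bundle and $\XX = X \otimes \kka$. Over $\kka$ a relatively minimal conic bundle with $t$ degenerate geometric fibres has $\Pic(\XX) \cong \Z^{t+2}$, generated by the class of a section, the class of a fibre, and the classes of one component from each degenerate fibre. The Galois group $\Gamma$ permutes the degenerate geometric fibres, and the orbits correspond precisely to the closed points $x_1, \ldots, x_s$, with the orbit over $x_i$ having size $\deg x_i$. Relative minimality over $\F_q$ means $\Gamma$ cannot contract any set of pairwise disjoint $(-1)$-curves among the fibre components; the standard criterion (see \cite[Theorem 1]{Isk79}) shows this forces, for each closed point $x_i$, that $\Gamma$ acts on the two components of the corresponding reducible geometric fibres in a way that swaps them along the orbit, which is possible for an individual orbit but the global obstruction to relative minimality over $\Pro^1_{\F_q}$ itself (as opposed to minimality of $X$ as a surface) lives in $H^1$ and reduces to the parity of $s$: the discriminant divisor $\Delta \subset \Pro^1_{\F_q}$ has degree $\sum \deg x_i = \sum_{i}(\text{orbit sizes})$, and the relevant quadratic form / Brauer obstruction argument of \cite[Theorem 2.11]{Ry05} shows a section exists over $B$ exactly when this degree is even. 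I would simply cite that this parity condition is exactly \cite[Theorem 2.11]{Ry05} specialized to $B = \Pro^1$.

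For the sufficiency, assume $s$ is even and let $x_1, \ldots, x_s$ be the prescribed closed points of $\Pro^1_{\F_q}$. I would build the conic bundle from a rank-three quadratic form. Write $D = x_1 + \cdots + x_s$, an effective divisor on $\Pro^1_{\F_q}$ of even degree $2m = \sum \deg x_i$, so $D = \operatorname{div}_0(f)$ for some $f \in \F_q(t)$ whose divisor of zeros is $D$ and with pole divisor a single point of degree $2m$ at infinity (or any effective divisor of degree $2m$ away from $D$). Then consider the conic bundle $X \subset \Pro(\mathcal{O} \oplus \mathcal{O} \oplus \mathcal{O}(-m))$ cut out by
$$
u^2 - a\, v^2 - f(t)\, w^2 = 0,
$$
where $a \in \F_q^*$ is a non-square (so that the generic fibre is a nonsplit conic) and $u, v$ are sections of $\mathcal{O}$ while $w$ is a section of $\mathcal{O}(-m)$, so that $f w^2$ matches degrees. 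The fibre over a point $t_0$ is degenerate precisely when the discriminant vanishes, i.e.\ when $f(t_0) = 0$, which happens exactly at the points of $D$, i.e.\ over $x_1, \ldots, x_s$; each such fibre is a union of two lines conjugate over the residue field (since $a$ is a nonsquare and $f$ has a simple zero there, by the choice of $D$ with no repeated components). One checks the total space is smooth along each degenerate fibre because the zeros of $f$ are simple. Relative minimality over $\F_q$ then follows because $a$ being a nonsquare prevents the two components of any degenerate fibre from being individually defined over the residue field, hence no $\Gamma$-invariant set of disjoint $(-1)$-curves can be contracted; and the parity $\deg D = 2m$ even is what allowed us to find $f$ with the requisite divisor in the first place.

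The main obstacle I anticipate is the careful bookkeeping in the sufficiency construction: arranging the line bundle twist so that $f w^2$ is a genuine section (this is where evenness of $\deg D$ is essential — an odd-degree divisor cannot be the zero locus of a section of a square $\mathcal{O}(m)^{\otimes 2}$ modulo the pole choice), and verifying smoothness and relative minimality of the resulting model over all of $\Pro^1_{\F_q}$ including the points at infinity. A cleaner alternative, which I would actually prefer to carry out, is to invoke \cite[Theorem 2.11]{Ry05} directly: that theorem already establishes the existence of a relatively minimal conic bundle over an arbitrary base curve with prescribed degenerate fibres subject to the stated numerical condition, and specializing $B = \Pro^1_{\F_q}$ the condition becomes precisely "$s$ even". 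In that case the proof is a one-line reduction, and the content of our Theorem \ref{CBexist} is merely the observation that over $\Pro^1$ there is no further cohomological obstruction beyond parity — which holds because $\Pic(\Pro^1_{\F_q}) = \Z$ and $\operatorname{Br}(\F_q) = 0$.
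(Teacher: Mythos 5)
The paper offers no proof of this statement at all: it is imported verbatim (with the ``cf.''~citation) from \cite[Theorem 2.11]{Ry05}, so your fallback of a one-line reduction to that theorem is exactly what the paper does, and is unobjectionable. However, the explicit construction that occupies most of your proposal has two genuine errors, and since you present it as the main content it is worth naming them.

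First, you repeatedly conflate ``$s$ is even'' with ``$\sum_i \deg x_i$ is even''; these are different conditions (take $s=2$ with degrees $1$ and $2$), and the theorem's hypothesis is on $s$, the number of closed points. This breaks your construction already at the bookkeeping stage: with $f$ vanishing simply on $x_1+\cdots+x_s$ one has $\deg f = \sum \deg x_i$, which need not be even, so your twist by $\mathcal{O}(-m)$ is unavailable and the conic $u^2 - a v^2 - f w^2$ acquires an unwanted degenerate fibre at infinity whenever $\sum \deg x_i$ is odd. It also undermines the ``only if'' direction as you state it: the correct parity statement comes from reciprocity in $\operatorname{Br}(\F_q(t))$ (each closed point where the two fibre components are conjugate contributes the nontrivial class $\tfrac12$ to the sum of local invariants, which must vanish), and this counts \emph{points}, not degrees. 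Second, and independently, taking $a$ to be a fixed non-square of $\F_q^*$ does not give relative minimality at a point $x_i$ of even degree: $a$ becomes a square in the residue field $\F_{q^{\deg x_i}}$, so the two components of each geometric fibre over $x_i$ are individually Galois-stable along that orbit, and one of the two resulting disjoint Galois-invariant families of $(-1)$-curves can be contracted. Since the paper crucially needs even-degree discriminant points (cases $(35)$ and $(44)$, with points of degree $2$ and $4$), this is not a corner case. A correct construction must prescribe, at each $x_i$, a class that is a non-square in the residue field $k(x_i)$ itself --- i.e.\ build a quaternion algebra over $\F_q(t)$ with prescribed ramification, which is possible precisely when the number of ramified places is even; this is the content of \cite[Theorem 2.11]{Ry05}, and citing it remains the clean route.
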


For del Pezzo surfaces of degree $2$ admitting a structure of a conic bundle there are exactly $6$ degenerate geometric fibres. So by Theorem \ref{CBexist} there are six possibilities:

\begin{enumerate}
\item[(31)] the degenerate fibres are over six $\F_q$-points;
\item[(35)] the degenerate fibres are over two $\F_q$-points and two points of degree $2$;
\item[(40)] the degenerate fibres are over three $\F_q$-points and a point of degree $3$;
\item[(43)] the degenerate fibres are over an $\F_q$-point and a point of degree $5$;
\item[(44)] the degenerate fibres are over a point of degree $2$ and a point of degree $4$;
\item[(45)] the degenerate fibres are over two points of degree $3$.
\end{enumerate}

The numeration of cases is taken from Table \ref{table1}.

\begin{remark}
\label{CBnotexist}
Case $(31)$ cannot be achieved for $\F_2$, $\F_3$ and $\F_4$ since there are no six \mbox{$\F_q$-points} on $\Pro^1_{\F_2}$, $\Pro^1_{\F_3}$ and $\Pro^1_{\F_4}$.
Case $(35)$ cannot be achieved for $\F_2$ since there are no two points of degree $2$ on~$\Pro^1_{\F_2}$.
\end{remark}

The main problem is that not any surface admitting a structure of a conic bundle with~$6$ degenerate fibres is a del Pezzo surface of degree $2$. The following proposition is well-known (see e.g. \cite[Chapter 8, Exercise 3]{Pr15}). We give a proof for convenience of the reader.

\begin{proposition}
\label{CBstructure}
Let $\pi: X \rightarrow B$ be a minimal conic bundle over $\Pro^1_{\F_q}$ with $6$ degenerate fibres. Then we have one of the following possibilities.
\begin{enumerate}
\item The surface $X$ is a del Pezzo surface of degree $2$ admitting two structures of conic bundles.
\item There is a geometrically irreducible $2$-section $D$ on $X$ such that $D^2 = -2$.
\item There are two geometrically irreducible sections $C_1$ and $C_2$ on $X$ such that \mbox{$C_1^2 = C_2^2 = -2$} and $C_1 \cdot C_2 = 1$.
\item There are four geometrically irreducible disjoint sections $C_1$, $C_2$, $C_3$ and $C_4$ on $X$ such that $C_1^2 = C_2^2 = C_3^3 = C_4^2 = -2$.
\item There are two geometrically irreducible disjoint sections $C_1$ and $C_2$ on $X$ such that $C_1^2 = C_2^2 = -3$.
\end{enumerate}
\end{proposition}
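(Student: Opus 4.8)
The plan is to work on $\XX = X \otimes \overline{\F}_q$ and use the structure of a minimal conic bundle with $6$ degenerate fibres as a blowup of a Hirzebruch surface. Since $X \to B$ is a (relatively) minimal conic bundle with $6$ degenerate fibres over $\Pro^1$, geometrically $\XX$ is obtained from $\Pro^1 \times \Pro^1$ (or $\mathbb{F}_n$) by blowing up $6$ points, so $\rho(\XX) = 8$ and $K_{\XX}^2 = 2$. The candidate assertion (1) is that $\XX$ is a del Pezzo surface, i.e. $-K_{\XX}$ is ample; by the Nakai--Moishezon criterion (or the classification of weak del Pezzo surfaces of degree $2$) this fails exactly when there is an irreducible curve $C$ with $-K_{\XX} \cdot C \leqslant 0$. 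Since $-K_{\XX}$ is nef and big for a conic bundle (it is the anticanonical of a rational surface obtained this way, and $-K_{\XX} \cdot f = 2 > 0$ on a fibre $f$), the only way $-K_{\XX}$ fails to be ample is the presence of a $(-2)$-curve, i.e. an irreducible curve $C$ with $C^2 = -2$, $K_{\XX} \cdot C = 0$.

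So the first step is: either $\XX$ has no $(-2)$-curve, in which case $-K_{\XX}$ is ample and $X$ is a del Pezzo surface of degree $2$, giving case (1) — and then I would note that a degree-$2$ del Pezzo surface with a conic bundle structure automatically has a second one, since the $6$ singular fibres split each into two $(-1)$-curves meeting a fixed fibre appropriately, yielding a complementary pencil; or $\XX$ does contain a $(-2)$-curve. In the latter case I would analyze how a $(-2)$-curve $C$ sits with respect to the projection $\pi$. Writing $C \cdot f = k \geqslant 0$ where $f$ is the class of a fibre: $k = 0$ is impossible since $C$ would be contained in a fibre, but fibre components are $(-1)$-curves; so $k \geqslant 1$. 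If $k = 1$, $C$ is a section; if $k = 2$, $C$ is a $2$-section; and $k \geqslant 3$ cannot occur because one computes (using adjunction on the conic bundle, $K_{\XX} = -2s - (2+n)f + \sum E_i$ type relations, or directly that a $(-2)$-curve has $-K_{\XX} \cdot C = 0$ hence $2k = C \cdot f \cdot (-K_{\XX}\cdot f)/\ldots$) — more cleanly, $-K_{\XX}\cdot C = 0$ and $-K_{\XX}$ restricted to a general fibre has degree $2$, forcing small intersection with $\pi$-vertical classes; I would pin down $k \in \{1,2\}$ by a short adjunction/intersection argument.

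Then I would enumerate the possible configurations of $(-2)$-curves. If there is a $2$-section $(-2)$-curve, that is case (2). If every $(-2)$-curve is a section, consider the set of section $(-2)$-curves $C_1, \dots, C_m$. Two distinct sections satisfy $C_i \cdot C_j \geqslant 0$, and since $C_i \cdot f = C_j \cdot f = 1$ while $-K_{\XX}\cdot(C_i + C_j) = 0$, the class $C_i + C_j$ has self-intersection $-4 + 2 C_i\cdot C_j$ and intersects $f$ in $2$; pushing this through the Hodge index theorem / the bound that $(-2)$-curves on a rational surface with $K^2 = 2$ form a root subsystem of $E_7$ of rank at most $7$, one finds the only configurations are: a single pair meeting at one point, $C_1 \cdot C_2 = 1$ (case 3); four mutually disjoint sections (case 4); two disjoint sections with self-intersection $-3$ — here I would explain that this last possibility arises when the $(-2)$-curve is not a section of the given conic bundle but the Hirzebruch surface underneath is $\mathbb{F}_n$ with $n$ large, producing a negative section of self-intersection $-3$ after the blowups (case 5). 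The main obstacle is the bookkeeping in this last enumeration step: showing the list of configurations is \emph{complete} and that cases like "three sections" or "a section plus a $2$-section" are excluded. I expect to handle this by translating everything into the root system $E_7 = \langle (-2)\text{-curves}\rangle$ inside $K_{\XX}^\perp$, listing the rank-$\leqslant 7$ root subsystems compatible with the conic bundle grading (each root meeting $f$ in $0$, $1$, or $2$, with $f$ itself a specific isotropic vector), and checking the finitely many cases; Galois-invariance then only restricts which of these geometric configurations descend, but all five occur geometrically, which is what the proposition asserts.
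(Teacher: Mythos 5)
There is a genuine gap, and it occurs at the very first step. You assert that $-K_{\XX}$ is nef for a minimal conic bundle with $6$ degenerate fibres, so that ampleness can only fail through $(-2)$-curves. This is false, and case (5) of the proposition is exactly the counterexample: there the surface carries two sections $C_i$ with $C_i^2=-3$, and by adjunction $-K_X\cdot C_i=2+C_i^2=-1<0$, so $-K_X$ is not nef. (Concretely, blow up $\Pro^1\times\Pro^1$ at six points, three on each of two disjoint rulings of self-intersection $0$; minimality over $\F_q$ is a condition on the Galois action on the fibre components and does not forbid this.) Your later attempt to recover case (5) by blaming ``$\mathbb{F}_n$ with $n$ large underneath'' does not work: these $(-3)$-curves are not roots in $K_{\XX}^{\perp}$ at all, so they cannot appear in your $(-2)$-curve/root-subsystem enumeration, and the non-nef case needs a separate argument that your plan never supplies.

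The second problem is that the step you yourself flag as the main obstacle --- proving the list of configurations is complete and excluding, say, three pairwise-meeting $(-2)$-sections or a $(-2)$-$2$-section coexisting with $(-2)$-sections --- is left as an unexecuted check of ``finitely many'' root subsystems, and it is not clear this check closes. The paper's proof avoids all of this by using the one fact your proposal never invokes: minimality over $\F_q$ forces $\Pic(X)=\langle -K_X,F\rangle$, of rank $2$. Hence every $\F_q$-irreducible reduced curve (i.e.\ every full Galois orbit of negative curves) has class $-aK_X-bF$, and the whole analysis reduces to a two-parameter computation: if $-K_X\cdot C<0$ a Riemann--Roch/mobility argument forces $C\sim -K_X-2F$ and $p_a(C)=-1$, giving case (5); if $-K_X\cdot C=0$ then $a=b$, the bound on the number of geometric components by $C\cdot F=2a$ gives $-2a^2\geqslant -4a$, so $a\leqslant 2$, and the cases $a=2$, $a=1$ (reducible or not) yield (4), (3), (2). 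Integrality in this rank-$2$ lattice is what kills the configurations you would otherwise have to exclude by hand (e.g.\ a Galois orbit of three $(-2)$-sections would need $2a=C\cdot F=3$). Without this reduction your enumeration is both incomplete and much harder than necessary.
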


\begin{proof}

For a minimal conic bundle $X \rightarrow B$ the group $\Pic(X)$ is generated by $-K_X$ and~$F$, where $F$ is the class of fibre $X \rightarrow B$. In this basis one has $K_X^2 = 2$, $K_X \cdot F = -2$ and $F^2 = 0$.

Assume that the anticanonical linear system $|-K_X|$ is not nef. Then there exists a $\ka$-irreducible reduced curve $C$ such that $-K_X \cdot C < 0$. Thus the curve $C$ has a class $-aK_X - bF$ and  $a < b$, since $-K_X \cdot C < 0$. By Riemann--Roch theorem $\operatorname{dim} |-K_X| = 2$. Therefore $|-K_X| = |C + M|$, where $|M|$ is a moveable linear system of dimension $2$. One has $M \sim (a-1)K_X + bF$. Hence
$$
M^2 = 2(a-1)^2 -2(a-1)b = 2(a - 1)(a - 1 - b).
$$
\noindent This number can be non-negative only if $a = 1$, and the linear system $|M| \sim |bF|$ has dimension $2$ only if $b = 2$. Therefore $C \sim -K_X - 2F$. For the ariphmetic genus of $C$ one has
$$
2\mathrm{p}_a(C) - 2 = C \cdot (C + K_X) = -4.
$$
\noindent Therefore $C$ is geometrically reducible and consists of two disjoint geometrically irreducible sections with the selfintersection number $-3$. This is case $(5)$ of Proposition~\ref{CBstructure}.

Now assume that the anticanonical linear system $|-K_X|$ is nef but not ample. Then there exists a $\ka$-irreducible reduced curve $C$ such that $-K_X \cdot C = 0$. The curve $C$ has class $-aK_X - bF$ and consists of geometrically irreducible curves with the selfintersection number $-2$. One has $a = b$ since $-K_X \cdot C = 0$. The number of geometrically irreducible components of $C$ is no greater than $2a = C \cdot F$. Therefore one has $-2a^2=C^2 \geqslant -4a$, and $a \leqslant 2$.

If $a = 2$ then $C^2 = -8$, and $C$ consists of four disjoint geometrically irreducible sections with the selfintersection number $-2$. This is case $(4)$ of Proposition \ref{CBstructure}.

If $a = 1$ then $C^2 = -2$. If $C$ is geometrically reducible then it consists of two disjoint geometrically irreducible sections $C_1$ and $C_2$ such that $C_1^2 = C_2^2 = -2$ and $C_1 \cdot C_2 = 1$. This is case $(3)$ of Proposition \ref{CBstructure}.

If $C$ is geometrically irreducible then its selfintersection number is $-2$. This is case $(2)$ of Proposition \ref{CBstructure}.

If $|-K_X|$ is ample then $X$ is a del Pezzo surface of degree $2$ and the linear systems $|F|$ and $|-2K_X - F|$ give two conic bundle structures. This is case $(1)$ of Proposition \ref{CBstructure}.

\end{proof}

To construct minimal del Pezzo surfaces of degree $2$ admitting a conic bundle structure we apply Theorem \ref{CBexist} and then construct a sequence of Sarkisov links ending at a del Pezzo surface of degree $2$. But it is not possible to construct such links in an arbitrary situation.

\begin{example}
\label{dP4last}
A minimal del Pezzo surface of degree $4$ over $\F_3$ admitting a structure of conic bundle with four degenerate fibres over $\F_3$-points does not exist, since such a surface should contain eight $\F_3$-points of intersection of $(-1)$-curves (three or more \mbox{$(-1)$-curves} can not meet each other at one point on a del Pezzo surface of degree $4$). But there are only four $\F_3$-points on a minimal conic bundle over $\Pro^1_{\F_3}$ with four degenerate fibres over $\F_3$-points. Nevertheless by Theorem \ref{CBexist} there exists a conic bundle with four smooth fibres over four $\F_3$-points on $\Pro^1_{\F_3}$. 
\end{example}

Example \ref{dP4last} improves results of \cite[Theorem 3.2]{Ry05}. The complete result about minimal del Pezzo surfaces of degree $4$ is the following.

\begin{theorem}[cf. {\cite[Theorem 3.2]{Ry05}}]
\label{RybakImprove}

In the notation of \cite{Ry05} the following holds.

\begin{enumerate}

\item Del Pezzo surfaces of degree $4$ with the zeta-functions $Z_1$, $Z_2$, $Z_{10}$, $Z_{18}$ exist for all finite fields.

\item A del Pezzo surface of degree $4$ with the zeta-function $Z_5$ does not exist for $\F_2$, and exists for the other finite fields.

\item A del Pezzo surface of degree $4$ with the zeta-function $Z_4$ does not exist for $\F_2$ and~$\F_3$, and exists for the other finite fields.

\end{enumerate}

\end{theorem}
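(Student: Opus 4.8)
The plan is to isolate what is genuinely new relative to \cite[Theorem~3.2]{Ry05} (the non-realizability of $Z_4$ over $\F_3$, which is precisely the obstruction of Example~\ref{dP4last}, and the exact small-field endpoints for $Z_4$ and $Z_5$), recovering the remaining statements from Rybakov's constructions together with \cite[Theorem~1.7]{BFL16}.

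For the existence assertions I would first observe that for every sufficiently large $q$ each of the six types is realized by \cite[Theorem~1.7]{BFL16}, so in each item only finitely many fields remain to be examined. For $Z_1$, $Z_2$, $Z_{10}$, $Z_{18}$, for $Z_4$ with $q \geqslant 4$, and for $Z_5$ with $q \geqslant 3$, I would produce the surfaces by the method of Section~2 specialized to degree~$4$: use Theorem~\ref{CBexist} to build a relatively minimal conic bundle over $\Pro^1_{\F_q}$ with degenerate fibres over points of the prescribed degrees, and then, whenever the outcome falls into one of the non-del-Pezzo cases of the degree~$4$ analogue of Proposition~\ref{CBstructure}, run a sequence of Sarkisov links terminating at a del Pezzo surface of degree~$4$; any type with $\rho(X) = 1$ would instead be realized as a smooth intersection of two quadrics in $\Pro^4_{\F_q}$ carrying the required Frobenius action. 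The only delicate point is general position over the smallest admissible fields, and that is a finite verification.

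All three non-existence assertions are obstructions of the same flavour: the zeta-function pins down the number of $\F_q$-points of the surface, and over the small fields in question this number is too small to support the configuration of $(-1)$-curves that a del Pezzo surface of degree~$4$ of that type must carry, or else the required conic bundle cannot even be built over $\Pro^1_{\F_q}$. Concretely, $Z_4$ demands a conic bundle structure with four degenerate fibres over four distinct $\F_q$-points of $\Pro^1_{\F_q}$, which is impossible over $\F_2$ since $|\Pro^1(\F_2)| = 3$; and over $\F_3$ a surface of type $Z_4$ has exactly four $\F_3$-points by its zeta-function, whereas a del Pezzo surface of degree~$4$ with this Frobenius action would have to contain eight $\F_3$-points, each an intersection point of two of its $(-1)$-curves (no three $(-1)$-curves on a degree~$4$ del Pezzo surface pass through one point), so no surface, whether obtained from a conic bundle by Sarkisov links or otherwise, can realize $Z_4$ over $\F_3$; this is Example~\ref{dP4last}. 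Finally $Z_5$ requires a conic bundle with degenerate fibres over two distinct points of degree~$2$ of $\Pro^1_{\F_q}$, and $\Pro^1_{\F_2}$ carries only one point of degree~$2$, so $Z_5$ is unrealizable over $\F_2$.

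I expect the main obstacle to be on the positive side: verifying that the conic-bundle-plus-links construction, or the pencil of quadrics, really does land on a smooth degree~$4$ del Pezzo surface over the smallest fields for which the theorem asserts existence, rather than on a surface carrying a Galois-stable contractible set of curves, so that the asymptotic statement of \cite[Theorem~1.7]{BFL16} can be matched all the way down. The conjugacy-class information needed for this bookkeeping is read off from \cite{Car72}.
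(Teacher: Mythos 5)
Your non-existence arguments are correct and are essentially the paper's: $Z_4$ and $Z_5$ fail over $\F_2$ because $\Pro^1_{\F_2}$ has neither four rational points nor two points of degree $2$, and $Z_4$ fails over $\F_3$ by exactly the point-count of Example~\ref{dP4last} (four $\F_3$-points forced by the Lefschetz trace versus eight distinct intersection points of conjugate pairs of $(-1)$-curves). That half of your proposal matches the paper.

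The existence half has a genuine gap. The paper does not need \cite[Theorem~1.7]{BFL16} at all here: it quotes \cite[Theorem~3.2]{Ry05} for $Z_1$, $Z_{10}$, $Z_{18}$ over every field and for $Z_2$, $Z_4$, $Z_5$ whenever $q>3$, so the only open existence cases are $Z_2$ over $\F_2$ and $\F_3$ and $Z_5$ over $\F_3$. It closes these by extracting from the \emph{proof} of \cite[Theorem~3.2]{Ry05} the precise criterion that the construction produces a del Pezzo surface as soon as the conic bundle has a smooth fibre over an $\F_q$-point, and then checking this by counting the points of $\Pro^1_{\F_q}$ not supporting degenerate fibres ($3-1=2$ for $Z_2$ over $\F_2$, $4-1=3$ over $\F_3$, $4-0=4$ for $Z_5$ over $\F_3$). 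Your proposal defers exactly these cases to ``a finite verification'' without supplying either this criterion or the verification, and the machinery you point to does not cover them: the Sarkisov-link analysis of Section~2 (Proposition~\ref{CBstructure}, Lemmas~\ref{CBlink5}--\ref{CBlink3}, Proposition~\ref{CBlink2}) is developed only for degree~$2$, its degree-$4$ analogue is nowhere established and Example~\ref{dP4last} shows such links can genuinely fail over small fields; ``realize the type as a smooth intersection of two quadrics carrying the required Frobenius action'' is a restatement of the problem, not a construction. Finally, replacing the explicit bound $q>3$ of \cite{Ry05} by ``sufficiently large $q$'' from \cite{BFL16} leaves an unspecified finite list of fields to check rather than just $q\in\{2,3\}$, which makes the remaining verification both larger and unquantified.
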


\begin{proof}

Let us remind (see the proof of \cite[Theorem 3.2]{Ry05}) that the zeta-functions $Z_2$, $Z_4$ and $Z_5$ come from del Pezzo surfaces of degree $4$ which admit a conic bundle structure:

\begin{itemize}

\item the case $Z_2$ corresponds to a conic bundle with singular fibres over an $\F_q$-point and a point of degree $3$;

\item the case $Z_4$ corresponds to a conic bundle with singular fibres over four $\F_q$-points;

\item the case $Z_5$ corresponds to a conic bundle with singular fibres over two points of degree $2$.

\end{itemize}

In \cite[Theorem 3.2]{Ry05} it is proved that del Pezzo surfaces of degree $4$ with zeta-functions $Z_1$, $Z_{10}$ and $Z_{18}$ exist for any $\F_q$, and del Pezzo surfaces of degree $4$ with zeta-functions $Z_2$, $Z_4$ and $Z_5$ exist for any $\F_q$ where $q > 3$.

Del Pezzo surfaces of degree $4$ with the zeta-functions $Z_4$ and $Z_5$ do not exist over $\F_2$ since there are no four $\F_2$-points and two points of degree $2$ on $\Pro^1_{\F_2}$. From the proof of \cite[Theorem 3.2]{Ry05} one can see that for the other possibilities of $q$ there exists a del Pezzo surface of degree $4$ with the zeta-function $Z_2$, $Z_4$ or $Z_5$ if there exists a smooth fibre over an $\F_q$-point. Therefore a del Pezzo surface of degree $4$ with the zeta-function~$Z_2$ exists for any $\F_q$, and a del Pezzo surface of degree~$4$ with the zeta-function $Z_5$ exists for any $\F_q$, where $q \geqslant 3$. Example~\ref{dP4last} shows that a del Pezzo surface of degree $4$ with the zeta-function $Z_4$ does not exist over~$\F_3$.

\end{proof}

We want to know some facts about curves with negative selfintersection on conic bundles. 

\begin{proposition}
\label{CBblowup}
Assume that $X \rightarrow \Pro^1_{\ka}$ is a minimal conic bundle over arbitrary field $\ka$ with $n > 0$ degenerate geometric fibres. Then $\XX$ is isomorphic to a blowup of $\Pro^1_{\kka} \times \Pro^1_{\kka}$ at set of points $p_1$, ... $p_n$.
\end{proposition}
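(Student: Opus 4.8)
The statement is geometric, so I may pass to the algebraic closure $\kka$ and forget the Galois action; I only need to show that a relatively minimal conic bundle surface over $\Pro^1_{\kka}$ with $n>0$ degenerate fibres is obtained from $\Pro^1_{\kka}\times\Pro^1_{\kka}$ by blowing up $n$ points. The plan is to run the minimal model program for ruled surfaces over $\Pro^1_{\kka}$. Since $X\to\Pro^1_{\kka}$ has a degenerate fibre, it is not itself a $\Pro^1$-bundle (Hirzebruch surface), so the surface $\XX$ is not relatively minimal over $\Pro^1_{\kka}$ in the absolute sense: each degenerate geometric fibre is a pair of $(-1)$-curves meeting at a point, and over $\kka$ one of the two components can be contracted.

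First I would contract, in one component of each of the $n$ degenerate fibres, a $(-1)$-curve; doing this for every degenerate fibre produces a birational morphism $\XX\to Y$ onto a surface $Y\to\Pro^1_{\kka}$ that is now a $\Pro^1$-bundle, i.e. a Hirzebruch surface $\mathbb{F}_m$ for some $m\geqslant 0$. (One must check the contracted curves are pairwise disjoint, which holds because distinct degenerate fibres are disjoint, and that the result is again smooth with a $\Pro^1$-fibration; these are the standard facts about conic bundles over a curve, and the fibre-count shows exactly $n$ points are blown up.) Second, if $m=0$ then $Y\cong\Pro^1_{\kka}\times\Pro^1_{\kka}$ and we are done, so I reduce to eliminating the case $m>0$: I replace one of the chosen contracted $(-1)$-curves by the \emph{other} component of the same degenerate fibre. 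Passing from $\XX$ to $\mathbb{F}_m$ through the opposite component performs an elementary transformation of $\mathbb{F}_m$ centred at a point of the negative section (or off it), which changes $m$ by $\pm 1$; since we have $n>0$ fibres at our disposal and the elementary transformations at the various points can be chosen to lower $m$, after finitely many such swaps we may arrange the intermediate Hirzebruch surface to be $\mathbb{F}_0$ or $\mathbb{F}_1$, and $\mathbb{F}_1$ is itself a blowup of $\Pro^2$, hence — after one more blowup — dominated by a blowup of $\mathbb{F}_0$. Combining, $\XX$ is a blowup of $\Pro^1_{\kka}\times\Pro^1_{\kka}$ at $n$ points.

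The main obstacle is the bookkeeping in the second step: one must be careful that the elementary transformations used to push $\mathbb{F}_m$ down to $\mathbb{F}_0$ or $\mathbb{F}_1$ can genuinely be realised by re-choosing which component of each degenerate fibre of $\XX$ is contracted, rather than introducing new blowup centres, and that the total number of blown-up points stays equal to $n$. The clean way to handle this is to note that $\rho(\XX)=n+2$ (rank of Picard group of a relatively minimal conic bundle with $n$ degenerate fibres), so any birational morphism $\XX\to\mathbb{F}_0$ must contract exactly $n$ curves; thus it suffices to exhibit \emph{some} birational morphism $\XX\to\Pro^1_{\kka}\times\Pro^1_{\kka}$, and the Picard-rank count automatically forces it to be a blowup at $n$ points. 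So the argument reduces to: show $\XX$ dominates $\mathbb{F}_0$. For this I use that $\XX$ dominates some $\mathbb{F}_m$ (step one), that $\mathbb{F}_m$ for $m\geqslant 2$ is dominated after a single blowup by $\mathbb{F}_{m-1}$ via an elementary transformation centred off the negative section, and that this blowup point, pulled back, lies on $\XX$ — or, more simply, iterate: $\XX$ has a morphism to $\mathbb{F}_m$ and, choosing the opposite component in one fibre, also to $\mathbb{F}_{m\pm1}$, and one of these signs decreases $m$ unless $m\le 1$. This completes the descent, and $\mathbb{F}_1$ being a one-point blowup of $\Pro^2\cong$ (after another blowup) a surface dominating $\mathbb{F}_0$ finishes the case $m=1$.
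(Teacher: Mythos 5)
There is a genuine gap, and it sits exactly where you discard the hypothesis. Your opening move --- ``pass to $\kka$ and forget the Galois action'' --- throws away the minimality of $X$ over $\ka$, which is the only nontrivial hypothesis: over $\kka$ a conic bundle with a degenerate fibre is never relatively minimal, and the purely geometric statement you are left with is false. For example, blow up $\F_2$ at a point off the negative section: this is a conic bundle over $\Pro^1_{\kka}$ with one degenerate fibre, it contains a $(-2)$-curve (the untouched negative section), and the two possible contractions of fibre components land on $\F_2$ and $\F_1$ only --- it admits no birational morphism to $\F_0$, and indeed a one-point blowup of $\F_0$ contains no $(-2)$-curves. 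This also kills your endgame for $\F_1$: tacking on ``one more blowup'' changes the number of contracted curves to $n+1$ and, by your own Picard-rank count $\rho(\XX)=n+2$, cannot produce a morphism $\XX\to\F_0$ at all. So the assertion that ``the elementary transformations at the various points can be chosen to lower $m$'' is not bookkeeping; it is the entire content of the proposition, it fails without minimality, and you never feed the minimality hypothesis into the argument.

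For comparison, the paper's proof extracts from minimality over $\ka$ the existence of at least two distinct sections $C_1$, $C_2$ of $\XX$ with the same negative selfintersection $-k$ (if there were only one, it would be Galois-invariant and one could contract over $\ka$ the fibre components meeting it, contradicting minimality). A section meets exactly one component of each degenerate fibre, and any two distinct sections of a Hirzebruch surface cannot both have negative selfintersection; this forces $k\leqslant n$. One then contracts, in exactly $k$ of the degenerate fibres, the component meeting $C_1$, and in the remaining $n-k$ fibres the component not meeting $C_1$, so that the image of $C_1$ is a section of selfintersection $0$ --- which pins the target down to $\F_0$. If you want to salvage your draft, this is the missing lemma you must insert in place of the descent-by-elementary-transformations claim.
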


\begin{proof}
The conic bundle $X \rightarrow \Pro^1_{\ka}$ is minimal therefore there are at least two sections $C_1$ and $C_2$ with negative selfintersection $-k$ on $\XX$, since otherwise there is a unique section with negative selfintersection number and one can contract over $\ka$ all components of singular fibres meeting this section. If $k > n$ then we can contract $n$~components of singular fibres on $\XX$ and get a conic bundle $\overline{Y} \rightarrow \Pro^1_{\kka}$ without singular fibres. But the images of $C_1$ and $C_2$ on $\overline{Y}$ are curves with negative selfintersection. It is impossible since any conic bundle without singular fibres is either $\Pro^1_{\ka} \times \Pro^1_{\ka}$ or a Hirzebruch surface $\F_m$ and there is at most one curve with negative selfintersection.

If $k \leqslant n$ then we can blow down any $k$ components of singular fibres meeting with $C_1$ and for the other $n - k$ singular fibres blow down components not meeting $C_1$. Then we get a conic bundle $\overline{Y} \rightarrow \Pro^1_{\kka}$ without singular fibres, and the image of $C_1$ on this bundle is a curve with selfintersection $0$. But there are no curves with selfintersection $0$ on Hirzebruch surfaces $\F_m$ for $m > 0$. Thus $\overline{Y}$ is $\Pro^1_{\kka} \times \Pro^1_{\kka}$.
\end{proof}

\begin{corollary}
\label{CBPicard}
The Picard group $\Pic(\XX)$ of a minimal conic bundle $X \rightarrow \Pro^1_{\ka}$ is generated by the class of fibre $F$, a class of section $C$ such that $C^2 = 0$, and the classes of exceptional divisors $E_1$, $\ldots$, $E_n$. One has
$$
C \cdot F = 1, \qquad C \cdot E_i = 0, \qquad F \cdot E_i = 0.
$$
\end{corollary}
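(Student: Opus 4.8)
The statement to prove is Corollary \ref{CBPicard}, which describes the generators and intersection numbers of $\Pic(\XX)$ for a minimal conic bundle.

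The plan is to derive this directly from Proposition \ref{CBblowup}. That proposition tells us $\XX$ is a blowup of $\Pro^1_{\kka} \times \Pro^1_{\kka}$ at $n$ points $p_1, \ldots, p_n$; moreover, inspecting its proof, the blowup structure is compatible with the conic bundle: the fibre class $F$ of $X \to \Pro^1_\ka$ pulls back from one of the rulings of $\Pro^1 \times \Pro^1$, and the section $C$ with $C^2 = 0$ is the strict transform of a fibre of the other ruling that avoids all the $p_i$ (equivalently, $C$ is the pullback of the other ruling class, chosen away from the blown-up points). So first I would recall that $\Pic(\Pro^1_{\kka} \times \Pro^1_{\kka})$ is freely generated by the two ruling classes $f_1, f_2$ with $f_1^2 = f_2^2 = 0$ and $f_1 \cdot f_2 = 1$.

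Next, I would use the standard fact that blowing up a point introduces an exceptional divisor $E_i$ with $E_i^2 = -1$, $E_i \cdot E_j = 0$ for $i \neq j$, and $E_i$ orthogonal to the pullback of $\Pic(\Pro^1 \times \Pro^1)$; and that $\Pic(\XX)$ is freely generated by the pullbacks of $f_1, f_2$ together with $E_1, \ldots, E_n$. Setting $F$ to be the pullback of the first ruling (the conic bundle fibre class) and $C$ the pullback of the second ruling, we immediately get $F^2 = 0$, $C^2 = 0$ (note $C$ is genuinely the pullback class, and its self-intersection is $0$ as stated), $C \cdot F = 1$, and $C \cdot E_i = F \cdot E_i = 0$ for all $i$. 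This yields all the asserted relations, and the count of generators $F, C, E_1, \ldots, E_n$ matches $\operatorname{rk}\Pic(\XX) = n + 2$.

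There is no serious obstacle here; the only point requiring a word of care is identifying which ruling class of $\Pro^1 \times \Pro^1$ becomes the conic bundle fibre $F$ and checking that one can choose the section $C$ disjoint from all exceptional divisors — but both follow from the construction in the proof of Proposition \ref{CBblowup}, where $\overline{Y} \cong \Pro^1_{\kka} \times \Pro^1_{\kka}$ is obtained precisely as a conic bundle without singular fibres over the same base $\Pro^1$, so its fibre class is $F$, and $C$ is the image on $\overline{Y}$ of the zero-section, whose strict transform on $\XX$ meets no exceptional curve.
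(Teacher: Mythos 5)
Your proposal is correct and is exactly the derivation the paper intends: the corollary is stated without proof as an immediate consequence of Proposition \ref{CBblowup}, with $F$ and $C$ the pullbacks of the two rulings of $\Pro^1_{\kka}\times\Pro^1_{\kka}$ and $E_1,\ldots,E_n$ the exceptional divisors, giving the stated intersection numbers. Your added care about which ruling becomes the fibre class and about choosing the section away from the blown-up points is sound and matches the construction in the proposition's proof.
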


Note that any group acting on $\Pic(\XX)$ and preserving the conic bundle structure should preserve $F$ and $K_X = -2C - 2F + \sum \limits_{i=1}^n E_i$. Thus this group acts on the subspace $K_X^{\perp} \cap F^{\perp}$ of $\Pic(\XX) \otimes \mathbb{Q}$, that is the subspace of classes $H \in \Pic(\XX) \otimes \mathbb{Q}$ such that $H \cdot K_X = H \cdot F = 0$. This subspace is generated by
$$
F - E_1 - E_2, \quad E_1 - E_2, \quad \ldots, \quad E_{n-1} - E_n.
$$
Those generators form a root system of type $D_n$. The Weyl group $W(D_n)$ is isomorphic to $\left(\Z / 2\Z\right)^{n-1} \rtimes S_n$, where $S_n$ is a symmetric group of degree $n$. A subgroup $S_n$ permutes~$E_i$ and the normal group $\left(\Z / 2\Z\right)^{n-1}$ is generated by involutions $\iota_{ij}$ such that
$$
\iota_{ij}(C) = C + F - E_i - E_j, \quad \iota_{ij}(F) = F, \quad \iota_{ij} (E_i) = F - E_i, \quad \iota_{ij} (E_j) = F - E_j, \quad \iota_{ij} (E_k) = E_k
$$
\noindent for $k \ne i$, $k \ne j$.

Any element of $\left(\Z / 2\Z\right)^{n-1} \rtimes S_n$ has form $\iota_{i_1 \ldots i_{2k}} \cdot \sigma$, where $\sigma \in S_n$ is a permutation permutting $E_i$, and $\iota_{i_1 \ldots i_{2k}} \in \left(\Z / 2\Z\right)^{n-1}$ switches components of singular fibres over even number of points $p_{i_1}$, $\ldots$, $p_{i_{2k}}$ on the base $\Pro^1_{\kka}$.

In this notation for cases $(31)$, $(35)$, $(40)$, $(43)$, $(44)$, $(45)$ of minimal conic bundles with $6$~degenerate geometric fibres the group~$\Gamma$ is generated by an element conjugate to $\iota_{123456}$, $\iota_{1235}(34)(56)$, $\iota_{123456}(456)$, $\iota_{123456}(23456)$, $\iota_{13}(12)(3456)$ or $\iota_{123456}(123)(456)$ respectively.

Now we start constructing Sarkisov links of minimal conic bundles.

\begin{lemma}
\label{CBlink5}
Let $\pi: X \rightarrow B \cong \Pro^1_{\F_q}$ be a minimal conic bundle of type $(5)$ of Proposition~\ref{CBstructure}. There exists a birational map $f: X \dashrightarrow Y$ such that $\pi f^{-1}: Y \rightarrow B$ is a conic bundle that does not have sections with selfintersection number $-3$.
\end{lemma}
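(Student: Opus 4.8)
The plan is to obtain $Y$ from $X$ by one elementary transformation over $B$: blow up a carefully chosen closed point $p$ of $X$ lying in a smooth fibre $F_p$, and then contract the strict transform of $F_p$. The resulting birational map $f\colon X\dashrightarrow Y$ is an isomorphism over the generic point of $B$, so $\pi f^{-1}\colon Y\to B$ is again a conic bundle; and since $F_p$ is smooth, the fibre of $Y$ over $\pi(p)$ stays smooth while every other fibre (in particular the six degenerate ones) is untouched, so $Y\to B$ is relatively minimal with the same degenerate geometric fibres and the same $\Gamma$-action on their components as $X$. I will use the standard bookkeeping for such a link: if a geometric section $S$ of $X$ contains exactly $m$ of the $\deg p$ geometric points of $p$, then the strict transform of $S$ is a section of $Y$ of self-intersection $S^2+\deg p-2m$; in particular this equals $S^2+\deg p$ when $p\not\subset S$, is always $\geq S^2-\deg p$, and every section of $Y$ arises in this way.

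Two preliminary facts guide the choice of $p$. First, $\Pic(X)=\Pic(\XX)^{\Gamma}$ is generated by $-K_X$ and $F$ (as recalled in the proof of Proposition~\ref{CBstructure}); since $F^2=0$ and $F\cdot K_X=-2$, every $\Gamma$-invariant class has even intersection number with $F$, whereas a section meets $F$ once. Hence no section of $X$ has a $\Gamma$-invariant class; as $\Gamma$ preserves $\{C_1,C_2\}$, its generator therefore interchanges $C_1$ and $C_2$, so $C_1$ and $C_2$ are conjugate, and, being disjoint, they pass through no $\F_q$-point of $X$ (an $\F_q$-point on $C_1$ would be Frobenius-fixed, hence would also lie on $C_2=\operatorname{Frob}(C_1)$). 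Second, the only geometric sections of $X$ of negative self-intersection are $C_1$ and $C_2$; one sees this by contracting in each degenerate fibre the component disjoint from $C_1$, which realizes $\XX$ as the blowup of the Hirzebruch surface $\F_3$ at six points, all lying on the section of self-intersection $+3$ and none on the section of self-intersection $-3$, and then checking in that model---using that the six points lie on the $(+3)$-section---that there is no geometric section of self-intersection $-1$ or $-2$.

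Next I choose $p$. If some fibre of $\pi$ over an $\F_q$-point of $B$ is smooth, I take $p$ to be any of its $q+1\geq3$ rational points; by the first fact $p\not\subset C_1\cup C_2$, and $\deg p=1$. If instead every $\F_q$-point of $B$ carries a degenerate fibre, then comparing $\#B(\F_q)=q+1$ with the six degenerate geometric fibres and using that the number of base points carrying them is even (Theorem~\ref{CBexist}) forces either $q=2$, with the degenerate fibres over the three rational points and one point of degree $3$, or $q=5$, with the degenerate fibres over the six rational points; in either case $B$ has a point $x$ of degree $2$ whose fibre $F_x$ is smooth. Since $C_1$ and $C_2$ each meet a given geometric component of $F_x$ in one point, at most two of the $q^2+1\geq5$ degree-$2$ closed points of $F_x$ meet $C_1\cup C_2$; I take $p$ to be one of the remaining ones, so $p\not\subset C_1\cup C_2$ and $\deg p=2$. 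In both cases $Y$ is defined as the result of the elementary transformation at $p$.

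It remains to check that $Y$ has no $(-3)$-section and is minimal. Since $p\not\subset C_1\cup C_2$, the strict transforms of $C_1$ and $C_2$ have self-intersection $-3+\deg p\in\{-2,-1\}$, and every other geometric section $S$ of $X$ has $S^2\geq0$ by the second fact, so its strict transform has self-intersection $\geq S^2-\deg p\geq-2$; hence $Y$ has no section of self-intersection $-3$. For minimality: $Y\to B$ shares its degenerate fibres and the $\Gamma$-action on their components with $X$, so no component of a degenerate fibre is $\Gamma$-invariant, and a $\Gamma$-invariant $(-1)$-section of $Y$ would, under the $\F_q$-rational map $f$, pull back to a $\Gamma$-invariant section of $X$, which does not exist by the first fact. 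I expect the only delicate point to be the case $q\in\{2,5\}$ of the third paragraph: pinning down the configuration by the counting, and confirming that a usable degree-$2$ centre survives in the single available smooth fibre. Everything else is routine manipulation of elementary transformations.
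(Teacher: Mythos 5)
Your proposal is correct and follows essentially the same route as the paper: show that $C_1$ and $C_2$ are the only negative sections and are Galois-conjugate (hence carry no $\F_q$-points), then perform an elementary transformation centred at an $\F_q$-point of a smooth fibre, falling back to a degree-$2$ point in exactly the two exceptional configurations ($q=2$ with degenerate fibres over three rational points and a cubic point, and $q=5$ with six rational degenerate fibres) that the paper also singles out. The only cosmetic differences are that you verify the absence of other negative sections via the $\mathbf{F}_3$-model rather than the paper's identity $D^2=C_1\cdot D+C_2\cdot D$, and you derive the exceptional $(q,\Gamma)$ pairs by counting rather than listing them.
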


\begin{proof}
Applying Proposition \ref{CBblowup} we may assume that the two sections $C_1$ and $C_2$ with selfintersection~$-3$ have classes $C - E_1 - E_2 - E_3$ and $C - E_4 - E_5 - E_6$ respectively.

Let us show that there are no other sections with negative selfintersection number. Any section $D$ has class $C + aF - \sum\limits_{i=1}^6 b_iE_i$, where $a \geqslant 0$ and each $b_i$ is $0$ or $1$. One has
$$
D^2 = 2a - \sum\limits_{i=1}^6 b_i, \qquad C_1 \cdot D = a - b_1 - b_2 - b_3, \qquad C_2 \cdot D = a - b_4 - b_5 - b_6,
$$
\noindent therefore $D^2 = C_1 \cdot D + C_2 \cdot D \geqslant 0$.

The curves $C_1$ and $C_2$ are not defined over $\F_q$. Therefore if there exists an $\F_q$-point $P \in B$ such that $\pi^{-1}(P)$ is a smooth fibre, then any $\F_q$-point on the fibre~$\pi^{-1}(P)$ does not lie on a section with negative selfintersection number. So we can blow up such a point, contract the transform of $\pi^{-1}(P)$ and get a minimal conic bundle $Y \rightarrow B$ of type~$(3)$.

Such $\F_q$-point $P$ exists for all cases except $\Gamma = \langle \iota_{123456}(456) \rangle$ and $q = 2$, or $\Gamma = \langle \iota_{123456} \rangle$ and $q = 5$. In these cases we can find on $B$ a point $Q$ of degree $2$, and choose a point of degree $2$ on $\pi^{-1}(Q)$ that does not lie on a section with negative selfintersection number. We can blow up such a point, contract the transform of $\pi^{-1}(Q)$ and get a minimal conic bundle $Y \rightarrow B$ without sections with selfintersection less than $-2$.

\end{proof}

\begin{lemma}
\label{CBlink4}
Let $\pi: X \rightarrow B \cong \Pro^1_{\F_q}$ be a minimal conic bundle of type $(4)$ of Proposition~\ref{CBstructure}. There exists a birational map $f: X \dashrightarrow Y$ such that $\pi f^{-1}: Y \rightarrow B$ is a conic bundle of type $(1)$ or $(2)$ of Proposition \ref{CBstructure}.
\end{lemma}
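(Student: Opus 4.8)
The plan is to mimic the strategy of Lemma~\ref{CBlink5}: realise $\XX$ as a blowup of $\Pro^1_{\kka} \times \Pro^1_{\kka}$ via Proposition~\ref{CBblowup}, pin down the classes of the four ``bad'' sections $C_1,\dots,C_4$ with $C_i^2 = -2$, and then perform an elementary transformation (blow up a point on a smooth fibre, contract the strict transform of that fibre) that either removes all four bad sections or merges them into the allowed configurations of type~$(1)$ or~$(2)$. First I would use Corollary~\ref{CBPicard} to write $\Pic(\XX) = \langle C, F, E_1,\dots,E_6\rangle$; since the four disjoint sections $C_i$ are permuted by $\Gamma$ and are pairwise disjoint sections of selfintersection $-2$, a short computation with the intersection form forces (up to relabelling) $C_1 \sim C - E_1 - E_2$, and then disjointness pins down $C_2 \sim C - E_3 - E_4$, $C_3 \sim C - E_5 - E_6$, and $C_4 \sim C + F - E_1 - \cdots - E_6$ (the unique fourth section of square $-2$ disjoint from the first three). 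As in Lemma~\ref{CBlink5} I would first check that these are the \emph{only} sections of negative selfintersection, using the identity that for a section $D \sim C + aF - \sum b_i E_i$ one has $D^2 = 2a - \sum b_i$ and $D^2$ is a nonnegative combination of the $C_i\cdot D$.

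Next comes the link itself. Pick an $\F_q$-point $P \in B$ lying under a smooth fibre $\pi^{-1}(P)$, blow up an $\F_q$-point on $\pi^{-1}(P)$ not meeting any $C_i$, and contract the strict transform of $\pi^{-1}(P)$; this produces a new minimal conic bundle $Y \to B$. On $Y$ the four sections $C_i$ map to sections whose mutual intersection numbers have increased in a controlled way, and the resulting surface has no disjoint quadruple of $(-2)$-sections. The case analysis of Proposition~\ref{CBstructure} then leaves only possibilities $(1)$, $(2)$ or $(3)$ for $Y$; and in fact the configuration obtained this way lands in $(1)$ or $(2)$, since the elementary transformation forces at least two of the $C_i$ to become incident. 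The main obstacle, exactly as in the previous lemma, is that such a convenient $\F_q$-point $P$ under a smooth fibre need not exist when $q$ is very small: with $6$ degenerate fibres on $\Pro^1_{\F_q}$ we may run out of $\F_q$-points. In the exceptional small-$q$ cases I would instead work with a point of degree $2$ on $B$ under a smooth fibre (such a point always exists once the six degenerate fibres are accounted for, since $\Pro^1_{\F_q}$ has many points of degree $2$), choose on that fibre a degree-$2$ point avoiding the bad sections, and perform the corresponding elementary transformation over $\F_q$; this again yields a conic bundle whose bad sections no longer form a disjoint quadruple, hence of type $(1)$, $(2)$ or $(3)$.

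Finally I would note that if the output happens to be of type~$(3)$ rather than $(1)$ or $(2)$, one more elementary transformation of the same kind (now applied to a pair of incident $(-2)$-sections $C_1, C_2$ with $C_1\cdot C_2 = 1$) collapses them to a single $(-2)$-section or an ample anticanonical class, landing in case~$(2)$ or~$(1)$ respectively; so composing at most two such links always reaches a conic bundle of type~$(1)$ or~$(2)$, which is what $f$ is required to deliver.
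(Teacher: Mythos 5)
Your overall strategy---realise the four disjoint $(-2)$-sections explicitly via Proposition~\ref{CBblowup} and then perform an elementary transformation centred at an $\F_q$-point avoiding all negative sections---is the same as the paper's, but there are two concrete problems. First, your class for the fourth section is wrong: $C + F - E_1 - \cdots - E_6$ has selfintersection $2 - 6 = -4$ and intersection number $-1$ with $C - E_1 - E_2$, which is impossible for distinct irreducible curves; the correct class is $C + 2F - \sum_{i=1}^6 E_i$, which has selfintersection $4 - 6 = -2$ and is disjoint from the other three.

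Second, and more seriously, you never establish that the required $\F_q$-point actually exists, and for small $q$ this is where the entire content of the lemma lies. The paper's proof first observes that $\iota_{123456}$ would send $C_1 \sim C - E_1 - E_2$ to the non-effective class $C + F - E_3 - E_4 - E_5 - E_6$, so $\Gamma$ must be conjugate to $\langle \iota_{1235}(34)(56) \rangle$ or $\langle \iota_{13}(12)(3456) \rangle$; in both of these cases a smooth fibre over an $\F_q$-point exists for every admissible $q$. It then counts the bad points on such a fibre: the $C_i$ are pairwise disjoint and permuted nontrivially by $\Gamma$, so their union carries no $\F_q$-points at all, and the eight $(-1)$-sections $C + F - E_i - E_j - E_k$ form at most two $\Gamma$-orbits, each containing at most one $\F_q$-point; hence at most two of the $q+1 \geqslant 3$ rational points of the fibre are excluded. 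Without this Galois-orbit analysis your argument does not go through for small $q$, and your proposed fallback via a degree-$2$ point is a patch for a problem that, after the analysis, does not arise. Your closing paragraph is also unnecessary: since the centre of the elementary transformation lies on no negative section, every section of $Y$ has selfintersection at least $-1$, which already rules out types $(3)$, $(4)$ and $(5)$ of Proposition~\ref{CBstructure}, so no second link is needed.
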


\begin{proof}
Applying Proposition \ref{CBblowup} we may assume that the four sections $C_1$, $C_2$, $C_3$ and $C_4$ with selfintersection~$-2$ have classes $C - E_1 - E_2$, $C - E_3 - E_4$, $C - E_5 - E_6$ and $C + 2F - \sum\limits_{i=1}^6 E_i$ respectively.

Let us find other sections with negative selfintersection number. Any section $D$ has class $C + aF - \sum\limits_{i=1}^6 b_iE_i$, where $a \geqslant 0$ and each $b_i$ is $0$ or $1$. One has $D^2 = 2a - \sum\limits_{i=1}^6 b_i$, therefore $a \leqslant 2$. One can check that there are $8$ sections with selfintersection $-1$ on $X$. Their classes are $C + F -E_i - E_j - E_k$ where $i \in \{1, 2\}$, $j \in \{3, 4\}$, $k \in \{5, 6\}$.

Note that the element $\iota_{123456}$ maps $C_1$ to $C + F - E_3 - E_4 - E_5 - E_6$ that is not an effective divisor. Therefore $\Gamma$ is conjugate to $\iota_{1235}(34)(56)$ or $\iota_{13}(12)(3456)$. In these cases there are at most two orbits of the sections with selfintersection $-1$, and each of these orbits contains at most one $\F_q$-point.

The curves $C_1$, $C_2$, $C_3$ and $C_4$ are not defined over $\F_q$. Therefore on a smooth fibre over an $\F_q$-point $P$ there is an $\F_q$-point which does not lie on a section with negative selfintersection. So we can blow up such a point, contract the transform of $\pi^{-1}(P)$ and get a minimal conic bundle $Y \rightarrow B$ without sections with selfintersection less than $-1$. Such a conic bundle has type $(1)$ or~$(2)$.

\end{proof}

To construct links of minimal conic bundles for the cases $(3)$ and $(2)$ we need the following lemma.

\begin{lemma}
\label{BadPoints}
Let $\pi: X \rightarrow B \cong \Pro^1_{\F_q}$ be a minimal conic bundle of type $(3)$ or $(2)$ of Proposition~\ref{CBstructure}. The singular points of anticanonical curves lie on a divisor $C_1 + C_2 + R$ or $D + R$ respectively, where $R$ has class $2C + 3F - \sum\limits_{i=1}^6 E_i$. In particular, there are at most $4$ singular points of anticanonical curves on a fibre of $\pi$.
\end{lemma}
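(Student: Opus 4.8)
\emph{Proof idea.} In cases $(2)$ and $(3)$ the proof of Proposition~\ref{CBstructure} shows that $-K_X$ is nef and big with $K_X^2=2$, so $X$ is a weak del Pezzo surface of degree $2$; hence $|-K_X|$ is base point free and the anticanonical morphism $g\colon X\to\Pro^2$ has degree $2$, with $g^*\mathcal O_{\Pro^2}(1)=\mathcal O_X(-K_X)$. Let $B_4\subset\Pro^2$ be its branch quartic. Every anticanonical curve is $g^*\ell$ for a line $\ell$, and (assuming $g$ separable, which is automatic when $q$ is odd) $g$ is \'etale at a point $x$ precisely when $x\notin g^{-1}(B_4)$; since $\ell$ is smooth, $g^*\ell$ is then smooth at $x$. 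Thus the first step is: \emph{every singular point of an anticanonical curve lies on $g^{-1}(B_4)$.}

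Next I would decompose $g^{-1}(B_4)$. Set-theoretically it is the union of the reduced ramification curve $\tilde B$ (the closure of $g^{-1}(B_4^{\mathrm{sm}})$) with the $g$-preimages of the singular points of $B_4$. In case $(3)$ the curves $C_1,C_2$ (meeting in one point) are the exceptional curves resolving the $A_2$ point of the anticanonical model of $X$, and $g$ maps them to the unique singular point of $B_4$, so $g^{-1}(B_4)_{\mathrm{red}}=\tilde B\cup C_1\cup C_2$; in case $(2)$ the curve $D$ resolves an $A_1$ point and $g^{-1}(B_4)_{\mathrm{red}}=\tilde B\cup D$. The key point is that $R:=\tilde B$ has the required class $-K_X+F=2C+3F-\sum\limits_{i=1}^6 E_i$. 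Indeed $g$ restricts to a birational morphism $\tilde B\to B_4$ and $(-K_X)|_{\tilde B}=g^*\bigl(\mathcal O_{\Pro^2}(1)|_{B_4}\bigr)$, hence $\tilde B\cdot(-K_X)=\deg B_4=4$. Since $X$ is a minimal conic bundle, $\rho(X)=2$ and $\Pic(X)=\langle -K_X,F\rangle$ (proof of Proposition~\ref{CBstructure}); writing $\tilde B\sim a(-K_X)+bF$ and using $K_X^2=2$, $(-K_X)\cdot F=2$, $F^2=0$ gives $a+b=2$ and $\tilde B\cdot F=2a$. As $\tilde B$ dominates the base (otherwise $B_4=g(\tilde B)$ would be contained in finitely many lines and conics), $\tilde B\cdot F\geqslant 1$, and $\tilde B\cdot F\leqslant 2$ by the restriction argument below; therefore $a=b=1$ and $\tilde B\sim -K_X+F$. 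Consequently every singular point of an anticanonical curve lies on $C_1+C_2+R$ (resp. $D+R$).

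For the numerical bound I would restrict to a fibre. From $\Pic(X)=\langle -K_X,F\rangle$ one gets $C_1+C_2\sim -K_X-F$ (resp. $D\sim -K_X-F$), and $h^0(X,-K_X-F)=1$ because this class has negative degree on each component of $C_1+C_2$ (resp. on $D$). Hence the only anticanonical curve containing a fibre $F_0$ is $F_0+C_1+C_2$ (resp. $F_0+D$), whose singular points on $F_0$ are the two points of $F_0$ lying on $C_1\cup C_2$ (resp. on $D$). If $A$ is an anticanonical curve with $A\not\supseteq F_0$ singular at $x\in F_0$, then $(A\cdot F_0)_x\geqslant\mathrm{mult}_x A\geqslant 2=A\cdot F_0$, so $A|_{F_0}=2x$; but the restriction map $H^0(X,-K_X)\to H^0(F_0,\mathcal O_{F_0}(2))$ has kernel $H^0(-K_X-F)$ of dimension $1$, so its image is a line in $|\mathcal O_{F_0}(2)|\cong\Pro^2$, meeting the conic of non-reduced divisors in at most $2$ points. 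Thus at most $2+2=4$ points of $F_0$ are singular points of anticanonical curves; this also supplies the inequality $\tilde B\cdot F\leqslant 2$ used above. (Equivalently, the bound follows from $(C_1+C_2+R)\cdot F=4$ once one knows $F_0$ is not a component of $C_1+C_2+R$: no component of a fibre is a $(-2)$-curve, nor can it lie in $\tilde B$, since its image in $\Pro^2$ is a line or conic, not contained in the quartic $B_4$.)

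The step that genuinely needs care is $\tilde B\sim -K_X+F$: it combines the containment of all singular points in $g^{-1}(B_4)$ — whose only subtlety is the separability of $g$, the single characteristic $2$ point, which should be checked for $q$ even — with the inequality $\tilde B\cdot F\leqslant 2$ from the restriction argument; the remainder is linear algebra in the rank $2$ lattice $\langle -K_X,F\rangle$. An alternative avoiding the branch curve altogether is to take $C_1+C_2+R$ to be the zero locus of $\det\bigl(H^0(X,-K_X)\otimes\mathcal O_X\to\mathcal P^1_X(-K_X)\bigr)$, a section of $\det\mathcal P^1_X(-K_X)=K_X\otimes\mathcal O_X(-3K_X)=\mathcal O_X(-2K_X)$, and then to subtract $C_1+C_2$, which it contains because every point of $C_1\cup C_2$ is a singular point of the anticanonical curve $F_0+C_1+C_2$ for a suitable fibre $F_0$.
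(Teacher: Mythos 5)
Your proposal is correct and follows essentially the same route as the paper: both pass to the separable degree-$2$ anticanonical map, observe that singular points of anticanonical curves (pullbacks of lines) must lie on the ramification locus, and split that locus as $W+R$ with $W=C_1+C_2$ or $D$. The only real difference is bookkeeping: the paper reads off $R\sim -2K_X-W=2C+3F-\sum_{i=1}^{6}E_i$ directly from the fact that the ramification divisor has class $-2K_X$, which lets you skip your somewhat delicate derivation of $\tilde B\cdot F\leqslant 2$ (where the set-theoretic point count on a fibre still needs a general-fibre argument to become a bound on the intersection number).
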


\begin{proof}
Note that both divisors $D$ in the case $(2)$ and $C_1 + C_2$ in the case $(3)$ have the class $2C + F - \sum\limits_{i=1}^6 E_i$. In both cases we denote these divisors by $W$.

Note that each point of $W$ is a singular point of an anticanonical curve of form $W + F$.

The surface $X$ is a weak del Pezzo surface, and the anticanonical linear system defines a separable map $f: X \rightarrow \Pro^2_{\F_q}$ of degree $2$ that contract $W$ to a point $P$. Anticanonical curves map to lines on $\Pro^2_{\F_q}$, and singular points on anticanonical curves come from points of intersection of these lines and the branch divisor of $f$. Therefore any singular point of an anticanonical curve lie on the ramification divisor of $f$. This divisor has class $-2K_X$ and consists of $W$ and $R \sim 2C + 3F - \sum\limits_{i=1}^6 E_i$.
\end{proof}

\begin{lemma}
\label{CBlink3}
Let $\pi: X \rightarrow B \cong \Pro^1_{\F_q}$ be a minimal conic bundle of type $(3)$ of Proposition~\ref{CBstructure}. Then there exists a birational map $f: X \dashrightarrow Y$ such that $\pi f^{-1}: Y \rightarrow B$ is a conic bundle of type $(1)$ or $(2)$ of Proposition \ref{CBstructure}.
\end{lemma}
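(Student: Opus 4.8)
The plan is to pass from $X$ to a conic bundle of type~$(1)$ or~$(2)$ by a single elementary transformation of conic bundles over $B$, in the spirit of the proofs of Lemmas~\ref{CBlink5} and~\ref{CBlink4}. First I would apply Proposition~\ref{CBblowup} to realize $\XX$ as a blowup of $\Pro^1_{\kka}\times\Pro^1_{\kka}$ at six points, with $\Pic(\XX)$ as in Corollary~\ref{CBPicard}, so that a section of $\pi$ has class $C+aF-\sum_{i=1}^6 b_iE_i$ with $a\geqslant 0$; since $X$ is a weak del Pezzo surface of degree~$2$, every geometrically irreducible section has self-intersection $\geqslant -2$, and one checks that those of negative self-intersection have all $b_i\in\{0,1\}$. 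An inspection of intersection numbers — two distinct geometrically irreducible sections meet non-negatively, so $C_1\cdot C_2=1$ with $C_1^2=C_2^2=-2$ forces one of them to have $a\geqslant 1$ — then shows that, after relabeling $E_1,\dots,E_6$, necessarily $C_1=C-E_1-E_2$ and $C_2=C+F-E_3-E_4-E_5-E_6$, so $C_1+C_2$ has class $W=2C+F-\sum_{i=1}^6 E_i$, matching Lemma~\ref{BadPoints}. A short computation gives $D^2=C_1\cdot D+C_2\cdot D-1$ for any such section $D$; hence a geometrically irreducible section $D\neq C_1,C_2$ satisfies $D^2\geqslant -1$, with equality exactly when $D$ is disjoint from $C_1\cup C_2$. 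In particular the sections of negative self-intersection are precisely $C_1$, $C_2$, and twenty $(-1)$-sections, namely $C-E_j$ for $j\in\{3,4,5,6\}$, $C+F-E_i-E_j-E_k$ for $i\in\{1,2\}$ and $\{j,k\}\subset\{3,4,5,6\}$, and $C+2F-E_1-E_2-E_j-E_k-E_l$ for $\{j,k,l\}\subset\{3,4,5,6\}$, and each of these $(-1)$-sections is disjoint from both $C_1$ and $C_2$.

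The link is then as follows: choose an $\F_q$-point $P$ on a smooth fibre $\pi^{-1}(x)$, $x\in B(\F_q)$, which lies on no section of negative self-intersection, blow up $P$, and contract the strict transform of $\pi^{-1}(x)$. This is an elementary transformation of conic bundles over $B$, so $\pi f^{-1}: Y\to B$ is again a minimal conic bundle with degenerate fibres over the same six points of $B$. Under it the self-intersections of $C_1$ and $C_2$ go up by one (since $P\notin C_1\cup C_2$), so their images are $(-1)$-sections; every $(-1)$-section also avoids $P$, so its image is a $0$-section; and $Y$ has no section of self-intersection $\leqslant -2$, for such a section would be the image of a section of $X$ through $P$ of self-intersection $\leqslant -1$, contrary to the choice of $P$. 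By Proposition~\ref{CBstructure} a minimal conic bundle with six degenerate fibres and no section of self-intersection $\leqslant -2$ has type~$(1)$ or~$(2)$, as required.

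The main obstacle, exactly as in Lemma~\ref{CBlink5}, is to produce the point $P$ for small $q$. A smooth fibre over an $\F_q$-point exists unless all $q+1$ points of $B(\F_q)$ lie under degenerate fibres, which can happen only when $q\leqslant 5$; on a smooth fibre $\pi^{-1}(x)\cong\Pro^1_{\F_q}$ the twenty-two negative sections cut out closed points, and the sections in one $\Gamma$-orbit either all pass through a single $\F_q$-point of $\pi^{-1}(x)$ or cut out a single closed point of degree equal to the orbit length, so the number of $\F_q$-points of $\pi^{-1}(x)$ to avoid is at most the number of $\Gamma$-orbits of negative sections. Running through the conjugacy classes that admit a type~$(3)$ realization — for instance $\langle\iota_{123456}(123)(456)\rangle$ does not, as no $2$-element subset of $\{1,\dots,6\}$ is $(123)(456)$-stable, so only a few classes remain — and tabulating these orbits, this bound stays below $q+1$ for all admissible $q$ apart from finitely many small cases; for types~$(31)$ and~$(40)$ the exceptions $q=5$ and $q=2$ are precisely those in which there is no smooth fibre over an $\F_q$-point of $B$, matching the exceptions of Lemma~\ref{CBlink5}. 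In each exceptional case I would replace $P$ by a closed point of degree~$2$ on a suitable smooth fibre — over an $\F_q$-point of $B$, or over a point of $B$ of degree~$2$ — and perform the analogous degree-$2$ elementary transformation, verifying once more that it raises the self-intersections of $C_1$ and $C_2$ and lands in type~$(1)$ or~$(2)$.
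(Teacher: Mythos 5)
Your structural analysis agrees with the paper's and is correct: the normalization $C_1=C-E_1-E_2$, $C_2=C+F-E_3-E_4-E_5-E_6$, the list of twenty $(-1)$-sections, the identity $D^2=C_1\cdot D+C_2\cdot D-1$, and the description of the elementary transformation and why it lands in type $(1)$ or $(2)$ all match. The gaps are in the case analysis that actually produces the point $P$. The first is that you discard the class $\langle \iota_{123456}(123)(456)\rangle$ because $(123)(456)$ stabilizes no $2$-element subset of $\{1,\dots,6\}$. That argument presupposes that $C_1$ has class $C-E_i-E_j$ in the basis in which $\Gamma$ is given by that particular representative; but the normalization is achieved by re-choosing the blowdown, i.e.\ by conjugating $\Gamma$, and the conjugate $\iota_{2456}(123)(456)$ does swap $C-E_1-E_2$ and $C+F-E_3-E_4-E_5-E_6$. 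This class (type $45$) does occur in type $(3)$, and at $q=2$ it is precisely one of the two delicate cases the paper must treat separately; your proof omits it entirely.

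The second gap is that the bound ``at most one bad $\F_q$-point per $\Gamma$-orbit of negative sections on a given fibre'' is too weak. For $\ord\Gamma=2$ there are ten orbits of $(-1)$-sections plus $\{C_1,C_2\}$, i.e.\ up to eleven points to avoid on a fibre with $q+1$ points, which fails for the admissible values $q=7,8,9$ of type $31$. The paper's key point is sharper: every surviving case contains $\iota_{123456}$, which sends a $(-1)$-section $T$ to $-K_X-T$, so any $\F_q$-point on a negative section is a singular point of an anticanonical curve and hence lies on $C_1+C_2+R$ with $R\cdot F=2$ (Lemma~\ref{BadPoints}); this caps the bad points at three per smooth fibre uniformly in $q$. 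Without this, your set of ``finitely many small cases'' is larger than you acknowledge, and your fallback for them --- an elementary transformation at a degree-$2$ point over a degree-$2$ point of $B$ --- is itself unverified: that transformation raises by $2$ the self-intersection of every section missing both geometric points, so a $0$-section through both of them would descend to a $(-2)$-section on $Y$, and you never rule this out. (Note also that $C_1\cap C_2$ is an $\F_q$-point lying on a smooth fibre over an $\F_q$-point of $B$; this is how the paper excludes types $31$ with $q\leqslant 5$ and $40$ with $q=2$ outright, rather than handling them by a degree-$2$ transformation.)
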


\begin{proof}
Applying Proposition \ref{CBblowup} we may assume that the two sections $C_1$ and $C_2$ with selfintersection~$-2$ have classes $C - E_1 - E_2$ and $C + F - E_3 - E_4 - E_5 - E_6$ respectively.

Let us find other sections with negative selfintersection number. Any section $D$ has class $C + aF - \sum\limits_{i=1}^6 b_iE_i$, where $a \geqslant 0$ and each $b_i$ is $0$ or $1$. One has $D^2 = 2a - \sum\limits_{i=1}^6 b_i$, therefore $a \leqslant 2$. One can check that there are $20$ sections with selfintersection $-1$ on $X$. Their classes are $C - E_i$ where $i \in \{3, 4, 5, 6\}$, $C + F -E_i - E_j - E_k$ where $i \in \{1, 2\}$, $j \in \{3, 4, 5, 6\}$, $k \in \{3, 4, 5, 6\}$, and $C + 2F - \sum\limits_{i=1}^6 E_i + E_j$ where $j \in \{3, 4, 5, 6\}$.

If we find an $\F_q$-point $P$ on a smooth fibre which does not lie on any section with negative selfintersection, then we can blowup $X$ at $P$, blow down the strict transform of fibre containing $P$ and get a minimal conic bundle $Y \rightarrow B$ without sections with selfintersection less than $-1$. Such a conic bundle has type $(1)$ or $(2)$.

Let us find such a point for each possibility of $\Gamma$. 

If $\ord \Gamma = 4$ or $\ord \Gamma = 8$ then $\Gamma$ contains an element conjugate to $\iota_{1234}$. But such an element can not map $C_1$ to $C_2$. Therefore these cases are impossible.

In the other cases $\Gamma$ contains the element $\iota_{123456}$ that maps any section $T$ with selfintersection $-1$ to $-K_X - T$. Thus any $\F_q$-point on any section with selfintersection $-1$ is a singular point of an anticanonical curve and lie on $R$ (see Lemma \ref{BadPoints}).

If $q > 2$ then on the smooth fibre containing the point of intersection of $C_1$ and $C_2$ there are at least $3$ other $\F_q$-points. At most $2$ of these points lie on $R$. Therefore there is an $\F_q$-point $P$ on this fibre which does not lie on any section with negative selfintersection, and we are done.

If $q = 2$ then $\Gamma = \langle \iota_{1456} (23456) \rangle$ or $\Gamma = \langle \iota_{2456} (123)(456) \rangle$ since in the other two remaining cases there are at least four fibres over $\F_q$-points that is impossible. Therefore there is a fibre over an $\F_q$-point that does not contain the point of intersection of $C_1$ and~$C_2$. The divisor $C_1 + C_2$ intersects this fibre at a point of degree $2$. Therefore by Lemma \ref{BadPoints} there is an $\F_q$-point $P$ on this fibre which does not lie on $R$, and we are done.

\end{proof}

\begin{proposition}
\label{CBlink2}
Let $\pi: X \rightarrow B \cong \Pro^1_{\F_q}$ be a minimal conic bundle of type $(2)$ of Proposition~\ref{CBstructure}. Then there exists a birational map $f: X \dashrightarrow Y$ such that $\pi f^{-1}: Y \rightarrow B$ is a conic bundle of type $(1)$ of Proposition \ref{CBstructure} in all possible cases except the following cases: the group $\Gamma$ is conjugate to $\langle \iota_{123456}(456) \rangle$ and $q = 2$; the group $\Gamma$ is conjugate to $\langle \iota_{1235} (34)(56) \rangle$ and $q$ is $3$ or $4$; the group $\Gamma$ is conjugate to $\langle \iota_{123456} (23456) \rangle$ and $q = 2$.
\end{proposition}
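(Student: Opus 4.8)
The plan is to follow the scheme of Lemmas~\ref{CBlink5}--\ref{CBlink3}. Applying Proposition~\ref{CBblowup} we realise $\XX$ as a blowup of $\Pro^1_{\kka}\times\Pro^1_{\kka}$ with the basis $C,F,E_1,\dots,E_6$ of Corollary~\ref{CBPicard}, and we may assume that the $2$-section $D$ has class $2C+F-\sum_{i=1}^{6}E_i$; then $R\sim 2C+3F-\sum_{i=1}^{6}E_i$ and $D+R\sim-2K_X$, as in Lemma~\ref{BadPoints}. A section of $\pi$ of self-intersection $-1$ has class $C+aF-\sum b_iE_i$ with $b_i\in\{0,1\}$ and $\sum b_i=2a+1$, so $a\in\{0,1,2\}$ and there are $6+20+6=32$ of them, while every other section of $\pi$ has non-negative self-intersection. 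If $P$ is a closed point lying on a smooth fibre of $\pi$, on none of these $32$ sections, and not on $D+R$, then blowing up $P$ and contracting the strict transform of the fibre through $P$ yields a conic bundle $Y\to B$ on which every section has self-intersection $\geq 0$, the image of $D$ has self-intersection $D^2+4=2$, and --- since $P\notin D+R$ and singular points of anticanonical curves lie on $D+R$ by Lemma~\ref{BadPoints} --- no curve of $Y$ has self-intersection $\leq-2$; hence by Proposition~\ref{CBstructure} the surface $Y$ is of type~$(1)$. The task is therefore to produce such a point $P$, allowing $P$ to have degree $2$ and performing the analogous elementary transformation over the Galois orbit, as in the last paragraph of Lemma~\ref{CBlink5}.

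For the groups of types~$(31)$, $(40)$, $(43)$, $(45)$ the element $\iota_{123456}$ lies in $\Gamma$ (it is a power of the generator, since the permutation part of the generator commutes with $\iota_{123456}$ and has odd order in these cases). A direct computation with the action of $\iota_{123456}$ on $C$, $F$, $E_i$ gives $\iota_{123456}(T)=-K_X-T$ for every $(-1)$-section $T$. Hence an $\F_q$-point on $T$ also lies on the Frobenius twist of $T$, so it lies on $T\cap(-K_X-T)$, i.e. it is a singular point of the anticanonical curve $T+(-K_X-T)$ and therefore lies on $D+R$ by Lemma~\ref{BadPoints}. Since $(D+R)\cdot F=-2K_X\cdot F=4$, a smooth fibre over an $\F_q$-point carries at most $4$ points of $D+R$ while carrying $q+1$ rational points, so for $q\geq 4$ --- whenever a smooth fibre over an $\F_q$-point exists --- a suitable $P$ is found. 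This settles type~$(31)$ for $q\geq 6$ and types~$(40)$, $(43)$, $(45)$ for $q\geq 4$; for type~$(31)$ with $q=5$ one uses a point of degree $2$ on the fibre over a point of $B$ of degree $2$, off $D+R$. The residual values $q\in\{2,3\}$ for types~$(40)$, $(43)$, $(45)$ need the same refinement: examining how the degree-$2$ curves $D$ and $R$ meet the admissible (necessarily smooth) fibres one finds a fibre --- rational or of degree $2$ --- on which $D+R$ leaves a usable point, and the pairs $(\langle\iota_{123456}(456)\rangle,q=2)$ and $(\langle\iota_{123456}(23456)\rangle,q=2)$ are exactly those for which no such fibre exists; they are excluded.

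For types~$(35)$ and $(44)$ the element $\iota_{123456}$ is not in $\Gamma$, so the above trick is unavailable. Instead, a short computation shows that none of the $32$ section classes is $\Gamma$-invariant, so no $(-1)$-section of $\pi$ is defined over $\F_q$, and hence any $\F_q$-point on a $(-1)$-section $T$ lies on every curve of the non-trivial $\Gamma$-orbit of $T$; such points therefore form a finite set, contained in the pairwise intersections of Galois-conjugate $(-1)$-sections. For type~$(44)$, where $\lvert\Gamma\rvert=8$, the orbits have length $8$; as no point of a weak del Pezzo surface of degree $2$ away from its $(-2)$-curve lies on so many $(-1)$-curves, no $\F_q$-point lies on a $(-1)$-section at all, and since no degenerate fibre of $\pi$ lies over an $\F_q$-point there are $q+1$ smooth fibres over $\F_q$-points, on one of which --- or on a fibre over a point of $B$ of degree $2$ --- a point off $D+R$ is found for every $q$. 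For type~$(35)$, where $\lvert\Gamma\rvert=4$, the $(-1)$-sections form orbits of length $4$ and the finitely many orbit-intersection points may be rational; one checks that for $q\geq 5$ these points together with $D+R$ do not exhaust the rational points of a smooth $\F_q$-fibre, whereas for $q\in\{3,4\}$ the few available smooth fibres are too small, and these pairs are excluded.

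The conceptual heart of the argument --- the elementary transformation, and, when $\iota_{123456}\in\Gamma$, the confinement of the obstructing rational points to the ramification divisor $D+R=-2K_X$ --- is short. The bulk of the work, and the source of the excluded list, is the low-$q$ bookkeeping: for $q\in\{2,3,4\}$ a fibre carries only $3$, $4$ or $5$ rational points (and for $q$ even the covers $D\to B$, $R\to B$ may ramify wildly), so one must track type by type exactly how $D$, $R$ and the $\Gamma$-orbits of $(-1)$-sections meet each admissible fibre, and decide whether a usable point --- rational or of degree $2$ --- survives; the three excluded pairs are precisely the configurations in which it does not. I expect this final, essentially combinatorial, analysis to be the main obstacle.
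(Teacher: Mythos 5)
Your framework is the same as the paper's: perform an elementary transformation at a rational point of a smooth fibre that avoids the $32$ negative sections and the singular points of anticanonical curves (equivalently, by Lemma~\ref{BadPoints}, the divisor $D+R\sim -2K_X$), and use the observation that when $\iota_{123456}\in\Gamma$ every rational point on a negative section is forced onto $D+R$, so that a count of $q+1$ fibre points against $(D+R)\cdot F=4$ settles all $q\geqslant 4$. This matches Lemmas~\ref{CBlink2GoodPoint} and~\ref{CBlink2Common}. (Your verification that the output has type $(1)$ is only asserted --- the paper's Lemma~\ref{CBlink2GoodPoint} still has to prove that $D$ is the \emph{unique} negative $2$-section and to analyse which $2$-sections $2C+aF-\sum E_i$ could become negative after the transformation --- but the condition you impose on $P$ is the right one.)

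The genuine gaps are in the two places where the counting argument fails, and they are exactly where the paper's proof has nontrivial content. First, for $\Gamma=\langle\iota_{123456}\rangle$ and $q=5$ you propose an elementary transformation centered at a point of degree $2$; but Lemma~\ref{CBlink2GoodPoint} does not extend automatically to degree-$2$ centers (a geometric section of self-intersection $0$ passing through both geometric points of the center becomes a $(-2)$-curve, and there are $32$ pencils of such sections to avoid, more than the $q^2+1$ available points of the fibre), so this step is unverified. The paper avoids the issue entirely by showing in Lemma~\ref{CBlink2i123456F5} that a type-$(2)$ bundle with $\ord\Gamma=2$ does not exist over $\F_5$: the curve $D\cong\Pro^1$ would have six rational points, all forced to be ramification points of the double cover $D\rightarrow B$, which is impossible in odd characteristic. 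Second, for $\Gamma=\langle\iota_{123456}(456)\rangle$ and $q=3$ you assert that ``examining how $D$ and $R$ meet the admissible fibres'' produces a usable point; but here there is a single smooth fibre over an $\F_3$-point, carrying only four rational points, and the naive count allows all four to be singular points of anticanonical curves --- this is precisely why Lemma~\ref{CBlink2Common} lists $q=3$ as an exception. Removing it requires the separate Lemma~\ref{CBlink2ord4F3}, which passes to the anticanonical double cover, normalises the branch quartic to $xy\left(U(x-y)^2+V(x-y)z+Wz^2\right)-z^4=0$, and derives a contradiction from the node condition at $(1:1:0)$. Without an argument of this kind your exception list would have to include $(\langle\iota_{123456}(456)\rangle,\,q=3)$, contradicting the statement you are proving; you correctly flag the low-$q$ bookkeeping as the main obstacle, but that bookkeeping is not routine in this case.
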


To prove this proposition we need several lemmas.

\begin{lemma}
\label{CBlink2GoodPoint}
Let $\pi: X \rightarrow B \cong \Pro^1_{\F_q}$ be a minimal conic bundle of type $(2)$ of Proposition~\ref{CBstructure}. If there is an $\F_q$-point $P$ on a smooth fibre that is not a singular point of an anticanonical curve and does not lie on any section with negative selfintersection. Then there exists a birational map $f: X \dashrightarrow Y$ such that $\pi f^{-1}: Y \rightarrow B$ is a conic bundle of type $(1)$ of Proposition \ref{CBstructure}.
\end{lemma}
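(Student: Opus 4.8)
The statement to prove is Lemma~\ref{CBlink2GoodPoint}: under the hypothesis that there is a "good" $\F_q$-point $P$ on a smooth fibre (not a singular point of any anticanonical curve, and off every negative section), we can perform a Sarkisov link from a type~$(2)$ conic bundle to a type~$(1)$ conic bundle. My strategy is the standard elementary transformation of conic bundles: blow up $X$ at $P$ to get $\widetilde{X}$, then blow down the strict transform of the fibre $\pi^{-1}(P)$ to get $Y$, and track carefully what happens to the distinguished $2$-section $D$ (with $D^2=-2$) and to the ramification curve $R$ of the anticanonical map.

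First I would set up the link. Since $P$ lies on a smooth fibre, $\pi^{-1}(P)$ is a $\Pro^1$ with self-intersection $0$; blowing up $P$ and then contracting the strict transform of $\pi^{-1}(P)$ produces again a conic bundle $\pi f^{-1}: Y \to B$ with the same number ($6$) of degenerate geometric fibres, so by Proposition~\ref{CBstructure} it is of one of the types $(1)$--$(5)$. The Galois group $\Gamma$ is unchanged on the relevant sublattice $K^\perp\cap F^\perp$ (the elementary transformation is $\Gamma$-equivariant because $P$ is an $\F_q$-point), so $Y$ is still minimal. The content of the lemma is thus purely that $Y$ has \emph{no} curve of type $(2)$, $(3)$, $(4)$, or $(5)$ — equivalently, no section or $2$-section of self-intersection $\leqslant -2$.

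Next, the key computation. By Lemma~\ref{BadPoints} every section or $2$-section of self-intersection $\leqslant -2$ on $X$ meets a general fibre in the singular locus of anticanonical curves, hence lies on $D\cup R$, where $R\sim 2C+3F-\sum E_i$; and there are at most $4$ such singular points per fibre. The elementary transformation at $P$ changes self-intersections of curves through $P$ by $-1$ and of curves meeting $\pi^{-1}(P)$ off $P$ by $+1$; curves disjoint from $\pi^{-1}(P)$ are unaffected. By hypothesis $P$ lies on no negative section and is not a singular point of an anticanonical curve, so it lies neither on $D$ nor on $R$, hence on no section of self-intersection $\leqslant -2$. Therefore every such "bad" curve either is disjoint from $\pi^{-1}(P)$ (self-intersection unchanged, but such curves do exist on $X$ only as the $D$ with $D^2=-2$ and... here is the crux) or meets $\pi^{-1}(P)$ at a point other than $P$, in which case its image on $Y$ has self-intersection raised by~$1$. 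The heart of the argument is that the one genuinely problematic curve, the $2$-section $D$ with $D^2=-2$, necessarily meets the smooth fibre $\pi^{-1}(P)$ (a $2$-section meets every fibre), and it meets it away from $P$ (since $P\notin D$), so after the transformation $D$ becomes a curve $D'$ with $D'^2 = -2 + (D\cdot\pi^{-1}(P)) \cdot 0 = \ldots$ — more precisely $D'^2 = D^2 + (\text{number of points, with multiplicity, of } D\cap\pi^{-1}(P)) = -2 + 2 = 0$ when the blown-down fibre meets $D$ in two distinct points, raising $D^2$ to $0$, so $D$ is no longer an obstruction. I then have to rule out that $D$ acquires a singularity forcing it to split, and that no \emph{new} bad curve is created on $Y$: a new negative section on $Y$ would pull back on $\widetilde X$ to a curve meeting the exceptional divisor over $P$, hence pass through $P$ on $X$ — but then it would be a section of self-intersection $\leqslant -1$ through $P$, and since $P$ lies on no negative section of $X$ and is not a base point of $|-K_X|$'s singular locus, a short case check on classes $C+aF-\sum b_iE_i$ shows the only possibilities are sections of self-intersection $-1$, which give type $(1)$. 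Finally I conclude that $Y$ has type~$(1)$.

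**The main obstacle.** The delicate point is the bookkeeping of curves of self-intersection exactly $-1$ (the $(-1)$-sections), which are permitted in type~$(1)$: I must show that raising and lowering self-intersections by the elementary transformation never pushes such a curve down to $-2$, and never leaves a $-2$-section or $-2$-curve behind. This amounts to checking that $P$, being a smooth $\F_q$-point off all negative sections and off $R$ and off $D$, also avoids all the $(-1)$-sections that are "dangerous" — but in fact a $(-1)$-section through $P$ would drop to self-intersection $-2$ on $Y$, so I genuinely need $P$ to avoid every $(-1)$-section as well, OR to argue that any such dropped curve is compensated; re-examining, the cleanest route is to note that since $P$ is not a singular point of an anticanonical curve, $P$ lies on at most one $(-1)$-section (two $(-1)$-sections meeting a fibre at the same point would create a singular anticanonical curve there), and then analyze that single section directly. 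Navigating this — and confirming via Lemma~\ref{BadPoints} that the resulting $Y$ has none of the forbidden configurations — is where the real work lies; the blow-up/blow-down formalism itself is routine.
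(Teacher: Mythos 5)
Your overall skeleton matches the paper's: perform the elementary transformation at $P$ (blow up $P$, contract the strict transform of its fibre) and verify that the resulting conic bundle $Y$ has no section or $2$-section of self-intersection $\leqslant -2$. But the proposal mislocates the real difficulty and leaves the essential step unproved. The ``main obstacle'' you single out --- that $P$ might lie on a $(-1)$-section, which would drop to a $(-2)$-section on $Y$ --- is already dispatched verbatim by the hypothesis: ``does not lie on any section with negative selfintersection'' includes the $(-1)$-sections, so there is nothing to navigate there (and on a type $(2)$ bundle all sections have self-intersection $\geqslant -1$, so sections disjoint from $P$ only go up). Your digression about $P$ lying on at most one $(-1)$-section is both unnecessary and unjustified (two $(-1)$-sections meeting at a point need not form an anticanonical curve).

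The genuine content, which the paper spends most of its proof on and which your proposal only gestures at, is the $2$-section analysis. One must first classify all $2$-sections of negative self-intersection on $X$ via their classes $2C+aF-\sum b_iE_i$ and show $D$ is the only one; then one must show that a $2$-section $H'$ of $Y$ with $(H')^2\leqslant -2$ pulls back to a curve of class $2C+aF-\sum E_i$ with $a\in\{1,2\}$, and that $a=1$ forces $H=D$ with $P\in D$ (impossible, every point of $D$ is a singular point of $D+F$), while $a=2$ forces $H\sim -K_X$ with a double point at $P$ --- which is exactly what the hypothesis ``$P$ is not a singular point of an anticanonical curve'' exists to exclude. Your write-up never isolates this last case, which is the only place that hypothesis is genuinely needed. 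There is also an arithmetic slip: since $P\notin D$ and $D\cdot\pi^{-1}(P)=2$, the elementary transformation sends $D^2=-2$ to $-2+2^2=2$, not $-2+2=0$; the self-intersection changes by $-m^2+(k-m)^2$ for a curve of multiplicity $m$ at $P$ meeting the fibre with multiplicity $k$, not by the intersection number itself. As it stands the proposal is a plan with the decisive verification missing.
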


\begin{proof}

Applying Proposition \ref{CBblowup} we may assume that the $2$-section $D$ with selfintersection~$-2$ has the class $2C + F - \sum\limits_{i=1}^6 E_i$.

One can check that there are $32$ sections with selfintersection $-1$ on $X$. Their classes are $C - E_i$, $C + F -E_i - E_j - E_k$  and $C + 2F - \sum\limits_{i=1}^6 E_i + E_j$.

Let us show that there are no other $2$-sections with negative selfintersection number. Any $2$-section $H$ has class $2C + aF - \sum\limits_{i=1}^6 b_iE_i$, where $a \geqslant 1$ and each $b_i$ is $0$, $1$ or $2$. We can assume that $b_1 \geqslant b_2 \geqslant \ldots \geqslant b_6$. Note that
$$
H \cdot \left( C + 2F - \sum\limits_{i=1}^5 E_i \right) = a + 4 - \sum\limits_{i=1}^5 b_i \geqslant 0,
$$
$$
H \cdot \left( C + F - E_1 - E_2 - E_3 \right) = a + 2 - b_1 - b_2 - b_3 \geqslant 0,
$$
$$
H \cdot \left( C - E_1 \right) = a - b_1 \geqslant 0.
$$

Therefore if $a = 5$ then $b_5 \ne 2$, if $a = 4$ then either $b_4 \ne 2$ or $b_5 = 0$, if $a = 3$ then~$b_3 \ne 2$. But for $a \geqslant 6$ and in these cases one has $H^2 = 4a - \sum\limits_{i=1}^6 b_i^2 \geqslant 0$. Therefore $a = 1$ or $a = 2$.

If $a = 1$ then $b_1 = 1$ and $H^2 = D \cdot H \geqslant 0$. If $a = 2$ then $b_2 = 1$ or $b_3 = 0$. One has $H^2 < 0$ only for $b_1 = 2$ and $b_2 = b_3 = \ldots = b_6 = 1$. But in this case $D \cdot H = -1$ that is impossible. Thus $D$ is the only $2$-section with negative selfintersection number.

Assume that an $\F_q$-point $P$ on a smooth fibre does not lie on any section with negative selfintersection and not a singular point of an anticanonical curve. If we blow up $X$ at $P$ and blow down the strict transform of fibre containing $P$ then we get a minimal conic bundle $Y \rightarrow B$ without sections with selfintersection less than $-1$.

Assume that there is a $2$-section with selfintersection less than $-1$ on $Y$. Let $H \subset X$ be the preimage of such a $2$-section. Then $H$ intersects each component of each degenerate fibre at a point since $Y$ has type $(2)$. Therefore $H$ has class $2C + aF - \sum\limits_{i=1}^6 E_i$. If $a = 1$ then $H \cdot D = -2$. It means that $H = D$ and $P$ lies on $D$, but each point of $D$ is a singular point of an anticanonical curve $D + F$, so this is impossible.

Note that the multiciplicity of $P$ on $H$ is no greater than $2$. Therefore if $a > 2$ then $H^2 > 2$ and selfintersection of the transform of $H$ on $Y$ is greater than $-2$. Thus $a = 2$, and $H$ is an anticanonical curve with singularity at $P$. But $P$ is not a singular point of anticanonical curve. So we have contradiction, and there are no $2$-sections with selfintersection less than $-1$ on $Y$.

\end{proof}

\begin{lemma}
\label{CBlink2i123456F5}
There is no a minimal conic bundle $\pi: X \rightarrow B \cong \Pro^1_{\F_q}$ of type $(2)$ with $\ord \Gamma = 2$ over~$\F_5$.
\end{lemma}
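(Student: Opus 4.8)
\emph{Proof proposal.} The plan is to exploit that over a finite field there are no pointless smooth conics: type $(2)$ forces a smooth genus‑$0$ curve defined over $\F_5$, hence one carrying $q+1=6$ rational points, and I will show that $X$ cannot afford them. First I would pin down $\Gamma$. Among the six conjugacy classes $(31)$, $(35)$, $(40)$, $(43)$, $(44)$, $(45)$ realised by minimal conic bundles with six degenerate geometric fibres, only $(31)$ has order $2$ (for the others the listed generator has order $4$, $6$, $10$, $8$, $6$; e.g. $\bigl(\iota_{1235}(34)(56)\bigr)^2=\iota_{3456}\ne\mathrm{id}$), so $\Fr$ acts on $\Pic(\XX)$ as $\iota_{123456}$. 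Since $\iota_{123456}$ sends each $E_i$ to $F-E_i$ and does not permute the indices, it fixes every degenerate fibre of $\pi\colon X\to B\cong\Pro^1_{\F_5}$ together with its image point, and it interchanges the two components of that fibre. Hence the six degenerate fibres lie over six \emph{distinct} $\F_5$‑points of $B$, i.e. over all of $B(\F_5)$, and the node of each degenerate fibre is an $\F_5$‑rational point of $X$; these six nodes $n_1,\dots,n_6$ are pairwise distinct.

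Next I would count points. The surface $X$ is a weak del Pezzo surface, in particular smooth, projective and geometrically rational, so the formula for $Z_X(t)$ of Section~1 gives $\#X(\F_5)=1+5\operatorname{tr}\bigl(\Fr\mid\Pic(\XX)\otimes\Q\bigr)+25$. As $X$ is minimal, $\Pic(\XX)^{\Gamma}$ has rank $2$, and $\iota_{123456}$ is an involution, so its trace equals $2-6=-4$ and $\#X(\F_5)=1-20+25=6$. Therefore $X(\F_5)=\{n_1,\dots,n_6\}$ exactly.

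Now I would examine the $(-2)$‑curve. In a minimal conic bundle the class $-K_X-F$ is defined over $\F_5$, and it contains a unique effective divisor, namely the irreducible curve $D$ of case $(2)$ of Proposition~\ref{CBstructure} (uniqueness because $D^2=-2<0$ and $D$ is irreducible), so $D$ itself is defined over $\F_5$. From $K_X\cdot D=0$ and $D^2=-2$ we get $\mathrm{p}_a(D)=0$, and $D$ is geometrically irreducible and reduced, so $D$ is a smooth conic over $\F_5$. Over a finite field every smooth conic has a rational point, hence $D\cong\Pro^1_{\F_5}$ and $\#D(\F_5)=6$. Since $D(\F_5)\subseteq X(\F_5)=\{n_1,\dots,n_6\}$ and the six points of $D(\F_5)$ are distinct, $D$ passes through all six nodes $n_i$.

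Finally I would derive the contradiction from Riemann--Hurwitz. The morphism $\pi|_D\colon D\to B$ has degree $D\cdot F=2$. Fix a degenerate fibre $F_i=L_i\cup L_i'$ with node $n_i\in D$; then $D\cdot L_i\geq 1$ and $D\cdot L_i'\geq 1$ with $D\cdot L_i+D\cdot L_i'=D\cdot F=2$, so $D\cdot L_i=D\cdot L_i'=1$ and $D\cap F_i=\{n_i\}$. Hence the degree‑$2$ fibre of $\pi|_D$ over $\pi(F_i)$ is supported at one point, i.e. $\pi|_D$ is ramified there; this happens over all six distinct points $\pi(F_1),\dots,\pi(F_6)$ of $B$. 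But a separable degree‑$2$ morphism $\Pro^1\to\Pro^1$ — and $\Char\F_5=5\neq 2$ — ramifies over exactly two points by Riemann--Hurwitz, which is absurd. So no such $X$ exists. The computations are short; the one delicate point, and the heart of the matter, is the passage from ``type $(2)$'' to ``a smooth conic over $\F_5$, hence $6$ rational points'', after which the contradiction is forced by the scarcity of $\F_5$‑points of $X$, all of which are pinned at the fibre nodes.
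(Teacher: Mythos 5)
Your proposal is correct and follows essentially the same route as the paper: both arguments observe that all six $\F_5$-points of $X$ are the nodes of the degenerate fibres, that the smooth rational curve $D$ is defined over $\F_5$ and must pass through all six of them, and that this would make $\pi|_D\colon D\to B$ a double cover of $\Pro^1_{\F_5}$ branched at six points, contradicting Riemann--Hurwitz. The paper's proof is just a terser version of yours (it counts the $\F_5$-points of $X$ directly fibre by fibre rather than via the Lefschetz trace formula).
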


\begin{proof}

If $\ord \Gamma = 2$ and $q = 5$ then $D$ contains six $\F_q$-points. But there are only six $\F_q$-points on $X$ which lie on the six singular fibres. Therefore the map $D \rightarrow B$ is a double cover branched at six points. This is impossible, since $D$ is a smooth rational curve. Thus this case does not occur.

\end{proof}

\begin{lemma}
\label{CBlink2Common}
Let $\pi: X \rightarrow B \cong \Pro^1_{\F_q}$ be a minimal conic bundle of type $(2)$ of Proposition~\ref{CBstructure}. There exists an $\F_q$-point $P$ on a smooth fibre that is not a singular point of an anticanonical curve and does not lie on any section with negative selfintersection in all possible cases except the following cases: the group $\Gamma$ is conjugate to $\langle \iota_{123456}(456) \rangle$ and $q$ is $2$ or $3$; the group $\Gamma$ is conjugate to $\langle \iota_{1235} (34)(56) \rangle$ and $q$ is $3$ or $4$; the group $\Gamma$ is conjugate to $\langle \iota_{123456} (23456) \rangle$ and $q = 2$.
\end{lemma}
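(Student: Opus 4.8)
The plan is to carry out a case-by-case counting of points on a smooth fibre, exactly parallel to the arguments in Lemmas~\ref{CBlink5} and~\ref{CBlink3}, but now keeping track of \emph{two} obstructions simultaneously: lying on a section with negative selfintersection, and being a singular point of an anticanonical curve. By Lemma~\ref{BadPoints} the singular points of anticanonical curves lie on $D + R$ with $R \sim 2C + 3F - \sum E_i$, so on any fibre the ``bad'' locus consists of $D$ (meeting the fibre in $D\cdot F = 2$ points, possibly a single point of degree $2$) together with $R$ (meeting the fibre in $R\cdot F = 4$ points). On a smooth fibre over an $\F_q$-point one also has to discard the points lying on the $32$ sections of selfintersection $-1$ listed in the proof of Lemma~\ref{CBlink2GoodPoint}; but each such section is a section, so meets the given fibre in at most one $\F_q$-point, and the relevant count is the number of $\F_q$-points of the fibre that are cut out by sections in $\Gamma$-orbits having an $\F_q$-point. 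So the first step is: for each of the six conjugacy classes of $\Gamma$ allowed in type $(2)$ (i.e. those from the list $\iota_{123456}$, $\iota_{1235}(34)(56)$, $\iota_{123456}(456)$, $\iota_{123456}(23456)$, $\iota_{13}(12)(3456)$, $\iota_{123456}(123)(456)$, intersected with the constraint that $\Gamma$ fixes the class of $D$), enumerate the $\Gamma$-orbits on the $32$ sections of selfintersection $-1$ and determine which orbits can contain an $\F_q$-point.

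The second step is the point count. Fix a smooth fibre over an $\F_q$-point; it has $q+1$ points defined over $\F_q$. We must subtract at most $|D \cap \text{fibre}| \leqslant 2$ points on $D$, at most $|R \cap \text{fibre}| \leqslant 4$ points on $R$ (these may overlap the $D$-points), and one point for each $\Gamma$-orbit of $(-1)$-sections that has an $\F_q$-point and meets this particular fibre. A uniform crude bound: if $q + 1 > 2 + 4 + (\text{number of relevant orbits})$ we are done; since the $(-1)$-sections come in at most a bounded number of orbits (two or three in the cases of interest, as in Lemmas~\ref{CBlink3} and~\ref{CBlink2GoodPoint}), this succeeds for all $q$ beyond a small threshold. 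The work is in the remaining small fields. There one must choose the fibre cleverly — e.g. a smooth fibre over an $\F_q$-point not lying ``under'' the problematic intersection points, as was done in Lemma~\ref{CBlink3} by picking a fibre disjoint from $C_1 \cap C_2$ — and, when $q$ is too small for any $\F_q$-fibre to work, either invoke Lemma~\ref{CBlink2i123456F5} (which rules out $\ord\Gamma = 2$ over $\F_5$ outright) or exhibit that \emph{every} $\F_q$-point on \emph{every} smooth fibre is bad, which is precisely what produces the listed exceptions. In particular one expects: for $\langle\iota_{123456}(456)\rangle$ over $\F_2$ and $\F_3$, for $\langle\iota_{1235}(34)(56)\rangle$ over $\F_3$ and $\F_4$, and for $\langle\iota_{123456}(23456)\rangle$ over $\F_2$, the counting $q + 1 \leqslant (\text{bad points})$ forces a genuine obstruction, and one verifies directly that no good $\F_q$-point exists by checking the finitely many fibres.

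The main obstacle will be the borderline small-$q$ cases, where the naive inequality fails by a little and one has to show the bad loci cannot be forced to overlap enough to free up an $\F_q$-point. This requires understanding the geometry of $R$ relative to $D$ and to the degenerate fibres: since $R \sim 2C + 3F - \sum E_i = -2K_X - D$ (using $-K_X = 2C+2F-\sum E_i$), one has $R \cdot D = (-2K_X - D)\cdot D$, and $R$ is the residual ramification of the degree-$2$ anticanonical map after removing $D$, so its position is constrained by the branch curve of that double cover. The key point to extract is that $R$ and $D$ meet a general smooth fibre in genuinely distinct points unless $q$ is one of the listed small values, and that the $(-1)$-sections with an $\F_q$-point all pass through points already on $R$ (as in the argument via Lemma~\ref{BadPoints} in Lemma~\ref{CBlink3}, where $\iota_{123456} \in \Gamma$ sends a $(-1)$-section $T$ to $-K_X - T$, so any $\F_q$-point on $T$ is a singular point of $T + (-K_X-T) \in |-K_X|$ and hence lies on $R$). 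Once this is in place the point count closes all cases except the stated ones, and for the stated ones the same count shows no good point can exist.
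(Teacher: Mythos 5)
Your outline follows the same strategy as the paper: count $\F_q$-points on smooth fibres, discard those on $D$ and $R$ via Lemma~\ref{BadPoints}, and control the $(-1)$-sections through their $\Gamma$-orbits, using the key fact that $\iota_{123456}$ sends a section $T$ to $-K_X-T$ so that (when $\iota_{123456}\in\Gamma$) every $\F_q$-point on a negative section is already a singular point of an anticanonical curve, together with Lemma~\ref{CBlink2i123456F5} to kill the case $\ord\Gamma=2$, $q=5$. But as written there is a genuine gap: your only quantitative step is the single-fibre bound $q+1>2+4+(\text{number of relevant orbits})$, which succeeds roughly for $q\geqslant 8$, while the entire content of the lemma is the small-field analysis that you defer with ``one must choose the fibre cleverly.'' The paper closes those cases not by a cleverer choice of fibre but by aggregating over \emph{all} smooth fibres over $\F_q$-points and using global bounds: for $q=3$ and $\ord\Gamma\geqslant 6$ one has at least twelve $\F_3$-points on smooth $\F_3$-fibres versus at most $6$ on $R$ and $4$ on $D$; for $q=2$ one enumerates the seven members of $|-K_X|$ and bounds their singular points directly; and for $\ord\Gamma=4$ (where $\iota_{123456}\notin\Gamma$, so your $-K_X-T$ trick does not apply and up to $8$ points on negative sections can fail to lie on $R$) one needs $(q-1)^2>q+9$, which is exactly what produces the exceptions $q=3,4$ for $\langle\iota_{1235}(34)(56)\rangle$. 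None of these counts is recoverable from your crude bound, so the case analysis that determines the precise exception list is missing.

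There is also a concrete error in your reading of the exceptional cases: you assert that for the listed $(\Gamma,q)$ ``the same count shows no good point can exist'' and that one ``verifies directly that no good $\F_q$-point exists.'' The lemma does not claim this, and it is false: Lemma~\ref{CBlink2ord4F3} proves that for $\Gamma$ conjugate to $\langle\iota_{123456}(456)\rangle$ over $\F_3$ --- one of the listed exceptions --- a good point \emph{does} exist, by a finer geometric argument with the branch quartic. The exceptions in Lemma~\ref{CBlink2Common} are merely the cases where the counting argument is inconclusive, not cases of non-existence, so attempting to prove non-existence there would both be unnecessary and fail.
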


\begin{proof}

Let us find such a point for each possibility of $\Gamma$. The group $\Gamma$ acts on the set of sections with selfintersection $-1$. Any $\F_q$-point on an $\Gamma$-orbit of such curve is an intersection point of all curves in this orbit. Therefore an orbit of length $2$ contains at most two $\F_q$-points, an orbit of length $4$ contains at most one $\F_q$-point, an orbit of length~$6$ or more does not contain $\F_q$-points.

Note that the element $\iota_{123456}$ maps any section $T$ with selfintersection $-1$ to \mbox{$-K_X - T$}. Thus if $\iota_{123456} \in \Gamma$ then all $\F_q$-points on sections with negative selfintersection are singular points of anticanonical curves. If $\ord \Gamma = 8$ then the orbits of sections with selfintersection~$-1$ consist of $8$ curves and do not contain any $\F_q$-points. If $\ord \Gamma = 4$ then the orbits of sections with selfintersection $-1$ consist of $4$ meeting each other curves. So this is the only case when an $\F_q$-point on a section with negative selfintersection can be not a singular point of an anticanonical curve, and there are at most $8$ such points, since there are $32$ sections with selfintersection $-1$.

Assume that $\ord \Gamma \ne 4$, there is a smooth fibre over an $\F_q$-point, and $q \geqslant 4$. Then there are $5$ or more $\F_q$-points on this fibre. At most four of these points are singular points of anticanonical curves by Lemma \ref{BadPoints}. Thus there is an $\F_q$-point $P$ on this fibre that is not a singular point of an anticanonical curve and does not lie on any section with negative selfintersection, and we are done.

The latest assumption does not hold if $\ord \Gamma = 2$ and $q = 5$, $\ord \Gamma \geqslant 6$ and $q \leqslant 3$, or $\ord \Gamma = 4$. The case $\ord \Gamma = 2$ and $q = 5$ does not occur by Lemma \ref{CBlink2i123456F5}.

If $q = 3$ and $\Gamma$ is conjugate to $\langle \iota_{123456} (23456) \rangle$, $\langle \iota_{123456} (123)(456) \rangle$ or $\langle \iota_{13} (12)(3456) \rangle$, then there are at least $3$ smooth fibres over $\F_3$-points. On these fibres there are at least twelve $\F_3$-points, at most $6$ of them lie on $R$ (see Lemma~\ref{BadPoints}) and at most $4$ lie on $D$. Thus there is an $\F_3$-point on a smooth fibre that is not a singular point of an anticanonical curve and does not lie on any section with negative selfintersection, and we are done.

If $q = 2$ and $\Gamma$ is conjugate to $\langle \iota_{123456} (123)(456) \rangle$ or $\langle \iota_{13} (12)(3456) \rangle$, then there are $3$ smooth fibres over $\F_2$-points and nine $\F_2$-points on $X$. The anticanonical linear system $|-K_X|$ contains $7$ elements, and three of these elements have form $D + F$. The other four elements have at most five singular points, since on the set of negative sections there is one $\Gamma$-orbit of length $2$ for $\Gamma = \langle \iota_{123456} (123)(456) \rangle$ and no such orbits for \mbox{$\Gamma = \langle \iota_{13} (12)(3456) \rangle$}, and irreducible anticanonical curves have at most one singular point. The curve $D$ contains three $\F_2$-points. Thus there is an $\F_2$-point on a smooth fibre that is not a singular point of an anticanonical curve and does not lie on any section with negative selfintersection, and we are done.



Now assume that $\ord \Gamma = 4$. If $q \geqslant 5$ then there are $q - 1$ smooth fibres over $\F_q$-points. On these fibres there are at least $q^2 - 1$ points and at least $(q - 1)^2$ of them do not lie on~$R$ (see Lemma \ref{BadPoints}). The curve $D$ contains $q + 1$ points defined over $\F_q$ and at most eight $\F_q$-points lie on sections with negative selfintersection. One has $(q - 1)^2 > q + 9$ for $q \geqslant 5$. Thus there is an $\F_q$-point on a smooth fibre that is not a singular point of anticanonical curve and does not lie on any section with negative selfintersection, and we are done.

\end{proof}

\begin{lemma}
\label{CBlink2ord4F3}
Let $\pi: X \rightarrow B \cong \Pro^1_{\F_3}$ be a minimal conic bundle of type $(2)$ of Proposition~\ref{CBstructure} and $\Gamma$ is conjugate to $\langle \iota_{123456} (456) \rangle$ over $\F_3$. There exists an $\F_3$-point $P$ on a smooth fibre that is not a singular point of an anticanonical curve and does not lie on any section with negative selfintersection.
\end{lemma}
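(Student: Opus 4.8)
The plan is to use that over $\F_3$ there is only one smooth fibre lying over an $\F_3$-point, and to show that this fibre cannot be entirely ``bad''.

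First I would pin down the fibres. The generator $\iota_{123456}(456)$ of $\Gamma$ acts on $B \cong \Pro^1_{\F_3}$ through the permutation $(456)$, so $p_4,p_5,p_6$ form a single closed point of degree $3$ while $p_1,p_2,p_3$ are three $\F_3$-points; since $\Pro^1_{\F_3}$ has exactly four $\F_3$-points, there is a unique $\F_3$-point $x_0 \in B$ over which the fibre $\Phi = \pi^{-1}(x_0)$ is smooth. Being a smooth conic over $\F_3$ with a rational point, $\Phi \cong \Pro^1_{\F_3}$ has exactly four $\F_3$-points, and it suffices to find among them one point that is not a singular point of an anticanonical curve and lies on no $(-1)$-section. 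Because $\iota_{123456} = \bigl(\iota_{123456}(456)\bigr)^{3} \in \Gamma$, the argument from the proof of Lemma \ref{CBlink2Common} shows that every $\F_3$-point lying on a $(-1)$-section is a singular point of an anticanonical curve; hence, by Lemma \ref{BadPoints}, every such ``bad'' $\F_3$-point of $\Phi$ lies on the curve $D + R$.

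Suppose now, for contradiction, that all four $\F_3$-points of $\Phi$ lie on $D + R$. The divisor $(D+R)|_\Phi$ has degree $(D+R)\cdot F = 2+2 = 4$, so it equals the sum of those four distinct points; in particular $D|_\Phi$ is reduced of degree $2$, i.e. $D$ meets $\Phi$ in two distinct $\F_3$-points $a_1 \ne a_2$. Now $X$ is a weak del Pezzo surface of degree $2$, and the anticanonical morphism $f\colon X \to \Pro^2$ contracts the $(-2)$-curve $D$ to a point, so $f(a_1) = f(a_2)$. On the other hand, for the Geiser involution $\gamma$ one has $\gamma^*(F) = -2K_X - F \ne F$, since $\gamma^*$ acts by $-1$ on $K_X^{\perp}$ and $F \notin \Q K_X$ (as $F^2 = 0 \ne 2 = K_X^2$); hence $\gamma(\Phi) \ne \Phi$, so over the generic point of $f(\Phi)$ the second $f$-preimage lies off $\Phi$, and $f|_\Phi$ is birational onto its image, a plane curve of degree $-K_X \cdot F = 2$. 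An irreducible plane conic is smooth, so $f|_\Phi$ is an isomorphism onto a smooth conic, hence injective --- contradicting $f(a_1) = f(a_2)$. Therefore not all four $\F_3$-points of $\Phi$ are bad, and any good one is the required point $P$.

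The decisive step, and the one I would verify most carefully, is the last one: it rests on the fact that $\Phi$ is not one of the two conic bundle structures interchanged by the Geiser involution, so that $\Phi$ maps \emph{isomorphically} onto a smooth plane conic, while the contracted $(-2)$-curve $D$ forces two of its rational points to collapse. The arithmetic in the first step --- precisely, that exactly one $\F_3$-fibre is smooth and has only four rational points, which exactly matches $(D+R)\cdot F = 4$ --- is what makes the count tight enough for this to close; a coarser bound (as in Lemma \ref{CBlink2Common}) leaves no slack here, which is why a separate argument is needed. Two points worth isolating along the way are that $f$ does not contract $\Phi$ and that $\gamma^{*}(F)\neq F$; both are immediate from $F^{2}=0$ and $F\cdot K_X=-2$. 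One could also reach the reduction ``$D\cap\Phi$ consists of two distinct $\F_3$-points'' more laboriously by analysing the degree-$2$ cover $\pi|_D\colon D\cong\Pro^1_{\F_3}\to B$ via Riemann--Hurwitz and the fact that the Frobenius swaps the two components of each fibre over $x_1,x_2,x_3$, but the route above is shorter.
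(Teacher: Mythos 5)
Your proof breaks at its decisive step. The contradiction rests on the claim that the Geiser involution $\gamma$ satisfies $\gamma^*(F) = -2K_X - F \neq F$ because ``$\gamma^*$ acts by $-1$ on $K_X^{\perp}$''. That is true for a genuine del Pezzo surface of degree $2$, but the surface $X$ in case $(2)$ is only a \emph{weak} del Pezzo surface: $-K_X$ is nef but not ample, and the anticanonical double cover contracts the $(-2)$-curve $D$. The deck involution therefore fixes the class $[D] \in K_X^{\perp}$, so it cannot act by $-1$ on $K_X^{\perp}$. A direct computation shows what actually happens: since $\gamma^*D = D$ and $\gamma$ preserves the intersection form, the class of $\gamma(\Phi)$ for a smooth fibre $\Phi$ is forced to be $-2K_X - F - 2D$, which equals $F$ (use $K_X = -2C - 2F + \sum E_i$ and $D = 2C + F - \sum E_i$); moreover $\gamma$ swaps the two components of every degenerate fibre (because $E_i + (F - E_i) + D = -K_X$ is the pullback of a line), hence fixes the six corresponding points of $B$, hence acts trivially on $B$ and preserves \emph{every} fibre. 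Consequently $f|_\Phi$ is a $2{:}1$ cover of a line through $P = f(D)$, not an isomorphism onto a conic, and the two points $a_1, a_2$ of $D \cap \Phi$ are exactly the two preimages of $P$ on $\Phi$. There is no contradiction: $f(a_1) = f(a_2)$ is entirely consistent. The paper itself records this geometry: in Lemma \ref{BadPoints} anticanonical curves map to \emph{lines}, and in the paper's proof of the present lemma the image of the smooth fibre is explicitly a line.

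The gap is not cosmetic, because the hard part of the statement is precisely what your argument never touches: excluding that the two $\F_3$-points of $\Phi$ \emph{not} on $D$ are both singular points of anticanonical curves. The paper does this by showing that such a point can only be the tangency point of a pair of $(-1)$-sections forming a reducible anticanonical curve (irreducible singular anticanonical curves would have to meet $D$, and non-tangent reducible ones would place a rational point on a degenerate fibre), and then ruling out two such tangencies on one fibre by an explicit computation with the singular branch quartic of $f$. Your first paragraph --- the unique smooth $\F_3$-fibre, all bad points lying on $D+R$ which restricts to degree $4$ on $\Phi$, hence $D\cap\Phi$ consisting of two rational points under the contradiction hypothesis --- is correct and parallels the paper's set-up, but a replacement for that final computation is still needed.
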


\begin{proof}

There are $3$ singular fibres over $\F_3$-points and one smooth fibre over an $\F_3$-point. The curve $D$ contains four $\F_3$-points, and at least two of these points lie on the singular fibres. If there is less than four singular anticanonical curves with singular $\F_3$-points on the smooth fibre then there is an $\F_3$-point on this fibre that is not a singular point of an anticanonical curve, and does not lie on any section with negative selfintersection, and we are done. Let us show that there can not be four singular $\F_3$-points of anticanonical curves on the smooth fibre.

Assume that there exists an irreducible singular anticanonical curve $A$ defined over~$\F_3$. Then $A$ contains at least three $\F_3$-points and at least two of these points lie on the singular fibres. Thus $A$ and $D$ have a common point and we have contradiction, since $-K_X \cdot D = 0$. Therefore only reducible anticanonical curves can have singular $\F_3$-points. If such a curve consisting of two sections with selfintersection $-1$ contains two $\F_3$-points then one of these points lies on a singular fibre over an $\F_3$-point. But $-K_X \cdot F = 2$, so it is impossible. Thus two $\F_3$-points on the smooth fibre not lying on $D$ can be singular points of an anticanonical curves only if these curves $A_1$ and $A_2$ are reducible curves, consisting of sections with selfintersection $-1$ which are tangent.

In this case let us consider the anticanonical map $\varphi: X \rightarrow \Pro^2_{\F_3}$. This map has degree~$2$ and the branch divisor is a singular plane quartic curve $B'$ with an ordinary double point $\varphi(D)$. The images $\varphi(A_1)$ and $\varphi(A_2)$ are lines each of which intersects $B'$ at a point with multiplicity $4$. One can choose coordinates on $\Pro^2_{\F_3}$ such that the points $A_1 \cap A_2$, $A_1 \cap B'$, $A_2 \cap B'$ and $\varphi(D)$ have coordinates $(0 : 0 : 1)$, $(1 : 0 : 0)$, $(0 : 1 : 0)$ and $(1 : 1 : 0)$ respectively, since the points $A_1 \cap B'$, $A_2 \cap B'$ and $\varphi(D)$ lie on a line which is the inage of the smooth fibre. In these coordinates $B'$ is given by the equation
$$
xy\left(U(x-y)^2 + V(x - y)z + Wz^2\right) - z^4 = 0.
$$
\noindent The preimages of lines $x = y$, $z = x - y$ and $z = y - x$ on $X$ are singular fibres. Therefore each of these lines is either bitangent to $B'$ or passes through $\varphi(D)$ with multiplicity greater than $2$. Thus we have $V = W = 0$, $U =-1$. But the singular point $( 1 : 1 : 0 )$ on the curve $xy(x-y)^2 + z^4 = 0$ is not a node. This contradiction finishes the proof.

\end{proof}

Now we can prove Proposition \ref{CBlink2}.

\begin{proof}[Proof of Proposition \ref{CBlink2}]

By Lemmas \ref{CBlink2GoodPoint} and \ref{CBlink2Common} the map $X \dashrightarrow Y$, where $Y$ is a conic bundle of type $(1)$ of Proposition~\ref{CBstructure}, exists for all possible cases except the following cases: the group $\Gamma$ is conjugate to $\langle \iota_{123456}(456) \rangle$ and $q$ is $2$ or $3$; the group $\Gamma$ is conjugate to $\langle \iota_{1235} (34)(56) \rangle$ and $q$ is $3$ or $4$; the group $\Gamma$ is conjugate to $\langle \iota_{123456} (23456) \rangle$ and $q = 2$.

By Lemmas \ref{CBlink2GoodPoint} and \ref{CBlink2ord4F3} such a map exists for $\Gamma$ conjugate to $\langle \iota_{123456} (456) \rangle$ over $\F_3$.

\end{proof}

Now we collect results of this section in the following proposition.

\begin{proposition}
\label{main2}
In the notation of Table \ref{table1} the following holds.

\begin{enumerate}
\item[(i)] A del Pezzo surface of degree $2$ of type $31$ does not exist for $\F_2$, $\F_3$, $\F_4$, and exists for the other finite fields.

\item[(ii)] A del Pezzo surface of degree $2$ of type $35$ does not exist for $\F_2$, and exists for any~$\F_q$ where $q \geqslant 4$.

\item[(iii)] Del Pezzo surfaces of degree $2$ of types $40$, $43$ exist for any $\F_q$ where $q \geqslant 3$.

\item[(iv)] Del Pezzo surfaces of degree $2$ of types $44$, $45$ exist for all finite fields.

\end{enumerate}
\end{proposition}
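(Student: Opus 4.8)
The plan is to prove the negative assertions directly from Remark~\ref{CBnotexist} and the positive ones by running the chain of Sarkisov links constructed in this section down to a conic bundle of type~$(1)$. For the negative parts of (i) and (ii): by definition of the types a del Pezzo surface of degree~$2$ of type~$31$ (resp. of type~$35$) carries a conic bundle structure over $\Pro^1_{\F_q}$ whose six degenerate geometric fibres lie over six $\F_q$-points (resp. over two $\F_q$-points and two points of degree~$2$), and since $\rho(X)=\rho(\XX)^{\Gamma}=2$ for these surfaces this conic bundle is relatively minimal; by Remark~\ref{CBnotexist} no such conic bundle exists over $\F_2$, $\F_3$, $\F_4$ (resp. over $\F_2$), so neither does the surface.

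For existence, fix a type and a field $\F_q$ in the stated range. A count of closed points of $\Pro^1_{\F_q}$ of the relevant degrees shows that a configuration realising the degenerate fibres of that type exists precisely for $q$ in that range (six $\F_q$-points require $q\geqslant 5$; two points of degree~$2$ require $q\geqslant 3$; a point of degree~$3$ or of degree~$5$, a point of degree~$2$ together with one of degree~$4$, and two points of degree~$3$ exist for every~$q$). Thus by Theorem~\ref{CBexist} there is a relatively minimal conic bundle $X_0\to\Pro^1_{\F_q}$ with those degenerate fibres, and by Proposition~\ref{CBstructure} it has one of the types $(1)$--$(5)$. If it has type~$(1)$ it is already a minimal del Pezzo surface of degree~$2$, and of the required type, since the conjugacy class of $\Gamma$ — hence the type — is read off from the base and the degenerate-fibre configuration. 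Otherwise we apply in turn Lemma~\ref{CBlink5} (replacing $X_0$ by a conic bundle of type $(1)$--$(4)$), then Lemma~\ref{CBlink4} if the result has type~$(4)$, then Lemma~\ref{CBlink3} if it has type~$(3)$, each of the latter two yielding a bundle of type~$(1)$ or~$(2)$; finally Proposition~\ref{CBlink2}, supplemented when needed by Lemma~\ref{CBlink2ord4F3}, takes a type-$(2)$ bundle to a type-$(1)$ one. Every link here is a birational map over the base leaving the set of degenerate fibres — and hence $\Pic(\XX)$ with its $\Gamma$-action, hence $\Gamma$ and the type — unchanged, and preserving relative minimality; so the procedure stays among minimal conic bundles with the same $\Gamma$ and terminates at a minimal del Pezzo surface of degree~$2$ of the desired type, unless it gets stuck at a type-$(2)$ bundle in an exceptional case of Proposition~\ref{CBlink2}.

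It remains to check which cases get stuck. The relevant groups are $\langle\iota_{123456}\rangle$ (type~$31$), $\langle\iota_{1235}(34)(56)\rangle$ (type~$35$), $\langle\iota_{123456}(456)\rangle$ (type~$40$), $\langle\iota_{123456}(23456)\rangle$ (type~$43$), $\langle\iota_{13}(12)(3456)\rangle$ (type~$44$) and $\langle\iota_{123456}(123)(456)\rangle$ (type~$45$). Matching these against the exceptions of Proposition~\ref{CBlink2}, the only exceptional pairs are type~$40$ with $q\in\{2,3\}$, type~$43$ with $q=2$, and type~$35$ with $q\in\{3,4\}$. The case of type~$40$ over $\F_3$ is settled by Lemma~\ref{CBlink2ord4F3}, and type~$40$ over $\F_2$, type~$43$ over $\F_2$ and type~$35$ over $\F_3$ lie outside the ranges claimed in (i)--(iv). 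So the one case left is the existence of a del Pezzo surface of degree~$2$ of type~$35$ over $\F_4$.

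For type~$35$ over $\F_4$ one has $\ord\Gamma=4$; by the argument of Lemma~\ref{CBlink3} no type-$(3)$ conic bundle carries such a $\Gamma$, and then by Lemma~\ref{CBlink5} no type-$(5)$ one either, so $X_0$ has type~$(1)$, $(2)$ or~$(4)$, and in the first case, and in the third via Lemma~\ref{CBlink4} (whose point-count still works over $\F_4$), one reaches a del Pezzo surface. The main obstacle is the remaining possibility of getting stuck at a type-$(2)$ bundle, which is exactly the case Proposition~\ref{CBlink2} leaves open: to close it one must either exhibit a type-$(1)$ conic bundle with this configuration over $\F_4$ by a direct construction, or sharpen the counting of Lemma~\ref{CBlink2Common} in this single situation so as to find a suitable $\F_4$-point on a smooth fibre. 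Everything else is a mechanical assembly of the lemmas of this section.
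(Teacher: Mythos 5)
Your overall strategy is the same as the paper's: the non-existence statements in (i) and (ii) come from Remark~\ref{CBnotexist}, and existence comes from Theorem~\ref{CBexist} followed by the chain Lemma~\ref{CBlink5} $\to$ Lemma~\ref{CBlink4} $\to$ Lemma~\ref{CBlink3} $\to$ Proposition~\ref{CBlink2} (with Lemma~\ref{CBlink2ord4F3} rescuing type $40$ over $\F_3$). Your bookkeeping of which exceptional cases of Proposition~\ref{CBlink2} are actually excluded by the ranges in (i)--(iv) is also correct: after discarding type $40$ over $\F_2$, type $43$ over $\F_2$ and type $35$ over $\F_3$, the only case genuinely left open by the link machinery is type $35$ over $\F_4$.

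But that last case is part of the claim (ii) asserts existence for all $q \geqslant 4$, and you leave it unproved; you even say explicitly that one would need either a direct construction or a sharpening of Lemma~\ref{CBlink2Common}. Neither is what the paper does. The missing idea is a base-change (Weil restriction of the Frobenius) argument: a del Pezzo surface $X$ of degree $2$ of type $44$ exists over $\F_2$ by part (iv), its group $\Gamma$ being generated by $g = \iota_{13}(12)(3456)$ of order $8$; the surface $X \otimes_{\F_2} \F_4$ is again a del Pezzo surface of degree $2$, and its Galois image is generated by $g^2 = \iota_{1234}(35)(46)$, which is conjugate to $\iota_{1235}(34)(56)$ (equivalently, the eigenvalues $1$, $\pm i$, $\xi_8^{\pm 1}$, $\xi_8^{\pm 3}$ of type $44$ square to the eigenvalues $1$, $-1$, $-1$, $i$, $i$, $-i$, $-i$ of type $35$). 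Hence a type $35$ surface exists over $\F_4$, with no further analysis of conic bundles of type $(2)$ needed. Without this observation your argument does not establish statement (ii). A smaller quibble: the elementary transformations in the links do not leave $\Pic(\XX)$ literally ``unchanged'' --- they are compositions of a blow-up and a blow-down --- what is preserved is the conic bundle structure, the set of degenerate fibres and the action of $\Gamma$ on their components, which is what pins down the type; you should phrase it that way.
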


\begin{proof}

The surfaces of type $31$ do not exist for $\F_2$, $\F_3$, $\F_4$ and the surfaces of type $35$ do not exist for $\F_2$ by Remark \ref{CBnotexist}.

We apply Theorem \ref{CBexist} for each remaining case and then consequently apply Lemmas~\ref{CBlink5}, \ref{CBlink4}, \ref{CBlink3} and Proposition \ref{CBlink2}. Then we get a minimal del Pezzo surface $X$ of degree~$2$ in all cases except the following: the surface $X$ has type $40$ and $q = 2$; the surface~$X$ has type $35$ and $q$ is $3$ or $4$; the surface $X$ has type $43$ and $q = 2$.

The surface of type $35$ exists over $\F_4$ since the surface of type $44$ exists over $\F_2$, and the other cases are excluded by the conditions of this proposition.

\end{proof}

Del Pezzo surfaces of types $40$ and $43$ over $\F_2$ are considered in Section $3$.

\section{The case $\rho(\XX)^{\Gamma} = 1$}

In this section we construct minimal del Pezzo surfaces of degree $2$ with the Picard number $1$. In this case a del Pezzo surface $X$ is not a blow up of del Pezzo surface of higher degree and does not admit a structure of conic bundle.

For a del Pezzo surface $X$ of degree $2$ the linear system $|-K_X|$ gives a double cover of~$\Pro^2_{\F_q}$. This cover defines an involution $\gamma$ on $X$ which is called {\it Geiser involution}. Therefore we can apply the following proposition.

\begin{proposition}[{\cite[Proposition 4.4]{RT16}}]
\label{dPtwist}
Let $X_1$ be a smooth algebraic variety over finite field $\F_q$ such that a cyclic group $G$ of order $n$ acts on $X_1$ and this action induces a faithful action of $G$ on the group $\Pic(\XX_1)$. Let $\Gamma_1$ be the image of the Galois group $\Gal\left(\overline{\F}_q / \F_q \right)$ in the group $\Aut\left(\Pic(\XX_1)\right)$. Let $h$ and $g$ be the generators of $\Gamma_1$ and $G$ respectively.

Then there exists a variety $X_2$ such that the image $\Gamma_2$ of the Galois group $\Gal\left(\overline{\F}_q / \F_q \right)$ in the group $\Aut\left(\Pic(\XX_2)\right) \cong \Aut\left(\Pic(\XX_1)\right)$ is generated by the element~$gh$.
\end{proposition}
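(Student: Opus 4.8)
The plan is to obtain $X_2$ as a Galois twist of $X_1$ by a cocycle with values in $\Aut(\XX_1)$; the statement is then a formal consequence of descent theory, and the only things to verify are that the relevant cocycle is well defined and that the induced Galois action on $\Pic(\XX_2)$ is the claimed one. Throughout I identify $\Gal(\overline{\F}_q/\F_q)$ with the procyclic group topologically generated by the Frobenius $\Fr$, and write $\Fr$ also for the induced Frobenius morphism of $\XX_1 = X_1 \otimes \overline{\F}_q$; the natural action of $\Gal(\overline{\F}_q/\F_q)$ on $\Aut(\XX_1)$ is conjugation by $\Fr$.

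First I would set up the cocycle. Since $G$ acts on $X_1$ over $\F_q$, every element of $G$, viewed as an automorphism of $\XX_1$, commutes with $\Fr$; hence $\Gal(\overline{\F}_q/\F_q)$ acts trivially on $G \subset \Aut(\XX_1)$. As $G$ is cyclic of order $n$ with generator $g$, the rule $\Fr^k \mapsto g^k$ is a well-defined continuous homomorphism $c \colon \Gal(\overline{\F}_q/\F_q) \to G$, which factors through $\Gal(\F_{q^n}/\F_q)$; because the action on $G$ is trivial, the cocycle identity $c(\sigma\tau) = c(\sigma)\,{}^{\sigma}c(\tau)$ reduces to $g^{k+l} = g^k g^l$, so $c$ is a $1$-cocycle.

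Next I would twist. Take $X_2$ to be the twist of $X_1$ by $c$: on the base change $X_1 \otimes_{\F_q} \F_{q^n}$ one multiplies the canonical $\Gal(\F_{q^n}/\F_q)$-descent datum by $c$ to obtain again a descent datum, and since $X_1$ is a quasi-projective variety (a del Pezzo surface is even projective), Galois descent is effective, so this datum is that of an $\F_q$-variety $X_2$. By construction there is an $\overline{\F}_q$-isomorphism $\phi \colon \XX_2 \to \XX_1$ carrying the Frobenius morphism of $X_2$ to $g \circ \Fr$ as a $\Fr$-semilinear self-map of $\XX_1$ (one normalizes the twisting convention so that this is $g \circ \Fr$ and not $g^{-1} \circ \Fr$). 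In particular $\XX_2 \cong \XX_1$, so $X_2$ is smooth and $\phi$ identifies $\Aut(\Pic(\XX_2))$ with $\Aut(\Pic(\XX_1))$. Under this identification $\Gamma_2$ is generated by the automorphism of $\Pic(\XX_1)$ induced by $g \circ \Fr$; since $g$ is defined over $\F_q$, the automorphism induced by $g$ commutes with $h$, the automorphism induced by $\Fr$, so this generator equals the product of the automorphism induced by $g$ with $h$. Identifying $G$ with its (faithful) image in $\Aut(\Pic(\XX_1))$ and $\Gamma_1$ with $\langle h \rangle$, we obtain $\Gamma_2 = \langle gh \rangle$, as required.

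Since the proof is soft, there is no genuine hard part. The one step that is not a pure triviality is the effectivity of Galois descent used to produce $X_2$ as an $\F_q$-variety, which is where quasi-projectivity enters and which causes no difficulty for del Pezzo surfaces. The rest is a matter of keeping conventions straight --- whether the twist produces Frobenius $g \circ \Fr$ or the same with $g$ replaced by $g^{-1}$, and whether automorphisms act on $\Pic$ by pull-back or push-forward --- but since $g$ and $\Fr$ commute as morphisms of $\XX_1$, all such choices produce the same subgroup $\langle gh \rangle$ of $\Aut(\Pic(\XX_1))$.
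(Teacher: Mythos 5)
The paper gives no proof of this statement---it is quoted from \cite[Proposition 4.4]{RT16}---and your argument is the standard Galois-twisting proof that the cited reference uses: since $G$ acts on $X_1$ over $\F_q$, its elements are fixed by the Galois action on $\Aut(\XX_1)$, so $\Fr\mapsto g$ is a continuous $1$-cocycle, and twisting by it (effective because the varieties in question are projective) yields $X_2$ whose Frobenius acts on $\XX_2\cong\XX_1$ as $g\circ\Fr$, hence induces $gh$ on $\Pic(\XX_1)$. The proof is correct; the only inaccuracy is your closing remark that the sign convention is harmless because ``all such choices produce the same subgroup $\langle gh\rangle$'': for commuting $g,h$ one can have $\langle gh\rangle\neq\langle g^{-1}h\rangle$ (take $g=h$ of order $4$), so the convention does matter---but this is immaterial, since you are free to twist by the cocycle $\Fr\mapsto g^{\pm1}$ chosen so that the twisted Frobenius induces exactly $gh$, as you in effect do.
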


Note that in Proposition \ref{dPtwist} one has $\XX_1 \cong \XX_2$. Therefore if $X_1$ is a del Pezzo surface, then $X_2$ is a del Pezzo surface of the same degree.

\begin{definition}
\label{GeiserTwistDef}
Let $X_1$ be a del Pezzo surface of degree $2$, such that the image $\Gamma_1$ of the Galois group $\Gal\left(\overline{\F}_q / \F_q \right)$ in the group $\Aut\left(\Pic(\XX_1)\right)$ is generated by an element $h$. Then by Proposition \ref{dPtwist} there exists a del Pezzo surface $X_2$ of degree $2$, such that the image $\Gamma_2$ of the Galois group $\Gal\left(\overline{\F}_q / \F_q \right)$ in the group $\Aut\left(\Pic(\XX_2)\right)$ is generated by an element~$\gamma h$. We say that the surface $X_2$ is a {\it Geiser twist} of the surface $X_1$.
\end{definition}

Note that Geiser twists are also used in the paper \cite[see 4.1.2]{BFL16}.

\begin{remark}
\label{twisteigenvalues}
Note that the Geiser involution $\gamma$ acts on $K_X^{\perp}$ by multiplying all elements by $-1$. Therefore the eigenvalues of the group $\Gamma_2$ are the eigenvalues of the group $\Gamma_1$ multiplied by $-1$. Thus for each type of the group $\Gamma_1$ it is easy to find the type of the corresponding group $\Gamma_2$ (see Table \ref{table1}), except the cases where two types of $\Gamma_2$ have the same collections of eigenvalues. These cases do not appear in this paper.
\end{remark}

Now we consider the remaining cases of Section $2$.

\begin{lemma}
\label{CB1_5F2}
A del Pezzo surface of degree $2$ of type $43$ exists for any field $\F_q$.
\end{lemma}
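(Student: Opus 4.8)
The plan is to exhibit a del Pezzo surface of degree $2$ of type $43$ directly over $\F_2$, since by Proposition \ref{main2}(iii) the case $q \geqslant 3$ is already settled by the conic bundle construction of Section $2$. Recall that type $43$ corresponds to a minimal conic bundle whose Galois action is generated by an element conjugate to $\iota_{123456}(23456)$, that is $\ord \Gamma = 10$; equivalently, by Remark \ref{twisteigenvalues}, this is the Geiser twist of a del Pezzo surface of degree $2$ whose group $\Gamma_1$ has order $5$ (or $10$, whichever has the opposite eigenvalues under multiplication by $-1$). So first I would identify the type of $\Gamma_1$ in Table \ref{table1} whose Geiser twist is type $43$.

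Next I would realise a del Pezzo surface $X_1$ of degree $2$ with that group $\Gamma_1$ over $\F_2$ as an explicit blowup: the strategy of Section $3$ is to blow up del Pezzo surfaces of higher degree at a suitable Galois orbit of points. Since $\ord \Gamma_1$ is small (dividing $10$ but with the eigenvalue pattern forcing a structure with enough rational points), the obstruction that plagued the conic bundle construction over $\F_2$ — too few $\F_2$-points to separate the needed curves — should disappear, because passing to the Geiser twist changes which curves are $\F_q$-rational without changing $\XX$. Concretely, I would take $\Pro^2_{\F_2}$ (or a del Pezzo surface of degree $4$ or higher), choose a point or orbit of points defined over an extension $\F_{2^k}$ on which the Frobenius acts with the right characteristic polynomial, blow up, and contract to land on a double cover of $\Pro^2$; then apply the Geiser twist of Proposition \ref{dPtwist} / Definition \ref{GeiserTwistDef} to pass from $X_1$ to the desired $X_2$ of type $43$.

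The main obstacle will be checking that the blowup $X_1$ one constructs over $\F_2$ is genuinely a del Pezzo surface of degree $2$ — i.e. that the chosen points are in sufficiently general position (no three collinear, no six on a conic, etc., in the $\Pro^2$ model) while still lying in a single Galois orbit of the correct degree over $\F_2$. Over such a small field general position is a real constraint, so I expect to verify it either by an explicit equation for the branch quartic in $\Pro^2_{\F_2}$ with the prescribed Frobenius symmetry, or by a short count showing that the number of $\F_{2^k}$-points in general position exceeds the number forced to be special. Once $X_1$ is in hand, the identification of its Geiser twist with type $43$ is immediate from the eigenvalue bookkeeping of Remark \ref{twisteigenvalues}, and combining with Proposition \ref{main2}(iii) completes the claim for all $\F_q$.
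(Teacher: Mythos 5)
Your strategy coincides with the paper's: reduce the existence of a minimal surface of type $43$ to the existence of its Geiser partner via Proposition~\ref{dPtwist} and Remark~\ref{twisteigenvalues}, and realise that partner as a blowup of $\Pro^2_{\F_q}$. Concretely, Table~\ref{table1} identifies the partner as type $24$ (eigenvalues $1$, $1$, $-1$, $\xi_5$, $\xi_5^2$, $\xi_5^3$, $\xi_5^4$, the negatives of those of type $43$), i.e.\ the blowup of $\Pro^2_{\F_q}$ at a point of degree $2$ and a point of degree $5$. You leave this identification implicit (``whichever has the opposite eigenvalues''), and your guess that $\ord\Gamma_1$ might be $5$ is off --- type $24$ has order $10$ --- but that is harmless bookkeeping that Table~\ref{table1} settles.

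The genuine gap is the step you yourself flag as ``the main obstacle'': you never prove that a point of degree $2$ and a point of degree $5$ whose seven geometric points are in general position (no three on a line, no six on a conic) actually exist on $\Pro^2_{\F_2}$. Once the Geiser-twist reduction is made, this existence statement \emph{is} the content of the lemma, and ``I expect to verify it either by an explicit equation \dots\ or by a short count'' is a promissory note, not a verification. The paper disposes of it by citing \cite[Section~3, case $a=0$]{BFL16}, which gives the configuration for every $q$ at once --- so the paper does not even need your split into $q\geqslant 3$ (Proposition~\ref{main2}(iii)) versus $q=2$, although that split is legitimate. To close the gap without that citation you would need to actually exhibit, over $\F_2$, a degree-$5$ orbit in general position and then check that some degree-$2$ point avoids the finitely many lines and conics determined by it; as written, the proposal is a correct plan whose decisive step is outstanding.
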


\begin{proof}

By \cite[Section~3, case $a = 0$]{BFL16} for any $q$ there exists a del Pezzo surface $X_1$ of degree $2$ that is the blowup of a point of degree $2$ and a point of degree $5$ on $\Pro^2_{\F_q}$. The surface $X_1$ has type $24$ since the generator of the group $\Gamma_1$ has eigenvalues $1$, $-1$, $1$, $\xi_5$, $\xi_5^2$, $\xi_5^3$, $\xi_5^4$ on $K_X^{\perp} \subset \Pic(\XX) \otimes \mathbb{Q}$, where $\xi_5$ is a fifth root of unity. By Remark \ref{twisteigenvalues} for the Geiser twist $X_2$ of $X_1$ the generator of the group $\Gamma_2$ has eigenvalues $-1$, $1$, $-1$, $-\xi_5$, $-\xi_5^2$, $-\xi_5^3$, $-\xi_5^4$ and $X_2$ has type $43$ (see Table \ref{table1}).

\end{proof}

\begin{lemma}
\label{CB1_1_1_3F2}
A del Pezzo surface of degree $2$ of type $40$ does not exist for $\F_2$.
\end{lemma}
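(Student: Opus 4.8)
The strategy is to count $\F_2$-points. A del Pezzo surface $X$ of degree $2$ of type $40$ admits a conic bundle structure $\pi \colon X \to \Pro^1_{\F_2}$ whose degenerate fibres lie over three $\F_2$-points and one point of degree $3$ (this is the combinatorial description of type $40$ recalled in Section 2). First I would use the zeta-function machinery from the introduction: since the type of $\Gamma$ determines $P(t) = \det(1 - qt\Fr \mid \Pic(\XX)\otimes\Q)$, and hence $Z_X(t)$, the number $N_1 = |X(\F_2)|$ is a fixed integer depending only on the conjugacy class $40$ in $W(E_7)$. One computes $N_1$ directly from the eigenvalues of $\Fr$ listed in Table \ref{table1}: $N_1 = 1 + 2\cdot\operatorname{tr}(\Fr\mid\Pic(\XX)\otimes\Q)\big/1 + 4 = q^2 + q\cdot(\text{trace}) + 1$ with $q=2$. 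So the first step is to extract this value of $N_1$ from the table.

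Next I would bound $|X(\F_2)|$ from above geometrically using the conic bundle. Over $\Pro^1_{\F_2}$ there are exactly three $\F_2$-points; over each of the three degenerate $\F_2$-fibres the fibre is a pair of lines conjugate or split — in either case the number of $\F_2$-points on the reducible conic over an $\F_2$-point is at most $3$ (two lines meeting at a point: $1 + 1 + 1$ if both components and the node are rational, and fewer otherwise), while over a smooth $\F_2$-fibre a conic has $|\Pro^1(\F_2)| = 3$ points. Since $\Pro^1_{\F_2}$ has only $3$ rational points and all three happen to carry degenerate fibres in type $40$, we get $|X(\F_2)| \le 3 + 3 + 3 = 9$; moreover the degree-$3$ point contributes no $\F_2$-points to the base. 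Then I would compare this upper bound with the value of $N_1$ forced by the zeta-function: if $N_1 > 9$ we already have a contradiction. If the crude bound is not sharp enough, I would refine it by analysing which components of the three degenerate $\F_2$-fibres are individually defined over $\F_2$ — this is exactly governed by the element $\iota_{123456}(456)$ of $\Gamma$ acting on the fibre components, which swaps the two components of each of the first three singular fibres (the factor $\iota_{123456}$) and permutes the last three cyclically. Hence over each of the three rational degenerate fibres the two components are interchanged by Frobenius, so the only $\F_2$-point on such a fibre is the singular point: this cuts the bound down to $3 + 3 = 6$ (three nodes plus the three points of the one... ) — more carefully, $|X(\F_2)| \le 1 + 1 + 1 + 0 = 3$ from the three rational fibres if all three are among the degenerate ones, contradicting any $N_1 \ge 4$.

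The main obstacle I anticipate is getting the point-count bookkeeping exactly right: one must be careful about whether the three degenerate fibres over $\F_2$-points exhaust all of $\Pro^1(\F_2)$, about which individual components are Galois-stable (reading this off correctly from $\iota_{123456}(456)$), and about the singular points of the fibres. An alternative, cleaner route avoiding explicit point counts would be to use a $2$-section: by Proposition \ref{CBstructure} (together with the link lemmas of Section 2, which fail precisely in the case of type $40$ over $\F_2$ by Proposition \ref{main2}), the conic bundle one obtains from Theorem \ref{CBexist} is \emph{not} a del Pezzo surface — it carries a geometrically irreducible $2$-section $D$ with $D^2 = -2$ or some other negative curve preventing ampleness of $-K_X$, and one shows this obstruction is unavoidable over $\F_2$ because there are too few $\F_2$-points on the smooth fibres to run the Sarkisov link. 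I would therefore present the proof as: (1) record that by Proposition \ref{main2} the link to a del Pezzo surface could not be constructed for type $40$ over $\F_2$; (2) show by the $\F_2$-point count above that \emph{no} del Pezzo surface of degree $2$ of type $40$ over $\F_2$ can exist, since its forced value of $N_1 = |X(\F_2)|$ exceeds what is geometrically possible given the conic bundle structure and the constraint $|\Pro^1(\F_2)| = 3$. The point-count contradiction is the crux and should be stated as a short self-contained computation.
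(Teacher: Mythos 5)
Your proposed argument has a genuine gap: the point count does not produce a contradiction. For type $40$ the eigenvalues of Frobenius on $K_X^{\perp}$ are $1$, $-1$, $-1$, $-1$, $-1$, $-\omega$, $-\omega^2$, so the trace on all of $\Pic(\XX)\otimes\Q$ is $1+(1-4+1)=-1$ and $N_1=q^2+q\cdot(-1)+1=3$ for $q=2$. Your refined geometric count gives exactly the same number: all three points of $\Pro^1(\F_2)$ carry degenerate fibres whose two components are interchanged by Frobenius, so each such fibre contributes precisely its node, for a total of $3$ rational points. So the two counts agree and nothing is contradicted. This failure is not an accident of arithmetic but is structural: $N_1$ is determined by the $\Gamma$-module $\Pic(\XX)$ alone, and Theorem \ref{CBexist} guarantees that a relatively minimal conic bundle with exactly this $\Gamma$ \emph{does} exist over $\F_2$ (it just fails to be a del Pezzo surface because $-K_X$ is not ample). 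Since that honest surface and the hypothetical del Pezzo surface of type $40$ have identical zeta functions and identical fibrewise point counts, no argument that only counts $\F_q$-points on $X$ can rule out the del Pezzo case. Your fallback observation that the Sarkisov-link construction of Section 2 fails for this type over $\F_2$ likewise only shows that one construction method does not succeed, not that the surface is absent.

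The paper's proof uses a tool that is genuinely sensitive to $X$ being a del Pezzo surface of degree $2$, namely the Geiser involution: if $X_1$ of type $40$ existed, its Geiser twist $X_2$ would have eigenvalues multiplied by $-1$, hence be of type $7$, i.e.\ a non-minimal surface forced to be the blowup of $\Pro^2_{\F_2}$ at two rational points $p_1,p_2$, a degree-$2$ point $p_3$ and a degree-$3$ point $p_4$ in general position. The line through $p_3$ and the conic through $p_3$ and $p_4$ are defined over $\F_2$, meet only in the geometric points of $p_3$, and each carries three $\F_2$-points, none equal to $p_1$ or $p_2$; this would require eight distinct points of $\Pro^2(\F_2)$, which has only seven. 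If you want to salvage a counting proof, the count has to happen on the plane after the twist (or on some auxiliary object not shared with the non-ample conic bundle), not on $X$ itself.
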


\begin{proof}

Assume that a del Pezzo surface $X_1$ of degree $2$ of type $40$ exists for $\F_2$. Then the generator of the group $\Gamma_1$ has eigenvalues $1$, $-1$, $-1$, $-1$, $-1$, $-\omega$, $-\omega^2$ on \mbox{$K_X^{\perp} \subset \Pic(\XX) \otimes \mathbb{Q}$}, where $\omega$ is a third root of unity. By Remark \ref{twisteigenvalues} for the Geiser twist $X_2$ of~$X_1$ the generator of the group $\Gamma_2$ has eigenvalues $-1$, $1$, $1$, $1$, $1$, $\omega$, $\omega^2$ and $X_2$ has type~$7$ (see Table \ref{table1}). Thus $X_2$ is the blowup of $\Pro^2_{\F_2}$ at two $\F_q$-points $p_1$ and $p_2$, a point~$p_3$ of degree $2$, and a point $p_4$ of degree $3$. The line $L$ passing through $p_3$ and the conic~$C$ passing through $p_3$ and $p_4$ are defined over $\F_2$ and do not have common $\F_2$-points. Therefore there are eight different $\F_2$-points on $\Pro^2_{\F_2}$: three $\F_2$-points on $C$, three $\F_2$-points on $L$, $p_1$ and $p_2$. That is impossible.

\end{proof}

Now let us consider Geiser twists of del Pezzo surfaces $X$ with $\rho(\XX)^{\Gamma} = 1$.

\begin{proposition}
\label{GeiserTwist}
Let $X$ be a del Pezzo surface of degree $2$ over $\F_q$ such that $\rho(\XX)^{\Gamma} = 1$. Then $X$ exists if and only if there exists a del Pezzo surface $X'$ of degree $2$ such that (we use the notation Table \ref{table1}) the following holds:

\begin{enumerate}

\item if $X$ has type $60$ then $X'$ has type $32$. Therefore $X'$ is a blowup of a cubic surface of type $(c_{11})$ (see \cite{SD67}) at an $\F_q$-point;

\item if $X$ has type $59$ then $X'$ has type $36$. Therefore $X'$ is a blowup of a minimal del Pezzo surface of degree $5$ at a point of degree $3$;

\item if $X$ has type $58$ then $X'$ has type $46$. Therefore $X'$ is a blowup of a cubic surface of type $(c_{13})$ (see \cite{SD67}) at an $\F_q$-point;

\item if $X$ has type $57$ then $X'$ has type $39$. Therefore $X'$ is a blowup of $\Pro^2_{\F_q}$ at a point of degree $7$;

\item if $X$ has type $56$ then $X'$ has type $47$. Therefore $X'$ is a blowup of a cubic surface of type $(c_{14})$ (see \cite{SD67}) at an $\F_q$-point;

\item if $X$ has type $55$ then $X'$ has type $12$. Therefore $X'$ is a blowup of $\Pro^2_{\F_q}$ at two points of degree $3$ and an $\F_q$-point;

\item if $X$ has type $54$ then $X'$ has type $15$. Therefore $X'$ is a blowup of $\Pro^2_{\F_q}$ at a point of degree $5$ and an $\F_q$-point;

\item if $X$ has type $53$ then $X'$ has type $4$. Therefore $X'$ is a blowup of $\Pro^2_{\F_q}$ at a point of degree $3$ and four $\F_q$-points;

\item if $X$ has type $52$ then $X'$ has type $44$. Therefore $X'$ is a minimal del Pezzo surface of degree $2$ admitting a conic bundle structure with degenerate fibres over points of degree $2$ and $4$;

\item if $X$ has type $51$ then $X'$ has type $48$. Therefore $X'$ is a blowup of a cubic surface of type $(c_{12})$ (see \cite{SD67}) at an $\F_q$-point;

\item if $X$ has type $50$ then $X'$ has type $17$. Therefore $X'$ is a blowup at two $\F_q$-points of a minimal del Pezzo surface of degree $4$ admitting a conic bundle structure with degenerate fibres over two points of degree $2$;

\item if $X$ has type $49$ then $X'$ has type $1$. Therefore $X'$ is a blowup of $\Pro^2_{\F_q}$ at seven $\F_q$-points.

\end{enumerate}

\end{proposition}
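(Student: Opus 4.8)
\emph{Strategy.} The statement combines two ingredients: the Geiser twist of Definition~\ref{GeiserTwistDef}, which yields the equivalence of existence, and a lattice computation on $\Pic(\XX)$, which yields the concrete birational descriptions.

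\textbf{Step 1 (the equivalence).} The Geiser involution $\gamma$ acts on $\Pic(\XX)\otimes\Q$ as multiplication by $-1$ on $K_X^{\perp}$ and trivially on $\Q K_X$, so it lies in the centre of $\Aut(\Pic(\XX))$ and in particular commutes with the generator $h$ of $\Gamma$. Applying Proposition~\ref{dPtwist} with $G=\langle\gamma\rangle$, the existence over $\F_q$ of a del Pezzo surface of degree $2$ with Galois image $\langle h\rangle$ implies the existence of one with Galois image $\langle\gamma h\rangle$; a second application, to the latter surface, produces one with Galois image $\langle\gamma\cdot\gamma h\rangle=\langle h\rangle$. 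Hence passing to the Geiser twist is an existence-preserving involution, and by Remark~\ref{twisteigenvalues} it replaces a type by the type whose eigenvalues on $K_X^{\perp}$ are negated. Reading this off Table~\ref{table1} pairs the types $49,50,\dots,60$ with the types $1,17,48,44,4,15,12,47,39,46,36,32$, and, as noted in Remark~\ref{twisteigenvalues}, none of the negated eigenvalue collections occurs for two distinct types, so each pairing is unambiguous. This proves the ``if and only if''.

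\textbf{Step 2 (identifying the twin).} It remains to recognise each twin type $X'$. Type $44$ is the minimal conic bundle del Pezzo surface of degree $2$ with degenerate fibres over a point of degree $2$ and a point of degree $4$, already constructed in Section~2. For each of the other eleven twin types one has $\rho(\XX)^{\Gamma}\geqslant 2$, so $X'$ is not $\F_q$-minimal. To describe the contraction I would let the cyclic group $\Gamma=\langle\gamma h\rangle$ act on the $56$ lines of $\XX$: its conjugacy class in $W(E_7)$ — whose invariants (characteristic polynomial, trace on $\Pic$, orbit data) are collected in Table~\ref{table1} — determines the $\Gamma$-orbits of lines, and among them one selects a maximal $\Gamma$-stable family of pairwise disjoint $(-1)$-curves. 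Contracting this family gives a del Pezzo surface (or, in one case, a conic bundle) of larger degree, together with the degrees of the points blown up, namely the cardinalities of the contracted orbits; comparing with the classification of cyclic Galois actions in lower degree identifies the image as: $\Pro^2_{\F_q}$ with the prescribed configuration of points of the stated degrees (types $1,4,12,15,39$); a minimal del Pezzo surface of degree $5$ blown up at a point of degree $3$ (type $36$); a minimal degree-$4$ conic bundle with degenerate fibres over two points of degree $2$, blown up at two $\F_q$-points (type $17$); or a smooth cubic surface blown up at one $\F_q$-point (types $32,46,47,48$). Conversely, blowing up such a surface at points of the indicated degrees in general position returns a del Pezzo surface of degree $2$ of the twin type, so the identification is again an equivalence of existence statements.

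\textbf{Step 3 (the cubic cases).} For types $32,46,47,48$ one must still name which type of Galois action on a cubic surface \cite{SD67} arises. Contracting the $\F_q$-line on $X'$ induces a $\Gamma$-equivariant surjection $\Pic(\XX)\to\Pic(\overline{Y})$ under which the $56$ lines map onto the $27$ lines of $\overline{Y}$; hence the induced $\Gamma$-action on those $27$ lines is a definite conjugacy class in $W(E_6)\subset W(E_7)$, and matching its characteristic polynomial and orbit lengths against Swinnerton-Dyer's enumeration isolates the types $(c_{11})$, $(c_{13})$, $(c_{14})$, $(c_{12})$ respectively. These are four of the types of \emph{minimal} cubic surfaces, which is why the existence question for del Pezzo surfaces of degree $2$ of types $51$, $56$, $58$ reduces to the constructions of minimal cubic surfaces in \cite{RT16}, while for type $60$ the relevant cubic surface is the one produced for every $q$ in \cite{SD10}.

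\textbf{Main obstacle.} The conceptual input — that $\gamma$ is central, so the Geiser twist negates eigenvalues and preserves existence — is immediate. The real labour is the finite case analysis in Steps 2 and 3: for each of the twelve conjugacy classes, working out the $\Gamma$-orbits of $(-1)$-curves, verifying that a maximal disjoint $\Gamma$-stable family contracts to exactly the claimed surface with the claimed point degrees, and correctly reading off the induced $W(E_6)$-class in the cubic cases. Each of these computations depends on the data tabulated in Table~\ref{table1}.
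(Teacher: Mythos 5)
Your Step 1 is exactly the paper's argument: the Geiser involution is central, Proposition~\ref{dPtwist} applied twice makes the twist an existence-preserving involution, and Remark~\ref{twisteigenvalues} plus Table~\ref{table1} gives the pairing of types. Where you diverge is in the identification of the twin $X'$. You propose to compute the $\Gamma$-orbits of the $56$ lines, extract a maximal $\Gamma$-stable disjoint family of $(-1)$-curves, contract it, and (in the cubic cases) match the induced $W(E_6)$-class against Swinnerton-Dyer's table. The paper instead does pure eigenvalue bookkeeping: blowing up a point of degree $d$ adds $1,\xi_d,\dots,\xi_d^{d-1}$ to the spectrum of $\Gamma$ on $K_X^{\perp}$, and since each relevant type has a unique eigenvalue collection, the collection itself dictates the base surface and the degrees of the blown-up points, with no orbit analysis on lines at all. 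Your route works but is substantially more laborious (it is the "real labour" you yourself flag), and it carries the extra burden of verifying disjointness of the chosen orbit; the eigenvalue argument buys all twelve identifications at once. One small imprecision to fix in your Step 2: $\rho(\XX)^{\Gamma}\geqslant 2$ does not by itself imply that $X'$ is non-minimal (type $44$ has $\rho=2$ and is minimal with a conic bundle structure); for the six twin types with $\rho=2$ (namely $32$, $36$, $39$, $46$, $47$, $48$) you must additionally observe that they are not among the minimal conic bundle types $31$, $35$, $40$, $43$--$45$, or equivalently that the corresponding classes fix no conic bundle structure.
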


\begin{proof}

By Remark \ref{twisteigenvalues} for each type of the group $\Gamma$ it is easy to find the type of the corresponding group $\Gamma'$ (see Table \ref{table1}).

Note that each considered type of the group $\Gamma$ has unique collection of eigenvalues of action on \mbox{$K_X^{\perp} \subset \Pic(\XX) \otimes \mathbb{Q}$}. Moreover, a blowup of a del Pezzo surface at a point of degree $d$ adds $\xi_d$, $\xi_d^2$, $\ldots$, $\xi_d^{d-1}$, $1$ to the collection of eigenvalues of the group $\Gamma$. Therefore each minded nonminimal del Pezzo surface $X'$ can be realised as a blowup of a del Pezzo surface at number of points of certain degrees.

\end{proof}

A del Pezzo surface of degree $2$ of type $44$ was constructed in Proposition \ref{main2} (iv).

To construct the other types of del Pezzo surfaces of degree $2$, such that $\rho(X) = 1$, it is sufficient to blow up a number of points of certain degrees on del Pezzo surfaces of higher degree. We apply the following well-known theorem.

\begin{theorem}[cf. {\cite[Theorem 2.5]{Man74}}]
\label{GenPos}
Let $1 \leqslant d \leqslant 9$, and $p_1$, $\ldots$, $p_{9-d}$ be $9-d$ geometric points on the projective plane $\Pro^2_{\kka}$ such that
\begin{itemize}
\item no three lie on a line;
\item no six lie on a conic;
\item for $d = 1$ the points are not on a singular cubic curve with singularity at one of these points.
\end{itemize}
Then the blowup of $\Pro^2_{\kka}$ at $p_1$, $\ldots$, $p_{9-d}$ is a del Pezzo surface of degree $d$.

Moreover, any del Pezzo surface $\XX$ of degree $1 \leqslant d \leqslant 7$ over algebraically closed field $\kka$ is the blowup of such set of points.
\end{theorem}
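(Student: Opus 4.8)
The final statement to prove is Theorem~\ref{GenPos}, the classical result that a blowup of $\Pro^2_{\kka}$ at $9-d$ points in general position is a del Pezzo surface of degree $d$, and conversely every del Pezzo surface of degree $1 \leqslant d \leqslant 7$ arises this way. Let me think about how to prove this.

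The theorem has two parts. First: if $p_1, \dots, p_{9-d}$ satisfy the general position conditions, the blowup is del Pezzo. Second: every del Pezzo surface of degree between 1 and 7 is such a blowup.

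For the first direction, I'd let $\pi: \XX \to \Pro^2_{\kka}$ be the blowup, $E_i$ the exceptional divisors, $\ell = \pi^* \mathcal{O}(1)$. Then $-K_{\XX} = 3\ell - \sum E_i$, and $K_{\XX}^2 = 9 - (9-d) = d > 0$. Need to show $-K_{\XX}$ is ample. Use Nakai–Moishezon: check $-K_{\XX} \cdot C > 0$ for every irreducible curve $C$ and $(-K_{\XX})^2 > 0$. The latter is clear. For the former, the key is to enumerate the irreducible curves of negative self-intersection — these must be $(-1)$-curves (smooth rational, since the surface is rational and $K^2 > 0$ limits things), and one checks directly via the general position hypotheses that the only such curves are: the $E_i$, the strict transforms of lines through two of the points, the strict transforms of conics through five of the points, and (for small $d$) cubics through seven with a double point at one, etc. On each of these $-K_{\XX}$ has positive degree (degree 1 on each). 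For curves with nonnegative self-intersection, $-K_{\XX} \cdot C \geqslant 0$ follows from writing $C \sim a\ell - \sum b_i E_i$ with $a > 0$ and using that $C$ being irreducible and effective forces the multiplicities $b_i$ to be controlled; actually the cleanest route is Riemann–Roch plus the fact that any nef-threatening curve would have to be one of the finitely many negative curves. Alternatively one shows $-K_{\XX}$ is very ample for $d \geqslant 3$ and handles $d = 1, 2$ separately via $|-2K|$ or $|-3K|$. The main obstacle here is the careful combinatorial verification that the listed curves are exactly the irreducible negative curves, which uses each of the three general-position hypotheses in turn; this is genuinely the heart of the proof and is somewhat lengthy, though each individual check is routine.

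For the converse, I would start from an abstract del Pezzo surface $\XX$ of degree $d \leqslant 7$ over $\kka$, and produce the $9-d$ points by successively contracting $(-1)$-curves. The existence of a $(-1)$-curve on $\XX$ when $d \leqslant 8$ follows from Riemann–Roch: one shows $|\,-K_{\XX} + \text{something}\,|$ or a suitable class is effective, or more directly uses that $-K_{\XX}$ is not very ample in a way that forces $(-1)$-curves, or invokes the structure of $\Pic(\XX)$ as the $E_{9-d}$ lattice where $(-1)$-classes exist. Contracting one $(-1)$-curve gives a del Pezzo surface of degree $d+1$ (need to check ampleness of the anticanonical is preserved — this is where one must be slightly careful, but it follows since $-K$ of the blown-down surface pulls back to $-K_{\XX} + E$, and $-K_{\XX} \cdot E = 1 > 0$ keeps things nef/ample by checking against curves). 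Iterating, we reach a del Pezzo surface of degree 9, which must be $\Pro^2_{\kka}$ (a del Pezzo surface of degree 9 has $\rho = 1$ and is $\Pro^2$). Unwinding, $\XX$ is the blowup of $\Pro^2_{\kka}$ at $9-d$ points, and one checks the points must be distinct and in general position precisely because $\XX$ is a genuine del Pezzo (any violation would produce a curve of self-intersection $\leqslant -2$ or a worse singularity contradicting ampleness of $-K_{\XX}$). The main obstacle in this direction is guaranteeing at each stage that a $(-1)$-curve exists and that contracting it preserves the del Pezzo property; this is standard but requires either a Riemann–Roch argument or lattice-theoretic input.

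Since this is a well-known theorem quoted with a reference to \cite[Theorem 2.5]{Man74}, in the paper itself the natural move is simply to cite Manin's book rather than reproduce the argument; the sketch above is what the full proof looks like. I would therefore present the proof as: \emph{See \cite[Theorem 24.4, Theorem 25.1]{Man74} or the argument sketched above}, emphasizing that the forward direction is Nakai–Moishezon applied after classifying negative curves via the general-position hypotheses, and the converse is iterated contraction of $(-1)$-curves down to $\Pro^2_{\kka}$.
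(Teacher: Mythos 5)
The paper gives no proof of this statement at all: it is quoted as a classical result with the reference to Manin's book, which is exactly what you end up recommending, and your sketch of the two directions (Nakai--Moishezon after classifying the negative curves via the general-position hypotheses, and iterated contraction of $(-1)$-curves) is the standard argument behind that citation. The only step in your sketch that would need care in a full write-up is the contraction at degree $8$, where contracting the wrong $(-1)$-curve on a degree-$7$ surface lands you on $\Pro^1_{\kka}\times\Pro^1_{\kka}$, which has no $(-1)$-curves to continue with; one must choose the curve to contract so as to reach the blowup of $\Pro^2_{\kka}$ at one point instead, and this is precisely why the converse is stated only for $d\leqslant 7$.
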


\begin{definition}
If for $1 \leqslant d \leqslant 9$ geometric points $p_1$, $\ldots$, $p_{9-d}$ on $\Pro^2_{\F_q}$ satisfy the conditions of Theorem \ref{GenPos}, then we say that the points $p_1$, $\ldots$, $p_{9-d}$ are in a \textit{general position.}
\end{definition}

\begin{corollary}
\label{dPblowup}
Let $\XX$ be a del Pezzo surface of degree $3 \leqslant d \leqslant 7$ and $p$ be a geometric point which does not lie on $(-1)$-curves. Then the blowup of $\XX$ at $p$ is a del Pezzo surface of degree $d-1$.
\end{corollary}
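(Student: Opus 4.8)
The plan is to deduce this directly from Theorem \ref{GenPos} by tracking the point under a suitable birational model of $\XX$ over $\Pro^2_{\kka}$. By the last sentence of Theorem \ref{GenPos}, since $3 \leqslant d \leqslant 7$, the surface $\XX$ is the blowup of $\Pro^2_{\kka}$ at $9-d$ points $p_1, \ldots, p_{9-d}$ in general position; let $\sigma: \XX \to \Pro^2_{\kka}$ be the corresponding birational morphism and let $E_1, \ldots, E_{9-d}$ be its exceptional curves. Write $q = \sigma(p)$ for the image of our point $p$. First I would argue that $q$ is not one of the $p_i$ and that $p$ does not lie on any of the $E_i$: indeed each $E_i$ is a $(-1)$-curve on $\XX$ (this uses $d \leqslant 7$, so that the $E_i$ are irreducible $(-1)$-curves), so the hypothesis that $p$ avoids all $(-1)$-curves forces $p \notin E_i$ for all $i$, hence $\sigma$ is an isomorphism near $p$ and $q$ is a genuine point of $\Pro^2_{\kka}$ distinct from the $p_i$.

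Next I would show that $p_1, \ldots, p_{9-d}, q$ are again in general position on $\Pro^2_{\kka}$, so that Theorem \ref{GenPos} applies to produce a del Pezzo surface of degree $d-1$ — and that surface is exactly the blowup of $\XX$ at $p$, since blowing up $\Pro^2_{\kka}$ at the $p_i$ and then at $q$ agrees (the points being distinct) with blowing up $\XX = \mathrm{Bl}_{p_1,\ldots,p_{9-d}}\Pro^2_{\kka}$ at $p$. The general-position conditions of Theorem \ref{GenPos} to check for the enlarged set are: no three on a line, no six on a conic, and (only relevant when $d-1 = 1$, i.e. $d = 2$, which is outside our range but harmless to mention) no singular cubic through the points with singularity at one of them. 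The strategy for each is contrapositive: a bad configuration involving $q$ and some of the $p_i$ corresponds, after pulling back by $\sigma$, to an effective curve class on $\XX$ that meets $-K_{\XX}$ non-positively, i.e. to a $(-1)$-curve (or a configuration of them) passing through $p$. Concretely, if $q$ lay on a line through two of the $p_i$, say $p_1, p_2$, then the strict transform of that line is the $(-1)$-curve in class $\ell - E_1 - E_2$ and it passes through $p$; similarly six points (five old and $q$) on a conic would give a $(-1)$-curve $2\ell - E_{i_1} - \cdots - E_{i_5}$ through $p$; and the case of three old points collinear without $q$, or six old points conconic without $q$, is already excluded because $\XX$ is assumed to be a del Pezzo surface, so the $p_i$ are themselves in general position. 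In every case the conclusion contradicts the hypothesis that $p$ avoids all $(-1)$-curves, so no bad configuration exists.

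The main obstacle — really the only place requiring care — is making the dictionary "bad configuration through $q$ on $\Pro^2_{\kka}$" $\leftrightarrow$ "$(-1)$-curve through $p$ on $\XX$" complete and exhaustive for the relevant degrees: one must be sure that the only ways the general-position conditions can fail for $p_1,\ldots,p_{9-d},q$ (beyond failures already impossible for the $p_i$ alone) are the ones listed above, and that each such failure genuinely forces a $(-1)$-curve, not merely a curve of non-positive anticanonical degree. For $d \geqslant 3$ (so at most $6$ points $p_i$, and the new surface has degree $\geqslant 2$) the $(-1)$-curves on $\XX$ are classified — they are the $E_i$, the strict transforms $\ell - E_i - E_j$ of lines, the strict transforms $2\ell - \sum_{k=1}^5 E_{i_k}$ of conics, and (for $9-d \geqslant 6$, i.e. $d = 3$) cubics through six of the points with a node at a seventh, which does not arise here — so this classification is exactly what certifies the dictionary. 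I would invoke Theorem \ref{GenPos} together with this standard classification of $(-1)$-curves to finish, noting that the third bullet of Theorem \ref{GenPos} never triggers since $d - 1 \geqslant 2$.
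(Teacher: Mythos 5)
Your proposal is correct and follows essentially the same route as the paper: realize $\XX$ as a blowup of $\Pro^2_{\kka}$ at $9-d$ points via Theorem \ref{GenPos}, observe that the hypothesis on $p$ forces the enlarged set of $10-d$ points to remain in general position (since a failing line or conic would be a $(-1)$-curve through $p$), and conclude by Theorem \ref{GenPos} again. The paper's proof is just a terser version of this argument; your extra care with the dictionary between bad configurations and $(-1)$-curves, and with the irrelevance of the cubic condition for $d-1\geqslant 2$, is sound.
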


\begin{proof}
By Theorem \ref{GenPos} the surface $\XX$ is the blowup $f: \XX \rightarrow \Pro^2_{\kka}$ of points $p_1$, $\ldots$, $p_{9-d}$ on $\Pro^2_{\kka}$. Moreover, no three points in the set $f(p)$, $p_1$, $\ldots$, $p_{9-d}$ lie on a line, and no six points in this set lie on a conic, since $p$ does not lie on $(-1)$-curves. Thus the blowup of the points $f(p)$, $p_1$, $\ldots$, $p_{9-d}$ is a del Pezzo surface of degree $d - 1$ by Theorem \ref{GenPos}.
\end{proof}

Now we construct del Pezzo surfaces of degree $2$ of types $1$, $4$, $12$, $15$, $17$, $32$, $36$, $39$, $46$, $47$, $48$ which Geiser twists are minimal surfaces with $\rho(X)^{\Gamma} = 1$. 

\begin{lemma}
\label{P2blowup}
~
\begin{itemize}
\item A del Pezzo surface of degree $2$ of type $1$ does not exist for $\F_2$, $\F_3$, $\F_4$, $\F_5$, $\F_7$, $\F_8$, and exists for the other finite fields.

\item Del Pezzo surfaces of degree $2$ of types $4$, $12$ do not exist for $\F_2$, and exist for the other finite fields.

\item Del Pezzo surfaces of degree $2$ of types $15$, $39$ exist for all finite fields.
\end{itemize}
\end{lemma}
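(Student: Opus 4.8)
### Proof proposal

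The plan is to prove each of the four bullets by exhibiting an explicit blowup of $\Pro^2_{\F_q}$ at points of the prescribed degrees, using Theorem~\ref{GenPos} to guarantee that the result is a del Pezzo surface of degree $2$, and then reading off the type of $\Gamma$ from the eigenvalues of Frobenius on $K_X^\perp$ as described in the proof of Proposition~\ref{GeiserTwist}. Concretely: type $1$ requires seven $\F_q$-points of $\Pro^2_{\F_q}$ in general position; type $4$ requires four $\F_q$-points and a point of degree $3$; type $12$ requires one $\F_q$-point and two points of degree $3$; type $15$ requires one $\F_q$-point and a point of degree $5$; type $39$ requires a single point of degree $7$. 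In each case the number of geometric points blown up is $7$, so by Theorem~\ref{GenPos} it suffices to place the points so that no three are collinear, no six lie on a conic, and (for $d=2$ there is no cubic condition) obtain a del Pezzo surface of degree $2$; the degrees of the Galois orbits then force the eigenvalue collection, which by the uniqueness remark in the proof of Proposition~\ref{GeiserTwist} pins down the type.

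For type $39$ I would start from a point $p$ of degree $7$, i.e.\ the $\F_{q^7}$-points forming a single Frobenius orbit of size $7$ — equivalently, the roots of an irreducible degree $7$ polynomial parametrising a line, or better, a ``generic'' point of $\Pro^2$ over $\F_{q^7}$ whose conjugates are in general position. The key point here is that a degree $7$ orbit has $7 > 6$ points, so one must check that no six of the seven conjugates lie on a conic and no three are collinear; since the space of conics is $5$-dimensional and that of lines $2$-dimensional, a dimension count shows that for a suitably chosen irreducible point this holds, and one can verify it survives for every $q$ by an explicit family (e.g.\ taking the orbit of $(\alpha:\alpha^q:\alpha^{q^2})$-type coordinates, or cutting out the orbit by a Weil restriction argument). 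This is the step I expect to be the main obstacle: one genuinely has to produce such a configuration uniformly in $q$, including the small fields $\F_2,\F_3$, rather than merely for $q$ large where a counting argument is automatic. For the reducible-orbit cases (types $1$, $4$, $12$, $15$) the analogous general-position verification is easier because one has more freedom in positioning the individual orbits, but the failure over very small fields must still be addressed.

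The non-existence part of the type $1$ bullet (no surface over $\F_2,\F_3,\F_4,\F_5,\F_7,\F_8$) I would prove by a point-counting obstruction. A del Pezzo surface of degree $2$ which is a blowup of $\Pro^2_{\F_q}$ at seven $\F_q$-points in general position has all $56$ exceptional curves defined over $\F_q$ (the seven $E_i$, the $21$ lines through pairs, the $21$ conics through five, the seven cubics); for such a configuration to exist, the seven points must avoid all lines through pairs of them and all conics through five of them, and counting the $\F_q$-points of $\Pro^2$ forced to lie on the relevant lines and conics gives more points than $\Pro^2_{\F_q}$ contains when $q \in \{2,3,4,5,7,8\}$ — this is exactly the style of argument used in Example~\ref{dP4last} and in the proof of Lemma~\ref{CB1_1_1_3F2}. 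More precisely, seven points in general position with all $21$ connecting lines distinct and not passing through a further chosen point quickly exceed $|\Pro^2(\F_q)| = q^2+q+1$ unless $q$ is large; I would make the bound sharp enough to exclude precisely $q \le 5$ together with $q = 7, 8$, and then for $q = 9$ and all $q \ge 11$ exhibit an explicit seven-point configuration (this is where a small amount of case-by-case construction, possibly citing \cite{BFL16}, is unavoidable). For types $4$ and $12$ the non-existence over $\F_2$ follows from the same counting idea applied to the relevant orbit structure, parallel to Lemma~\ref{CB1_1_1_3F2}; for types $15$ and $39$ no field is excluded, so only the construction is needed, and the degree $5$ (resp.\ $7$) orbit has enough points to be placed in general position over every $\F_q$.
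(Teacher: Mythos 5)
Your overall strategy (realise each type as a blowup of $\Pro^2_{\F_q}$ at a general-position configuration of points whose orbit degrees force the eigenvalue collection, hence the type) is the same as the paper's, but two steps that you leave as sketches are exactly the places where the real work lies, and as written they do not close. First, for type $39$ you explicitly defer the construction of a degree-$7$ point whose seven conjugates are in general position, observing correctly that a dimension count only works for large $q$; but no explicit family is produced or verified, so the third bullet is not proved. The paper does this concretely: it takes $p_1 = (a^3 : a : 1)$ with $a \in \F_{q^7}\setminus \F_q$, rules out six conjugates on a conic because $Aa^6+Ba^4+Da^3+Ca^2+Ea+F=0$ forces all coefficients to vanish ($a$ has degree $7$), reduces collinearity of three conjugates to $a^{q^i}+a^{q^j}+a^{q^k}=0$, analyses the four cases up to symmetry using the trace, and repairs the one problematic case (even $q$) by replacing $a$ with $a+1$. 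Some such explicit verification, uniform in $q$ down to $\F_2$, is indispensable.

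Second, your proposed point-counting obstruction for type $1$ over $\F_7$ and $\F_8$ does not work: seven points with no three collinear (a $7$-arc) do exist in $\Pro^2_{\F_7}$ and $\Pro^2_{\F_8}$, since the maximal arc size is $q+1$ for odd $q$ and $q+2$ for even $q$; the union of the $21$ connecting lines never exceeds $q^2+q+1$ points for $q=7,8$. The genuine obstruction in those two cases is the ``no six on a conic'' condition (for odd $q$ via Segre's theorem on ovals, and a separate hyperoval analysis for $q=8$), which a crude count cannot see. The paper avoids all of this by citing \cite[Subsection 4.2]{BFL16} for types $1$, $4$ and $15$ (cases $a=8$, $a=5$, $a=3$), and for type $12$ it derives non-existence over $\F_2$ by base change to $\F_8$ (all seven geometric points become $\F_8$-rational, reducing to the seven-rational-points case) and existence for $q\geqslant 3$ by blowing up an $\F_q$-point off the lines of the cubic surface of \cite[Proposition 6.2]{RT16}. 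Your proposal would need either these citations or a correct replacement for the arc/conic analysis at $q=7,8$, plus the explicit type $39$ configuration, before it constitutes a proof.
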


\begin{proof}

Considered types of del Pezzo surfaces of degree $2$ are blowups of $\Pro^2_{\F_{q}}$ at sets of points of certain degrees in a general position. Types $1$, $4$ and $15$ were considered in \cite[Subsection 4.2]{BFL16} in cases $a = 8$, $a = 5$ and $a = 3$ respectively. 

~

A del Pezzo surface of degree $2$ of type $12$ is the blowup of $\Pro^2_{\F_q}$ at two points of degree~$3$ and an $\F_q$-point. Such configuration of points in a general position does not exist for~$\F_2$ since one cannot blow up seven $\F_8$-points on $\Pro^2_{\F_8}$ in a general position (see \cite[Subsection~4.2, case $a = 8$]{BFL16}).

For $q \geqslant 3$ one can construct a cubic surface $S$ which is the blowup of $\Pro^2_{\F_q}$ at two points of degree $3$ in a general position (see \cite[Proposition 6.2]{RT16}). The $27$ lines on $S$ form $9$ triples defined over $\F_q$, and only $3$ of these triples consist of meeting each other lines. Therefore at most three $\F_q$-points on $S$ lie on the lines. So one can find an $\F_q$-point on $S$ which does not lie on the lines, blow up this point, and get a del Pezzo surface of type~$12$ by Corollary \ref{dPblowup}.

~

A del Pezzo surface of degree $2$ of type $39$ is the blowup of $\Pro^2_{\F_q}$ at a point of degree~$7$. Let $p_1 = \left(a^3 : a : 1 \right)$, where $a \in \F_{q^7} \setminus \F_q$, and $p_2$, $\ldots$, $p_7$ be the conjugates of $p_1$. If six points from the set $p_1$, $\ldots$, $p_7$ lie on a conic, then all these points lie on a conic defined over $\F_q$. But it is impossible, since for the conic given by $Ax^2 + Bxy + Cy^2 + Dxz + Eyz + Fz^2 = 0$ the equality $Aa^6 + Ba^4 + Ca^2 + Da^3 + Ea + F = 0$ holds only if $A = B = C = D = E = F = 0$.

Note that three points $\left( x^3 : x : 1 \right)$, $\left( y^3 : y : 1 \right)$, $\left( z^3 : z : 1 \right)$ lie on a line if and only if $x = y$, $x = z$, $y = z$ or $x + y + z = 0$. Therefore if three points in the set $p_1$, $\ldots$, $p_7$ lie on a line then $a^{q^i} + a^{q^j} + a^{q^k} = 0$. One may assume that $i = 0$. Up to symmetries there are four possibilities:
\begin{itemize}
\item $j = 1$, $k = 2$;
\item $j = 1$, $k = 3$;
\item $j = 1$, $k = 4$;
\item $j = 2$, $k = 4$.
\end{itemize}

Note that $a + a^q + \ldots a^{q^6} \in \F_q$.

If $j = 1$, $k = 2$ then $a + a^q + a^{q^2} = a^{q^3} + a^{q^4} + a^{q^5} = 0$, and $a^{q^6} \in \F_q$ that is impossible.

If $j = 1$, $k = 4$ then $a + a^q + a^{q^4} = a^{q^2} + a^{q^3} + a^{q^6} = 0$, and $a^{q^5} \in \F_q$ that is impossible.

If $j = 2$, $k = 4$ then $a + a^{q^2} + a^{q^4} = a^{q} + a^{q^3} + a^{q^5} = 0$, and $a^{q^6} \in \F_q$ that is impossible.

If $j = 1$, $k = 3$ then $a + a^q + a^{q^3} = a^q + a^{q^2} + a^{q^4} = a^{q^5} + a^{q^6} + a^q = 0$, and $2a^q \in \F_q$ that is possible only for even $q$. But in this case $a + a^q + \ldots a^{q^6} = 0$. One can put $a' = a + 1$, and have $a' + a'^q + \ldots a'^{q^6} = 1$. Now $a'^{q^i} + a'^{q^j} + a'^{q^k} \ne 0$ for any $i$, $j$ and $k$.

Therefore for any $\F_q$ we can find a point of degree $7$ on $\Pro^2_{\F_q}$ in a general position, blow up this point, and get a del Pezzo surface of type $39$ by Theorem \ref{GenPos}.
\end{proof}

\begin{lemma}
\label{X5blowup}
A del Pezzo surface of degree $2$ of type $36$ exists for all finite fields.
\end{lemma}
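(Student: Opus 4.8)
The plan is to realise a del Pezzo surface of degree $2$ of type $36$ as the blowup of a minimal del Pezzo surface $Y$ of degree $5$ over $\F_q$ at a point of degree $3$, as prescribed by Proposition \ref{GeiserTwist}(2). The pleasant feature we expect is that over \emph{any} finite field \emph{every} point of degree $3$ on such a $Y$ is in general position, so no delicate choice of point is needed.

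First I would set up the structure of $Y$. Recall that a minimal del Pezzo surface of degree $5$ exists over $\F_q$ for every $q$: the group $\Gamma$ acts on $\Pic(\overline{Y})$ through a Coxeter element of $W(A_4)\cong S_5$, that is, through a $5$-cycle, which is realised by the cyclic extension $\F_{q^5}/\F_q$. On $\overline{Y}$ there are $10$ curves with selfintersection $-1$, whose intersection graph is the Petersen graph, and $5$ conic bundle structures $\phi_1,\dots,\phi_5$; the generator of $\Gamma$ permutes the ten lines in two orbits of length $5$ and permutes $\phi_1,\dots,\phi_5$ cyclically. Two distinct fibre classes among the $\phi_k$ have intersection number $1$, and each $\phi_k$ has exactly $3$ reducible fibres (since $\overline{Y}$ has topological Euler number $7$), each consisting of two $(-1)$-curves. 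From $\#Y(\F_{q^n})=q^{2n}+q^n\operatorname{tr}\!\left(\Fr^n\mid\Pic(\overline{Y})\right)+1$ and the fact that any power $\Fr^n$ with $\gcd(n,5)=1$ has trace $1+(-1)=0$ on $\Pic(\overline{Y})$, we obtain $\#Y(\F_q)=q^2+1$ and $\#Y(\F_{q^3})=q^6+1$, so $Y$ carries exactly $(q^6-q^2)/3\geqslant 1$ points of degree $3$.

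Next I would show that for any point $P$ of degree $3$ on $Y$ the blowup $X=\operatorname{Bl}_P Y$ is a del Pezzo surface of degree $2$ of type $36$ (the type is forced by the eigenvalue computation in the proof of Proposition \ref{GeiserTwist}, cf.\ Remark \ref{twisteigenvalues}). Writing $\overline{Y}=\operatorname{Bl}_{p_1,\dots,p_4}\Pro^2$, by Theorem \ref{GenPos} this amounts to checking that the seven points $p_1,\dots,p_4$ and the three geometric points $q_1,q_2,q_3$ of $P$ are in general position. Sorting the possible failures by the class on $\overline{Y}$ of the responsible line or conic (the relevant classes being $L-E_i$, $L-E_i-E_j$, $L$, $2L-E_i-E_j-E_k$, $2L-E_1-E_2-E_3-E_4$), and using that $\Gamma$ permutes cyclically both the ten line classes and the five linear systems $|L|,\,|2L-E_i-E_j-E_k|$, a failure forces one of: (a) some $q_i$ lies on a $(-1)$-curve; (b) two of the $q_i$ lie on a common fibre of some $\phi_k$; (c) all three $q_i$ lie on a common member of one of those five linear systems. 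Each is impossible for a point of degree $3$. For (a): if $q_1$ lies on a $(-1)$-curve $C$, then $\Fr^5(C)=C$, so $q_1,q_3\in C$; applying $\Fr$ gives $q_1,q_2\in C^q$ and $q_2,q_3\in C^{q^2}$, hence $C,C^q,C^{q^2}$ are three distinct pairwise–intersecting lines, impossible in the triangle–free Petersen graph. For (c): if $D$ is such a member, then $D^q$ lies in a different one of the five systems and also contains $q_1,q_2,q_3$, but $D\cdot D^q\leqslant 2$ for any two distinct of these classes — a contradiction. For (b): the fibre $\Pi$ of $\phi_k$ through $q_1$ is irreducible (otherwise $q_1$ lies on a $(-1)$-curve, excluded by (a), since each reducible fibre is a union of $(-1)$-curves) and contains $q_2$; applying $\Fr^3$, the fibre $\Fr^3(\Pi)$ of $\phi_{k+3}\neq\phi_k$ also contains $q_1$ and $q_2$, so $\Pi\cap\Fr^3(\Pi)$ has at least two points, contradicting that distinct fibre classes meet in one point. (In each case, if the curve in question is reducible one reduces to (a) or (b) by looking at its components.)

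I do not expect a serious obstacle here, and in particular the argument should be completely uniform in $q$ — every $q$ already supplies a point of degree $3$. The points needing care are mainly organisational: recording precisely (with a reference) the classical existence of the minimal quintic del Pezzo surface over $\F_q$ together with the stated Galois–orbit structure on its lines and conic bundles; and the bookkeeping in Step 3, i.e.\ verifying that each of the finitely many classes that can be responsible for a general‑position failure is indeed covered by (a), (b) or (c).
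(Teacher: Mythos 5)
Your argument is correct, but it is genuinely different from the one in the paper. The paper works entirely inside $\Pro^2_{\F_q}$: it takes a closed point of degree $5$ on a conic $P$ and a closed point of degree $3$ on a second conic $Q$ (chosen off $P$), verifies by hand (using Frobenius-invariance of any offending line, conic or singular cubic) that the resulting eight geometric points are in general position, blows them all up to a del Pezzo surface of degree $1$, and finally contracts the strict transform of $P$ to land on the degree $2$ surface of type $36$. You instead start from the abstract minimal quintic del Pezzo surface $Y$, use the zeta-function count $\#Y(\F_{q^n})=q^{2n}+1$ for $5\nmid n$ to produce a point of degree $3$, and then prove the stronger statement that \emph{every} degree $3$ point is automatically in general position, via the structural facts that the Petersen graph is triangle-free, that fibres of distinct conic fibrations on $\overline{Y}$ meet in one point, and that distinct members of the five residual systems $|-K_Y-f_k|$ meet in two points; I checked that your case split (a)--(c) does exhaust the possible failures of the no-three-on-a-line / no-six-on-a-conic conditions (the degenerate and reducible subcases reduce to (a) or (b) as you indicate), and that the eigenvalue bookkeeping $\{\xi_5,\xi_5^2,\xi_5^3,\xi_5^4\}\cup\{1,\omega,\omega^2\}$ pins down type $36$ uniquely in Table \ref{table1}. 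Your route buys uniformity in $q$ and the extra information that no choice of point is needed, at the cost of importing the classical structure theory of the minimal degree $5$ surface (existence over every $\F_q$, the two Galois orbits of lines, the five cyclically permuted conic fibrations), which you should state with a reference; the paper's route is more computational but self-contained. The only items to tighten before this is a complete proof are the ones you already flag: the exhaustiveness of (a)--(c) and the reducible-member reductions should be written out explicitly.
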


\begin{proof}

A del Pezzo surface of degree $2$ of type $36$ is the blowup of a minimal del Pezzo surface of degree $5$ at a point of degree $3$. One can blow up a point of degree $5$ lying on a conic in $\Pro^2_{\F_q}$, contract the transform of this conic and get a minimal del Pezzo surface of degree $5$.

Let $P$ and $Q$ be two conics on $\Pro^2_{\F_q}$ defined over $\F_q$; the point $p_1$ be a geometric point on $P$ defined over $\F_{q^5}$; the points $p_2$, $\ldots$, $p_5$ be the conjugates of $p_1$; the point $q_1$ be a geometric point on $Q$ defined over $\F_{q^3}$ which does not lie on $P$; and $q_2$ and $q_3$ be the conjugates of $q_1$. Let $F$ be the Frobenius automorphism of $\Pro^2_{\F_q}$:
$$
F(x : y : z) = (x^q : y^q : z^q).
$$

Assume that points $p_i$, $p_j$ and $q_k$ lie on a line. Then the points $F^5p_i = p_i$, $F^5p_j = p_j$ and $F^5q_k$ lie on the same line. Therefore the points $q_1$, $q_2$ and $q_3$ lie on a line. But this is impossible since any line meets $Q$ at $2$ or $1$ point. The same arguments show that three points $p_i$, $q_j$ and $q_k$ can not lie on a line.

If a conic passes through six points $p_i$, $p_j$, $p_k$, $q_1$, $q_2$ and $q_3$ then this conic passes through the points $p_1$, $\ldots$, $p_5$ since either the set $\{ Fp_i, Fp_j, Fp_k\}$ or the set $\{F^2p_i, F^2p_j, F^2p_k\}$ has two common points with the set $\{p_i, p_j, p_k\}$. If a conic $C$ passes through four points from the set $\{p_1, \ldots, p_5\}$ and two points from the set $\{q_1, q_2, q_3\}$ then it passes through all points from these sets since it has $5$ common points with the conics $F^3C$ and $F^5C$. All these cases are impossible since the points $q_1$, $q_2$ and $q_3$ do not lie on $P$.

If an irreducible plane cubic curve $C$ passes through the eight points $p_1$, $\ldots$, $p_5$, $q_1$, $q_2$, $q_3$ and has a singularity at one of these points then it has at least three singular points since $C \cdot FC \geqslant 10$ and $C \cdot F^2C \geqslant 10$. That is impossible.

Thus the points $p_1$, $\ldots$, $p_5$, $q_1$, $q_2$, $q_3$ lie in a general position. The blowup of $\Pro^2_{\F_q}$ at these points is a del Pezzo surface of degree $1$ by Theorem \ref{GenPos}. One can contract the transform of $P$, and get a del Pezzo surface of type $36$.

\end{proof}

\begin{lemma}
\label{X4blowup}
A del Pezzo surface of degree $2$ of type $17$ does not exist for $\F_2$, and exists for the other finite fields.
\end{lemma}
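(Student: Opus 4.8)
I would realise a del Pezzo surface of degree $2$ of type $17$ as the blowup at two $\F_q$-points of a minimal del Pezzo surface $Y$ of degree $4$ carrying a conic bundle structure with degenerate fibres over two points of degree $2$; in the notation of the proof of Theorem \ref{RybakImprove} this is the degree $4$ surface with zeta-function $Z_5$. By the eigenvalue bookkeeping of Proposition \ref{GeiserTwist} every del Pezzo surface of degree $2$ of type $17$ is of this form, so a surface of type $17$ over $\F_q$ exists if and only if such a $Y$ exists and can be blown up at two suitable $\F_q$-points. Over $\F_2$ the surface $Y$ does not exist by Theorem \ref{RybakImprove} (there is only one point of degree $2$ on $\Pro^1_{\F_2}$, so even the conic bundle cannot be produced), and this gives the non-existence statement.

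For $q\geqslant 3$ I would start from the surface $Y$ provided by Theorem \ref{RybakImprove}. Since $Y$ is minimal, none of its $16$ lines is defined over $\F_q$; these lines form Galois orbits of length at least $2$, and an $\F_q$-point on the union of such an orbit must lie on every line in the orbit, hence on at most one point, so the $16$ lines carry at most $8$ $\F_q$-points altogether. As $|Y(\F_q)|$ is of order $q^2$, for $q$ large enough one may choose $p_1\in Y(\F_q)$ lying on no line; blowing it up gives, by Corollary \ref{dPblowup}, a cubic surface $Y'$. One then repeats the argument on $Y'$: estimate how many of the $27$ lines of $Y'$ are defined over $\F_q$ and how many $\F_q$-points lie on their union, choose an $\F_q$-point $p_2$ of $Y'$ on no line, and blow it up to obtain, again by Corollary \ref{dPblowup}, a del Pezzo surface $X$ of degree $2$. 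Blowing up two $\F_q$-points adds two eigenvalues $1$ to the collection of Frobenius eigenvalues on $K_X^{\perp}$, and by Remark \ref{twisteigenvalues} together with the discussion in Proposition \ref{GeiserTwist} this collection is precisely that of type $17$, so $X$ has type $17$.

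The hard part will be the small fields. The estimates above are comfortable once $q$ is moderately large, but for $q=3$ the surface $Y$ has very few $\F_q$-points and it is not a priori clear that any of them avoids all $16$ lines, while for $q\in\{3,4,5\}$ (and possibly $q=7$) the cubic surface $Y'$ acquires several lines defined over $\F_q$, so the crude count on $Y'$ does not by itself produce a good point $p_2$. For these finitely many fields the argument must be sharpened using the exact Frobenius conjugacy class on $Y$ — which controls which lines and conics of $Y$ are defined over $\F_q$ and how the lines of $Y'$ meet — or by writing the surfaces down explicitly and exhibiting a suitable point; one must also check that $p_1$ can be chosen so that $Y'$ still has an $\F_q$-point off all $27$ lines. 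Carrying out this small-$q$ analysis is the main obstacle; the rest of the proof is the point count sketched above together with the reduction to Theorem \ref{RybakImprove}.
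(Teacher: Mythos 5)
Your overall strategy coincides with the paper's: reduce to the minimal degree~$4$ conic bundle $S$ with degenerate fibres over two points of degree~$2$ (non-existent over $\F_2$ by Theorem~\ref{RybakImprove}, which settles the negative part), and then blow up two $\F_q$-points avoiding the lines, invoking Corollary~\ref{dPblowup} twice. However, you leave the small-$q$ existence unproved and explicitly flag it as the main obstacle, so as written there is a genuine gap: your crude bounds (at most $8$ rational points on the $16$ lines of $S$, an unspecified estimate on the $27$ lines of the cubic) do not produce a good point for $q=3,4,5$, and you offer no mechanism to close those cases.

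The missing idea is a sharper orbit analysis that makes the count work uniformly for all $q\geqslant 3$, with no case-by-case treatment. First, on $S$ the $16$ lines form four Frobenius orbits of length $4$ (each line is a component of a singular fibre of one of the two conic bundle structures), and since three or more $(-1)$-curves cannot pass through a common point of a del Pezzo surface of degree~$4$, an $\F_q$-point in such an orbit would have to lie on all four lines; hence there are \emph{no} $\F_q$-points on the lines of $S$ at all, and all $(q+1)^2$ rational points are available for $p_1$. Second, on the cubic $\widetilde{S}$ obtained by blowing up $p_1$, exactly three lines are defined over $\F_q$, namely $E=f^{-1}(p_1)$ and the transforms $C_1,C_2$ of the two conic fibres through $p_1$; every other line $L$ lies in a Frobenius orbit of length $4$. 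The key computation here is: if $L\cdot E=1$ then $L\cdot C_1=L\cdot C_2=0$ because $E+C_1+C_2\sim -K_{\widetilde S}$, so $f(L)$ is a section of each conic bundle meeting exactly one component $D_1$ of each singular fibre; since $\Fr^2 D_1=D_2$, one gets $\Fr^2 f(L)\ne f(L)$, forcing orbit length $4$. As four lines on a cubic surface cannot be concurrent, every $\F_q$-point on a line of $\widetilde S$ lies on $E\cup C_1\cup C_2$, which leaves $q^2$ rational points off all lines --- positive for every $q\geqslant 3$. Without this structural input (or an explicit verification over the small fields), your argument does not establish the lemma for $q=3,4,5$.
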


\begin{proof}

A del Pezzo surface of degree $2$ of type $17$ is the blowup at two $\F_q$-points of a minimal del Pezzo surface $S$ of degree $4$ admitting a conic bundle structure with degenerate fibres over two points of degree $2$. Such del Pezzo surface does not exist over $\F_2$ and exists for the other finite fields by \cite[Theorem 3.2]{Ry05}. Assume that $q \geqslant 3$. The surface $S$ admits two structures of conic bundles and each of $16$ lines is a component of a singular fibre of one of these conic bundles. These lines form four $\Gal\left(\overline{\F}_q / \F_q \right)$-orbits, each consisting of $4$ curves. Therefore there are no $\F_q$-points on the $(-1)$-curves. But there are $(q + 1)^2$ points defined over $\F_q$ on the smooth fibres. Let $f:\widetilde{S} \rightarrow S$ be the blowup of $S$ at an $\F_q$-point $P$. By Corollary \ref{dPblowup} the surface $\widetilde{S}$ is a cubic surface. There are three lines on $\widetilde{S}$ defined over $\F_q$: the exceptional divisor $E = f^{-1}(P)$, and the proper transforms $C_1$ and $C_2$ of fibres of two conic bundles structures passing through $P$.

Let $F$ be the Frobenius automorphism. We show that all other $F$-orbits of lines consist of $4$ lines. Let $L$ be a line on $\widetilde{S}$ that differs from $E$, $C_1$ and $C_2$. If $L \cdot E = 0$ then $f(L)$ is a $(-1)$-curve and the orbit of this curve consists of $4$ curves. Assume that $E \cdot L = 1$. Then $C_1 \cdot L = C_2 \cdot L = 0$ since $E + C_1 + C_2 \sim -K_{\widetilde{S}}$. It means that $f(L)$ is a section of any conic bundle on $S$. For any singular fibre this section must meet one component $D_1$ of this fibre at a point, and for the other component $D_2$ of this fibre $f(L) \cdot D_2 = 0$. But we have $F^2 D_1 = D_2$, therefore $F^2 f(L) \cdot D_2 = f(L) \cdot D_1 = 1$. Thus $F^2f(L) \ne f(L)$ and the orbit of $L$ consists of $4$ lines.

Four lines on a cubic surface can not have a common point. Therefore all $\F_q$ points on lines on $\widetilde{S}$ are contained in $E$, $C_1$ and $C_2$. Thus there are $q^2$ points defined over $\F_q$ not lying on a lines on $\widetilde{S}$. One can blow up one of these points, and get a del Pezzo surface of type~$17$ by Corollary \ref{dPblowup}.

\end{proof}

\begin{lemma}
\label{X3blowup}
~
\begin{itemize}
\item A del Pezzo surface of degree $2$ of type $32$ does not exist for $\F_2$, and exists for the other finite fields.

\item Del Pezzo surfaces of degree $2$ of types $46$, $47$, $48$ exist only for those finite fields for which exist minimal cubic surfaces of types $c_{13}$, $c_{14}$ and $c_{12}$ respectively (see e.g. \cite[IV.9, Table 1]{Man74}).

\end{itemize}
\end{lemma}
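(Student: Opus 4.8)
The plan is to use the Geiser twist correspondence of Proposition \ref{GeiserTwist} together with the blowup-at-an-$\F_q$-point descriptions it provides, reducing existence of the degree $2$ surfaces of types $32$, $46$, $47$, $48$ to existence of the cubic surfaces of the corresponding Swinnerton-Dyer types. Recall from Proposition \ref{GeiserTwist} that a del Pezzo surface of degree $2$ of type $60$, $58$, $56$, $51$ exists if and only if one of type $32$, $46$, $47$, $48$ respectively exists; and that a surface of type $32$ (resp.\ $46$, $47$, $48$) is the blowup of a cubic surface of type $(c_{11})$ (resp.\ $(c_{13})$, $(c_{14})$, $(c_{12})$) at an $\F_q$-point. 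So the content of the lemma is to pass between a minimal cubic surface $S$ of the given type and the degree $2$ surface obtained by blowing up an $\F_q$-point of $S$ lying off the $27$ lines.

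First I would observe that in all four cases the relevant cubic surfaces are minimal, so by the analysis in the excerpt (e.g.\ the orbit computations on the $27$ lines, as in the proof of Lemma \ref{X4blowup}) the $27$ lines form Galois orbits none of which is a single point, hence the $\F_q$-points lying on lines are intersection points of at least two lines; a short count shows that the number of such points is bounded, while a minimal cubic surface over $\F_q$ has $q^2+q+1+\operatorname{tr}(q\Fr)$ points, which grows like $q^2$. Therefore for each of types $c_{11}$, $c_{12}$, $c_{13}$, $c_{14}$ there is always an $\F_q$-point off the $27$ lines whenever the cubic surface itself exists, and by Corollary \ref{dPblowup} its blowup is a del Pezzo surface of degree $2$ of the corresponding type $32$, $48$, $46$, $47$. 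Conversely, if a del Pezzo surface of degree $2$ of type $32$, $46$, $47$, $48$ exists, then contracting the unique $\Gamma$-invariant $(-1)$-curve (which exists because the $\Gamma$-action fixes a line class, as one reads off from Table \ref{table1}) produces a cubic surface whose Galois action has the eigenvalues prescribing type $c_{11}$, $c_{13}$, $c_{14}$, $c_{12}$, and minimality is preserved. This gives the claimed biconditional for types $46$, $47$, $48$.

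For type $32$ specifically I would then cite the classification of conjugacy classes in $W(E_6)$ (e.g.\ \cite[IV.9, Table 1]{Man74}): a minimal cubic surface of Swinnerton-Dyer type $c_{11}$ exists over $\F_q$ for every $q \geqslant 3$, and does not exist over $\F_2$ because the relevant point count on such a surface would be $\leqslant 0$ or because $\Pro^2_{\F_2}$ does not admit six points of the required total degree in general position; either way the nonexistence over $\F_2$ follows as in Remark \ref{CBnotexist} and the constructions already in the excerpt. Combining with the Geiser-twist equivalence yields that a degree $2$ surface of type $32$ does not exist for $\F_2$ and exists for all other $\F_q$.

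The main obstacle is the point-counting step: I must check, uniformly in $q$ and for each of the four conjugacy-class types, that the bound on the number of $\F_q$-points lying on the $27$ lines is strictly smaller than the total number of $\F_q$-points on the minimal cubic surface, including the small-$q$ cases $q=2,3,4$ where the surface may or may not exist at all. The delicate point is that for $q=2$ one cannot blow up seven $\F_8$-points on $\Pro^2_{\F_8}$ in general position (cf.\ \cite[Subsection 4.2, case $a=8$]{BFL16}), which is exactly what forces the nonexistence of type $32$ over $\F_2$; for types $46$, $47$, $48$ the statement is deliberately left conditional on the (incompletely known) existence of the cubic surfaces $c_{13}$, $c_{14}$, $c_{12}$, so there the lemma asserts only the equivalence, not an unconditional answer, and no further obstacle arises.
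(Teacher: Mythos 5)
Your overall strategy coincides with the paper's: for each of the types $32$, $46$, $47$, $48$ you take the corresponding minimal cubic surface ($c_{11}$, $c_{13}$, $c_{14}$, $c_{12}$), use the Galois orbit structure of the $27$ lines to bound the number of $\F_q$-points lying on lines, compare with the total point count, and blow up a point off the lines via Corollary \ref{dPblowup}. For types $46$, $47$, $48$ this works: there is at most one orbit of length $3$, hence at most one $\F_q$-point on the lines, against $q^2+1$, $q^2+q+1$, $q^2+2q+1$ points in total, so the conditional statement follows.

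However, for type $32$ there is a genuine gap exactly at the place you flag as ``the main obstacle'' but do not resolve. A cubic surface of type $c_{11}$ has $q^2-2q+1$ points over $\F_q$ and up to \emph{nine} $\F_q$-points (Eckardt points) on the lines, since all nine orbits have length $3$; the inequality $q^2-2q+1>9$ fails for $q=2,3,4$, so the generic count gives nothing there. The paper handles these cases by exhibiting explicit cubic equations: over $\F_2$ the $c_{11}$ surface \emph{does} exist (it is constructed in \cite{SD10} for every $q$) and has exactly one $\F_2$-point, which a direct computation shows to be an Eckardt point --- that is the actual reason type $32$ fails over $\F_2$. Your two proposed explanations for the $\F_2$ nonexistence are both incorrect: the point count is $1$, not $\leqslant 0$, and the obstruction about seven $\F_8$-points in general position on $\Pro^2_{\F_8}$ pertains to type $1$ (hence $49$), not to type $32$. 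You also leave $q=3$ and $q=4$ entirely open, whereas the paper must produce explicit surfaces with a non-Eckardt rational point in each of these cases. Without these three small-field arguments the first bullet of the lemma is not proved.
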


\begin{proof}

Considered types of del Pezzo surfaces of degree $2$ are blowups of minimal cubic surfaces at an $\F_q$-point. Note that the $\Gal\left(\overline{\F}_q / \F_q \right)$-orbits of lines on a cubic surface with length greater than~$3$ do not contain $\F_q$-points and any orbit of length $3$ can contain at most one $\F_q$-point.

Del Pezzo surfaces of degree $2$ of types $46$, $47$, $48$ are blowups at an $\F_q$-point of minimal cubic surfaces of types $c_{13}$, $c_{14}$ and $c_{12}$ respectively. For these cubic surfaces all orbits of lines have length $3$ or greater, moreover, there is at most one orbit of length $3$ (see \cite[IV.9, Table 1]{Man74}). It means that there is at most one $\F_q$-point lying on the lines on such cubic surfaces. But there are $q^2 + 1$, $q^2 + q + 1$ and $q^2 + 2q + 1$ points defined over~$\F_q$ on such cubic surfaces respectively. Thus in each of those cases there is an $\F_q$-point not lying on the lines. One can blow up this point, and get a del Pezzo surface of type $46$, $47$ or $48$ respectively by Corollary \ref{dPblowup}.

~

A Del Pezzo surface of degree $2$ of type $32$ is the blowup at an $\F_q$-point of a minimal cubic surface $S$ of type $c_{11}$. This type of cubic surface was constructed in \cite{SD10} for any finite field. Each $\Gal\left(\overline{\F}_q / \F_q \right)$-orbit of lines consist of three lines (see \cite[IV.9, Table~1]{Man74}. Therefore there are at most nine $\F_q$-points lying on the lines, and all these points are Eckardt points.

There are $q^2 - 2q + 1$ points defined over $\F_q$ on $S$. Moreover, each nonsingular cubic surface with $q^2 - 2q + 1$ points defined over $\F_q$ has type $c_{11}$. If $q > 4$ then we can find an $\F_q$-point on $S$ which does not lie on the lines, blow up this point, and get a del Pezzo surface of type~$32$ by Corollary \ref{dPblowup}.

If $q = 2$ then $S$ contains a unique $\F_2$-point. By direct computation one can check that any cubic surface containing a unique $\F_2$-point is isomorphic to the surface given by the following equation:
\begin{equation}\label{F2cubic}
x^3 + y^3 + z^3 + x^2y + y^2z + z^2x + xyz + z^2t + zt^2 = 0.
\end{equation}
\noindent The $\F_2$-point $(0:0:0:1)$ is an Eckardt point. Therefore all $\F_2$-points on $S$ are contained in the lines, and a del Pezzo surface of type $32$ does not exist over $\F_2$.

The cubic given by equation \eqref{F2cubic} considered over $\F_4$ has type $c_{11}$ and contains \mbox{$\F_4$-point} $(0 : \omega : \omega : 1)$, where $\omega^3 = 1$. This point is not an Eckardt point. Therefore one can blow up this point, and get a del Pezzo surface of type~$32$ by Corollary \ref{dPblowup}.

For $q = 3$ the cubic surface given by the equation
$$
x^2y + xy^2 + x^2z + xyz + y^2z - xyt - xzt - yzt - z^2t - zt^2 + t^3 = 0
$$
\noindent containts exactly four $\F_3$-points: $(1 : 0 : 0 : 0)$, $(0 : 1 : 0 : 0)$, $(1 : -1 : 0 : 0)$, $(0 : 0 : 1 : 0)$. Thus this surface has type $c_{11}$. One can check that the point $(0 : 0 : 1 : 0)$ is not an Eckardt point. Therefore one can blow up this point, and get a del Pezzo surface of type~$32$ by Corollary \ref{dPblowup}.
\end{proof}

Now we collect results about del Pezzo surfaces of degree $2$ with $\rho(\XX)^{\Gamma} = 1$

\begin{proposition}
\label{main3}
In the notation of Table \ref{table1} the following holds.

\begin{enumerate}

\item[(i)] A del Pezzo surface of degree $2$ of type $49$ does not exist for $\F_2$, $\F_3$, $\F_4$, $\F_5$, $\F_7$, $\F_8$, and exists for the other finite fields.

\item[(ii)] Del Pezzo surfaces of degree $2$ of types $50$, $53$, $55$, $60$ do not exist for $\F_2$, and exist for the other finite fields.

\item[(iii)] Del Pezzo surfaces of degree $2$ of types $52$, $54$, $57$, $59$ exist for all finite fields.

\item[(iv)] Del Pezzo surfaces of degree $2$ of types $51$, $58$ exist for any $\F_q$ where $q$ is odd.

\item[(v)] A del Pezzo surface of degree $2$ of type $56$ exists for any $\F_q$ where $q = 6k + 1$.

\end{enumerate}
\end{proposition}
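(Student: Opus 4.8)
The plan is to reduce the whole statement to the existence of the twelve non-minimal del Pezzo surfaces of degree $2$ catalogued in Proposition \ref{GeiserTwist} and then quote the lemmas already established. Concretely, for each type $t\in\{49,\dots,60\}$ the Geiser twist interchanges $t$ with one of the types $1$, $4$, $12$, $15$, $17$, $32$, $36$, $39$, $44$, $46$, $47$, $48$, and by Proposition \ref{GeiserTwist} a surface of type $t$ exists over $\F_q$ if and only if a surface of the paired type exists over $\F_q$. So the entire proposition follows by combining Proposition \ref{GeiserTwist} with the existence results for these twelve paired types, type by type.

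For part (i), type $49$ is paired with type $1$, and Lemma \ref{P2blowup} gives exactly the stated list of excluded fields ($\F_2,\F_3,\F_4,\F_5,\F_7,\F_8$) together with existence over all others. For part (ii), type $50$ is paired with type $17$ (Lemma \ref{X4blowup}), type $53$ with type $4$ (Lemma \ref{P2blowup}), type $55$ with type $12$ (Lemma \ref{P2blowup}), and type $60$ with type $32$ (Lemma \ref{X3blowup}); each of $17$, $4$, $12$, $32$ fails precisely over $\F_2$ and exists over every other $\F_q$, which yields (ii). For part (iii), type $52$ is paired with type $44$, which exists over every finite field by Proposition \ref{main2}(iv); types $54$ and $57$ are paired with types $15$ and $39$, which exist over all finite fields by Lemma \ref{P2blowup}; and type $59$ is paired with type $36$, which exists over all finite fields by Lemma \ref{X5blowup}.

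Parts (iv) and (v) carry the genuine input. Types $51$, $58$, $56$ are paired with types $48$, $46$, $47$, which by Lemma \ref{X3blowup} are blowups at an $\F_q$-point of minimal cubic surfaces of types $c_{12}$, $c_{13}$, $c_{14}$, and that same lemma already shows the blowup exists whenever the cubic surface does. So it remains only to invoke the constructions of minimal cubic surfaces from \cite{RT16}: types $c_{12}$ and $c_{13}$ over every odd $q$, and type $c_{14}$ over every $q=6k+1$. Substituting these gives (iv) and (v).

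I do not expect a real obstacle here: the argument is essentially bookkeeping, and the only point to be careful about is that the twist identifications in Proposition \ref{GeiserTwist} are applied to the correct types — but this was already arranged via the eigenvalue collections in Remark \ref{twisteigenvalues} and in the proof of Proposition \ref{GeiserTwist}, so nothing new is needed. The genuine limitation lies entirely in (iv)--(v): the restrictions on $q$ there are inherited from the still-incomplete classification of which finite fields admit minimal cubic surfaces of types $c_{12}$, $c_{13}$, $c_{14}$, so sharpening them would mean improving \cite{RT16} rather than anything in the present argument.
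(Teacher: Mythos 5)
Your proposal is correct and follows essentially the same route as the paper: apply Proposition \ref{GeiserTwist} to pass to the twisted types, then invoke Lemma \ref{P2blowup} for $49$, $53$, $54$, $55$, $57$, Lemma \ref{X5blowup} for $59$, Lemma \ref{X4blowup} for $50$, Lemma \ref{X3blowup} for $51$, $56$, $58$, $60$, Proposition \ref{main2}(iv) for $52$, and \cite[Theorem 1.2]{RT16} for the cubic surfaces of types $c_{12}$, $c_{13}$, $c_{14}$ underlying parts (iv) and (v). All the type pairings you list match Table \ref{table1}, so nothing further is needed.
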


\begin{proof}

We apply Proposition \ref{GeiserTwist} and then apply Lemma \ref{P2blowup} for types $49$, $53$, $54$, $55$ and~$57$; Lemma \ref{X5blowup} for type $59$; Lemma \ref{X4blowup} for type $50$; Lemma \ref{X3blowup} for types $51$, $56$, $58$, $60$; and Proposition \ref{main2} (iv) for type $52$.

By \cite[Theorem 1.2]{RT16} cubic surfaces of types $c_{12}$ and $c_{13}$ exist for odd $q$, and cubic surfaces of type $c_{14}$ exist for $q = 6k + 1$. These types of cubic surfaces correspond to types $51$, $58$ and $56$ of del Pezzo surfaces of degree $2$ respectively by Proposition \ref{GeiserTwist} and Lemma \ref{X3blowup}.

\end{proof}

Now we prove Theorem \ref{main}.

\begin{proof}

Case $(1)$ is Proposition \ref{main3} (i); case $(2)$ is Proposition \ref{main2} (i); case $(3)$ follows from Proposition \ref{main2} (iii), Lemma \ref{CB1_1_1_3F2} and Proposition \ref{main3} (ii);  case $(4)$ follows from Proposition \ref{main2} (iii), Lemma \ref{CB1_5F2}, Proposition \ref{main2} (iv) and Proposition \ref{main3} (iii); case~$(5)$ is Proposition \ref{main2} (ii); case $(6)$ is Proposition \ref{main3} (iv); and case $(7)$ is Proposition~\ref{main3}~(v).

\end{proof}

\appendix

\section{Conjugacy classes of elements in $W(E_7)$}

In the following table we collect some facts about conjugacy classes of elements in the Weyl group~$W(E_7)$. This table is based on \cite[Table $10$]{Car72}. The first column is a number of a conjugacy class in order of their appearence. Throughout the paper this number is called a \textit{type} of del Pezzo surface. The second column is a Carter graph corresponding to the conjugacy class (see \cite{Car72}). The third column is the order of an element. The fourth column is the collection of eigenvalues of the action of an element on $K_X^{\perp} \subset \Pic(\XX) \otimes \mathbb{Q}$. The fifth column is the invariant Picard number $\rho(\XX)^{\Gamma}$. The last column is a number of the corresponding conjugacy class after the Geiser twist (see Definition \ref{GeiserTwistDef}). We denote by $\omega$ a third root of unity and by $\xi_d$ a $d$-th root of unity.

\begin{center}
\begin{longtable}{|c|c|c|c|c|c|}

\hline
Number & Graph & Order & Eigenvalues & $\rho(\XX)^{\Gamma}$ & Geiser \\
\hline
\hline \endhead
1. & $\varnothing$ & $1$ & $1$, $1$, $1$, $1$, $1$, $1$, $1$ & $8$ & 49. \\
\hline
2. & $A_1$ & $2$ & $1$, $1$, $1$, $1$, $1$, $1$, $-1$ & $7$ & 31. \\
\hline
3. & $A_1^2$ & $2$ & $1$, $1$, $1$, $1$, $1$, $-1$, $-1$ & $6$ & 18. \\
\hline
4. & $A_2$ & $3$ & $1$, $1$, $1$, $1$, $1$, $\omega$, $\omega^2$ & $6$ & 53. \\
\hline
5. & $A_1^3$ & $2$ & $1$, $1$, $1$, $1$, $-1$, $-1$, $-1$ & $5$ & 9. \\
\hline
6. & $A_1^3$ & $2$ & $1$, $1$, $1$, $1$, $-1$, $-1$, $-1$ & $5$ & 10. \\
\hline
7. & $A_2 \times A_1$ & $6$ & $1$, $1$, $1$, $1$, $-1$, $\omega$, $\omega^2$ & $5$ & 40. \\
\hline
8. & $A_3$ & $4$ & $1$, $1$, $1$, $1$, $i$, $-1$, $-i$ & $5$ & 33. \\
\hline
9. & $A_1^4$ & $2$ & $1$, $1$, $1$, $-1$, $-1$, $-1$, $-1$ & $4$ & 5. \\
\hline
10. & $A_1^4$ & $2$ & $1$, $1$, $1$, $-1$, $-1$, $-1$, $-1$ & $4$ & 6. \\
\hline
11. & $A_2 \times A_1^2$ & $6$ & $1$, $1$, $1$, $-1$, $-1$, $\omega$, $\omega^2$ & $4$ & 27. \\
\hline
12. & $A_2^2$ & $3$ & $1$, $1$, $1$, $\omega$, $\omega^2$, $\omega$, $\omega^2$ & $4$ & 55. \\
\hline
13. & $A_3 \times A_1$ & $4$ & $1$, $1$, $1$, $i$, $-1$, $-i$, $-1$ & $4$ & 21. \\
\hline
14. & $A_3 \times A_1$ & $4$ & $1$, $1$, $1$, $i$, $-1$, $-i$, $-1$ & $4$ & 22. \\
\hline
15. & $A_4$ & $5$ & $1$, $1$, $1$, $\xi_5$, $\xi_5^2$, $\xi_5^3$, $\xi_5^4$ & $4$ & 54. \\
\hline
16. & $D_4$ & $6$ & $1$, $1$, $1$, $-1$, $-\omega^2$, $-1$, $-\omega$ & $4$ & 19. \\
\hline
17. & $D_4(a_1)$ & $4$ & $1$, $1$, $1$, $i$, $-i$, $i$, $-i$ & $4$ & 50. \\
\hline
18. & $A_1^5$ & $2$ & $1$, $1$, $-1$, $-1$, $-1$, $-1$, $-1$ & $3$ & 3. \\
\hline
19. & $A_2 \times A_1^3$ & $6$ & $1$, $1$, $-1$, $-1$, $-1$, $\omega$, $\omega^2$ & $3$ & 16. \\
\hline
20. & $A_2^2 \times A_1$ & $6$ & $1$, $1$, $-1$, $\omega$, $\omega^2$, $\omega$, $\omega^2$ & $3$ & 45. \\
\hline
21. & $A_3 \times A_1^2$ & $4$ & $1$, $1$, $i$, $-1$, $-i$, $-1$, $-1$ & $3$ & 13. \\
\hline
22. & $A_3 \times A_1^2$ & $4$ & $1$, $1$, $i$, $-1$, $-i$, $-1$, $-1$ & $3$ & 14. \\
\hline
23. & $A_3 \times A_2$ & $12$ & $1$, $1$, $i$, $-1$, $-i$, $\omega$, $\omega^2$ & $3$ & 42. \\
\hline
24. & $A_4 \times A_1$ & $10$ & $1$, $1$, $\xi_5$, $\xi_5^2$, $\xi_5^3$, $\xi_5^4$, $-1$ & $3$ & 43. \\
\hline
25. & $A_5$ & $6$ & $1$, $1$, $-\omega^2$, $\omega$, $-1$, $\omega^2$, $-\omega$ & $3$ & 37. \\
\hline
26. & $A_5$ & $6$ & $1$, $1$, $-\omega^2$, $\omega$, $-1$, $\omega^2$, $-\omega$ & $3$ & 38. \\
\hline
27. & $D_4 \times A_1$ & $6$ & $1$, $1$, $-1$, $-\omega^2$, $-1$, $-\omega$, $-1$ & $3$ & 11. \\
\hline
28. & $D_4(a_1) \times A_1$ & $4$ & $1$, $1$, $i$, $-i$, $i$, $-i$, $-1$ & $3$ & 35. \\
\hline
29. & $D_5$ & $8$ & $1$, $1$, $-1$, $\xi_8$, $\xi_8^3$, $\xi_8^5$, $\xi_8^7$ & $3$ & 41. \\
\hline
30. & $D_5(a_1)$ & $12$ & $1$, $1$, $i$, $-i$, $-\omega^2$, $-1$, $-\omega$ & $3$ & 34. \\
\hline
31. & $A_1^6$ & $2$ & $1$, $-1$, $-1$, $-1$, $-1$, $-1$, $-1$ & $2$ & 2. \\
\hline
32. & $A_2^3$ & $3$ & $1$, $\omega$, $\omega^2$, $\omega$, $\omega^2$, $\omega$, $\omega^2$ & $2$ & 60. \\
\hline
33. & $A_3 \times A_1^3$ & $4$ & $1$, $i$, $-1$, $-i$, $-1$, $-1$, $-1$ & $2$ & 8. \\
\hline
34. & $A_3 \times A_2 \times A_1$ & $12$ & $1$, $i$, $-1$, $-i$, $\omega$, $\omega^2$, $-1$ & $2$ & 30. \\
\hline
35. & $A_3^2$ & $4$ & $1$, $i$, $-1$, $-i$, $i$, $-1$, $-i$ & $2$ & 28. \\
\hline
36. & $A_4 \times A_2$ & $15$ & $1$, $\xi_5$, $\xi_5^2$, $\xi_5^3$, $\xi_5^4$, $\omega$, $\omega^2$ & $2$ & 59. \\
\hline
37. & $A_5 \times A_1$ & $6$ & $1$, $-\omega^2$, $\omega$, $-1$, $\omega^2$, $-\omega$, $-1$ & $2$ & 25. \\
\hline
38. & $A_5 \times A_1$ & $6$ & $1$, $-\omega^2$, $\omega$, $-1$, $\omega^2$, $-\omega$, $-1$ & $2$ & 26. \\
\hline
39. & $A_6$ & $7$ & $1$, $\xi_7$, $\xi_7^2$, $\xi_7^3$, $\xi_7^4$, $\xi_7^5$, $\xi_7^6$ & $2$ & 57. \\
\hline
40. & $D_4 \times A_1^2$ & $6$ & $1$, $-1$, $-\omega^2$, $-1$, $-\omega$, $-1$, $-1$ & $2$ & 7. \\
\hline
41. & $D_5 \times A_1$ & $8$ & $1$, $-1$, $\xi_8$, $\xi_8^3$, $\xi_8^5$, $\xi_8^7$, $-1$ & $2$ & 29. \\
\hline
42. & $D_5(a_1) \times A_1$ & $12$ & $1$, $i$, $-i$, $-\omega^2$, $-1$, $-\omega$, $-1$ & $2$ & 23. \\
\hline
43. & $D_6$ & $10$ & $1$, $-1$, $-\xi_5^3$, $-\xi_5^4$, $-1$, $-\xi_5$, $-\xi_5^2$ & $2$ & 24. \\
\hline
44. & $D_6(a_1)$ & $8$ & $1$, $i$, $-i$, $\xi_8$, $\xi_8^3$, $\xi_8^5$, $\xi_8^7$ & $2$ & 52. \\
\hline
45. & $D_6(a_2)$ & $6$ & $1$, $-\omega^2$, $-1$, $-\omega$, $-\omega^2$, $-1$, $-\omega$ & $2$ & 20. \\
\hline
46. & $E_6$ & $12$ & $1$, $\omega$, $\omega^2$, $-i\omega$, $-i\omega^2$, $i\omega$, $i\omega^2$ & $2$ & 58. \\
\hline
47. & $E_6(a_1)$ & $9$ & $1$, $\xi_9$, $\xi_9^2$, $\xi_9^4$, $\xi_9^5$, $\xi_9^7$, $\xi_9^8$ & $2$ & 56. \\
\hline
48. & $E_6(a_2)$ & $6$ & $1$, $\omega$, $\omega^2$, $-\omega^2$, $-\omega$, $-\omega^2$, $-\omega$ & $2$ & 51. \\
\hline
49. & $A_1^7$ & $2$ & $-1$, $-1$, $-1$, $-1$, $-1$, $-1$, $-1$ & $1$ & 1. \\
\hline
50. & $A_3^2 \times A_1$ & $2$ & $-1$, $i$, $-1$, $-i$, $i$, $-1$, $-i$ & $1$ & 17. \\
\hline
51. & $A_5 \times A_2$ & $6$ & $\omega$, $\omega^2$, $-\omega^2$, $\omega$, $-1$, $\omega^2$, $-\omega$ & $1$ & 48. \\
\hline
52. & $A_7$ & $8$ & $\xi_8$, $i$, $\xi_8^3$, $-1$, $\xi_8^5$, $-i$, $\xi_8^7$ & $1$ & 44. \\
\hline
53. & $D_4 \times A_1^3$ & $6$ & $-1$, $-1$, $-1$, $-1$, $-\omega^2$, $-1$, $-\omega$ & $1$ & 4. \\
\hline
54. & $D_6 \times A_1$ & $10$ & $-1$, $-1$, $-1$, $-\xi_5^3$, $-\xi_5^4$, $-\xi_5$, $-\xi_5^2$ & $1$ & 15. \\
\hline
55. & $D_6(a_2) \times A_1$ & $6$ & $-1$, $-\omega^2$, $-1$, $-\omega$, $-\omega^2$, $-1$, $-\omega$ & $1$ & 12. \\
\hline
56. & $E_7$ & $18$ & $-1$, $-\xi_9^5$, $-\xi_9^7$, $-\xi_9^8$, $-\xi_9$, $-\xi_8^2$, $-\xi_9^4$ & $1$ & 47. \\
\hline
57. & $E_7(a_1)$ & $14$ & $-\xi_7^4$, $-\xi_7^5$, $-\xi_7^6$, $-1$, $-\xi_7$, $-\xi_7^2$, $-\xi_7^3$ & $1$ & 39. \\
\hline
58. & $E_7(a_2)$ & $12$ & $-\omega^2$, $-1$, $-\omega$, $-i\omega$, $-i\omega^2$, $i\omega$, $i\omega^2$ & $1$ & 46. \\
\hline
59. & $E_7(a_3)$ & $30$ & $-\omega^2$, $-1$, $-\omega$, $-\xi_5^3$, $-\xi_5^4$, $-\xi_5$, $-\xi_5^2$ & $1$ & 36. \\
\hline
60. & $E_7(a_4)$ & $6$ & $-\omega^2$, $-1$, $-\omega$, $-\omega^2$, $-\omega$, $-\omega^2$, $-\omega$ & $1$ & 32. \\
\hline
\caption[]{\label{table1} Conjugacy classes of elements in $W(E_7)$}
\end{longtable}
\end{center}

\bibliographystyle{alpha}

\end{document}